\numberwithin{equation}{section}
\newtheorem{theorem}{Theorem}[section]
\newtheorem{lemma}[theorem]{Lemma}
\newtheorem{corollary}[theorem]{Corollary}
\newtheorem{proposition}[theorem]{Proposition}
\newtheorem{sublemma}[theorem]{Sublemma}
\theoremstyle{definition}
\newtheorem{definition}[theorem]{Definition}
\newtheorem{example}[theorem]{Example}
\newtheorem{remark}[theorem]{Remark}
\newtheorem{question}[theorem]{Question}
\newcommand{\B}{\mathbb{B}}
\newcommand{\C}{\mathbb{C}}
\newcommand{\Z}{\mathbb{Z}}
\renewcommand{\P}{\mathbb{P}}
\newcommand{\R}{\mathbb{R}}
\newcommand{\T}{\mathbb{T}}
\newcommand{\cA}{\mathcal{A}}
\newcommand{\cC}{\mathcal{C}}
\newcommand{\cD}{\mathcal{D}}
\newcommand{\cL}{\mathcal{L}}
\newcommand{\cO}{\mathcal{O}}
\newcommand\wt{\widetilde}
\newcommand\hra{\hookrightarrow}
\newcommand{\nad}[2]{\genfrac{}{}{0pt}{}{#1}{#2}}
\def\ss{\Subset}
\def\di{\partial}
\def\dibar{\bar\partial}
\def\bs{\backslash}
\def\e{\epsilon}
\def\l{\lambda}
\def\d{\delta}
\begin{document}
\title[Strongly pseudoconvex domains as subvarieties]
{Strongly pseudoconvex domains as subvarieties \\ of complex manifolds}
\author{Barbara Drinovec Drnov\v sek \& Franc Forstneri\v c}
\address{Faculty of Mathematics and Physics, University of Ljubljana, 
and Institute of Mathematics, Physics and Mechanics, Jadranska 19, 
1000 Ljubljana, Slovenia}
\email{barbara.drinovec@fmf.uni-lj.si}
\email{franc.forstneric@fmf.uni-lj.si}
\thanks{Research supported by grants P1-0291 and J1-2043-0101, Republic of Slovenia.}

%
%    General info
%

\subjclass[2000]{Primary: 32C25, 32E10, 32F10, 32H02; 
Secondary 14J99, 32M10, 53C55}   
\date{July 9, 2009} 
\keywords{Complex manifolds, Stein manifolds, analytic subvarieties, 
holomorphic map\-pings, Levi form, strongly pseudoconvex domains, $q$-convexity}

%
%
%  ABSTRACT
%
%
\begin{abstract}
In this paper we obtain existence and approximation results 
for closed complex subvarieties that are normalized by strongly 
pseudoconvex Stein domains.  Our sufficient condition for 
the existence of such subvarieties 
in a complex manifold $X$ is expressed in terms of 
the Morse indices and the number of positive Levi eigenvalues 
of an exhaustion function on $X$ (Theorem \ref{Main1}). 
Examples show that our conditions cannot be weakened in general. 
We obtain optimal results for subvarieties of this type 
in complements of compact complex submanifolds with 
Griffiths positive normal bundle (Section \ref{subvariety-complements}); 
in the projective case these generalize classical theorems of 
Remmert, Bishop and Narasimhan concerning proper holomorphic maps 
and embeddings to $\C^n =\P^n\bs \P^{n-1}$.
\end{abstract}
\maketitle

{\small \rightline{\em  Dedicated to Edgar Lee Stout}}

%
%
%  INTRODUCTION
%
%
\section{Introduction}
An interesting and difficult problem in analytic geometry is to describe
the closed complex subvarieties of a given complex 
(or algebraic) manifold $X$. The set of all compact subvarieties  
-- the {\em Douady space} $\cD(X)$ and its close relative, the
{\em cycle space} $\cC(X)$ (the {\em Chow variety} in the 
quasi-projective setting) -- is itself a finite dimensional 
complex analytic space (see \cite{Barlet,Campana,Douady}).
Noncompact subvarieties are in many aspects harder to deal with,
and consequently not as well understood. 

In the present paper we continue the investigation,
begun in \cite{BDF1}, of the existence and plenitude
of subvarieties that arise as proper holomorphic images 
of strongly pseudoconvex Stein domains.
In \cite{BDF1} we analysed the one dimensional case -- 
complex curves normalized by bordered Riemann surfaces.
Here we study  higher dimensional subvarieties of this type 
and obtain optimal results in terms of the Levi geometry and 
the Morse indices of an exhaustion function on the ambient manifold.

Let $X$ be a complex manifold with the complex structure operator 
$J\in \mathrm{End}_\R TX$, $J^2=-I$. 
The {\em Levi form} of a $\cC^2$-function $\rho\colon X\to \R$ is 
\[
	\cL_\rho(x;v)= \frac{1}{4} \langle dd^c\rho, v\wedge Jv\rangle,
	\quad x\in X, \ v\in T_x X,
\]
where $d^c=-J^* \circ d= \mathrm{i}(\dibar -\di)$ is the conjugate differential
defined by $\langle d^c\rho,v\rangle = -\langle d\rho, Jv\rangle$.
We have $dd^c=2\,\mathrm{i} \di\dibar$. 
Choosing local holomorphic coordinates 
$z=(z_1,\ldots,z_n)$ near a point $x\in X$ and writing 
$v=\frac{1}{2}(\eta +\bar \eta)$,
where $\eta = \sum_{j=1}^n \eta_j\frac{\di}{\di z_j}|_x \in T^{1,0}_x X$,
we have
\[
	\cL_\rho(x;v) = 
	\langle \di\dibar \rho(x), \eta \wedge \bar \eta \rangle
	= \sum_{j,k=1}^n \frac{\di^2 \rho(x)}{\di z_j\di \bar z_k} \,\eta_j\,\bar \eta_k.
\]

Our main result is the following.

%
%
%  MAIN THEOREM
%
%
\begin{theorem}
\label{Main1}
Assume that $X$ is an $n$-dimensional complex manifold,
$\Omega$ is an open subset of $X$, 
$\rho\colon \Omega\to (0,+\infty)$ is a smooth Morse function 
whose Levi form has at least $r$ positive eigenvalues at every point 
of $\Omega$ for some $r\le n$, 
and for any pair of real numbers $0<c_1<c_2$ the set 
\[
	\Omega_{c_1,c_2}= \{x\in\Omega\colon c_1\le \rho(x)\le c_2\}
\]
is compact. 
Let $D$ be a smoothly bounded, relatively compact, 
strongly pseudoconvex domain in a 
Stein manifold $S$, and let $f_0\colon\bar D\to X$ be a continuous map 
that is holomorphic in $D$ and satisfies $f_0(bD)\subset \Omega$. If
\begin{itemize}
\item[(a)] $r\ge 2d$, where $d=\dim_{\C} S$, 
\end{itemize}
or if
\begin{itemize}
\item[(b)] $r\ge d+1$ and $\rho$ has no critical points of index
$>2(n-d)$ in $\Omega$,
\end{itemize}
then $f_0$ can be approximated, uniformly on compacts in $D$, 
by holomorphic maps $f\colon D\to X$ such that
$f(z)\in \Omega$ for every $z\in D$ sufficiently close to $bD$,
and 
\[
	\lim_{z\to bD} \rho(f(z))=+\infty.
\]
Moreover, given an integer $k\in\Z_+$, $f$ can be chosen to agree with 
$f_0$ to order $k$ at each point in a given finite set $\sigma\subset D$. 
\end{theorem}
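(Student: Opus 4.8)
\emph{Overall strategy.} I would obtain $f$ as a locally uniform limit of holomorphic maps $f_j\colon \overline{D_j}\to X$ defined on an exhaustion $D_1\Subset D_2\Subset\cdots\Subset D$, $\bigcup_j D_j=D$, by smoothly bounded strongly pseudoconvex domains; this is the higher--dimensional counterpart of the ``pushing to infinity'' scheme used for curves in \cite{BDF1}. Since $\Omega$ is open and $f_0(bD)\subset\Omega$, one first replaces $\overline D$ by such a domain $D_1$ with $bD_1$ so close to $bD$ that $f_0$ is holomorphic on $\overline{D_1}$ and $f_0(bD_1)\subset\Omega$; after an Oka--Weil approximation (and a small modification matching the $k$-jet of $f_0$ along $\sigma$, using holomorphic functions on $S$ cutting out $\sigma$) I take this as $f_1$. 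The induction then builds $f_{j+1}$ from $f_j$: given $f_j(bD_j)\subset\{\rho>c_j\}\cap\Omega$ and $f_j(\overline{D_j}\setminus D_{j-1})\subset\{\rho>c_{j-1}-1\}$, I construct $f_{j+1}$ on $\overline{D_{j+1}}$ that (i) approximates $f_j$ as closely as desired on $\overline{D_{j-1}}$, (ii) agrees with $f_0$ to order $k$ along $\sigma$, (iii) maps $bD_{j+1}$ into $\{\rho>c_{j+1}\}\cap\Omega$ for a prescribed $c_{j+1}>c_j+2$, and (iv) maps $\overline{D_{j+1}}\setminus D_j$ into $\{\rho>c_j-1\}\cap\Omega$. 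If the approximations in (i) are fast enough, $f=\lim_j f_j$ exists and is holomorphic on $D$, interpolates $f_0$ to order $k$ on $\sigma$, lies in $\Omega$ near $bD$, and satisfies $\rho(f(z))\to+\infty$ since $\rho\circ f\ge c_{j-1}-2$ on $D\setminus D_j$ with $c_j\to\infty$; compactness of the sets $\Omega_{c_1,c_2}$ keeps the images in controlled compacta at each stage.

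\emph{The inductive step.} Fix $j$, write $G=\overline{D_j}$, pick a larger strongly pseudoconvex $E$ with $\overline G\subset E\Subset D$, and set $c=c_j$. I would carry out the step by finitely many substeps, in each of which $\min\rho\circ f$ on the current outer boundary is raised by a fixed amount $\delta>0$ depending only on local data, while $\rho\circ f$ drops by at most a prescribed tiny amount on the already--controlled part; after finitely many substeps $\min_{bE}\rho\circ f$ exceeds $c_{j+1}$ and the domain has been enlarged past $\overline{D_{j+1}}$. The enlargement is the standard Cartan--pair induction (attaching finitely many small convex bumps to $G$), and the real content is to modify $f$ simultaneously over those bumps meeting $\{\rho\le c+\delta\}$. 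A substep is \emph{noncritical} if $\rho$ has no critical value in $[c,c+\delta]$, and \emph{critical} if it contains exactly one, coming from a single Morse critical point $p$.

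\emph{Noncritical substep.} Cover the relevant compact piece of $\{c\le\rho\le c+\delta\}$ by holomorphic coordinate balls $U_i\subset\Omega$ in which, after a complex--linear change of coordinates, $\{\rho>c-\epsilon\}$ contains a set that is convex along the $r$ directions on which $\cL_\rho$ is positive; this is exactly where the hypothesis of at least $r$ positive Levi eigenvalues enters. On $U_i$ one has a dominating spray (a composition of flows of coordinate vector fields), which lets one modify $f$ on a bump $B_i$ attached to the current domain so as to push the relevant boundary piece into $\{\rho>c+\delta\}\cap\Omega$ while keeping $\rho\circ f>c-\epsilon$ on $B_i$; consecutive modifications are glued by the splitting/gluing lemma for sprays over Cartan pairs, which also lets the correction be taken tangent to $\sigma$ to order $k$ (multiply it by a high power of a function cutting out $\sigma$). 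The number of positive eigenvalues this requires is precisely $r\ge 2d$ in case (a) and $r\ge d+1$ in case (b), which is what makes the bumps available for the $2d$-real--dimensional image of the $d$-dimensional $B_i$.

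\emph{Critical substep and the main obstacle.} At a Morse critical point $p$ of index $\lambda$, hypothesis (b) gives $\lambda\le 2(n-d)$, hence $2d+\lambda\le 2n$; Morse theory identifies the passage from $\{\rho>c+\delta\}$ to $\{\rho>c-\delta\}$ near $p$ with attaching a cell of complementary dimension, and for dimension reasons the $d$-dimensional family $f$ (with its controlled boundary part held fixed) can be isotoped by a $C^0$-small homotopy inside $\{\rho>c-\delta\}\cap\Omega$ to one mapping the new boundary piece into $\{\rho>c+\delta\}$. I would then replace this topological move by a holomorphic one using Mergelyan/Oka approximation on the relevant compact, followed by noncritical substeps on either side. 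Under hypothesis (a) the stronger positivity $r\ge 2d$ instead lets one push the image directly past $p$ by local convex bumps, with no index restriction — which is why (a) and (b) are genuinely different. I expect the crux of the whole argument to be this local pushing lemma: producing, in a coordinate patch, a bona fide convex bump of the merely $q$-convex region $\{\rho>c\}$ relative to which the $d$-dimensional image can be moved outward, keeping the spray gluing compatible with the uniform approximation on the controlled region and with the $k$-jet condition at $\sigma$, and, under (b), the accompanying detour past Morse critical points. This is also precisely where the numerology is sharp and where the claimed optimality examples intervene.
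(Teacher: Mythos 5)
Your high-level scheme (induction raising $\min_{bD}\rho\circ f$, globalizing local corrections by gluing sprays over a Cartan pair, with a separate treatment of critical levels) matches the paper's architecture, but two of the three core steps are described in ways that, as written, do not work.

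First, the noncritical lifting step is the technical heart of the proof, and what you describe — a ``dominating spray (a composition of flows of coordinate vector fields)'' pushing the image outward across a ``convex bump'' — does not capture the actual difficulty. For $d\ge 2$ there is no elementary way to move a holomorphically parametrized $d$-dimensional boundary piece outward along a merely $q$-convex hypersurface by flowing. The paper's Lemma~\ref{LemmaDorL} is a reformulation of Dor's Lemma~1 from \cite{Dor1}: it produces a holomorphic correction $g$ with the quantitative increase
\[
\rho(g(z)) - \rho(f(z)) > \mu_0\,|g(z)-f(z)|^2 - \e, \qquad z\in\bar D_1,
\]
together with lower and upper bounds on $|g-f|$ along $bD$. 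The mechanism is a family of holomorphic peak functions on $bD$ that peak along Legendrian (complex-tangential) submanifolds of real dimension $d-1$, in the spirit of Hakim--Sibony and L\o w. This is where the positivity count $r\ge d+1$ actually enters and is used; ``after a complex-linear change of coordinates $\{\rho>c-\epsilon\}$ is convex along the positive directions'' is not a substitute, and the proposal contains no substitute for Dor's construction.

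Second, your critical step is not a valid argument. You propose to use Morse theory and the index bound to produce a $C^0$-small \emph{topological} isotopy of the boundary image past the critical level and then ``replace this topological move by a holomorphic one using Mergelyan/Oka approximation.'' There is no Mergelyan/Oka theorem that converts a continuous homotopy of a $d$-dimensional boundary family into a holomorphic one in a general $q$-convex shell; indeed the whole difficulty of the theorem is precisely that such approximation is unavailable and must be replaced by the analytic lifting machinery. What the paper actually does is: (i) use the index hypothesis $\lambda\le 2(n-d)$ and Sard's lemma to choose a parameter value so that the boundary image avoids the local stable manifold $E$ of the critical point $p$ (here the dimension count $\dim_\R E\le 2n-2d$ versus $\dim_\R bD=2d-1$ is used); (ii) build, via Lemma~\ref{crossing}, a new $q$-convex function $\tau$ that has no critical values in $(0,3c_0)$, whose sublevel sets nest between those of $\rho$ and whose zero-sublevel set absorbs $E$; and (iii) run the \emph{noncritical} lifting with $\tau$ in place of $\rho$ to cross the critical $\rho$-level. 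Your proposal omits both the avoidance of $E$ and the auxiliary noncritical function, and the Oka/Mergelyan replacement you suggest would fail.

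A smaller point: you assert that under hypothesis (a) ($r\ge 2d$) one pushes past critical points ``with no index restriction,'' whereas under (b) one takes a different route, ``which is why (a) and (b) are genuinely different.'' In fact (a) implies (b): for a function with $\ge r$ positive Levi eigenvalues every Morse index is $\le 2n-r$, so $r\ge 2d$ forces all indices $\le 2(n-d)$. The paper runs a single proof under (b).

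Finally, a minor structural difference: you exhaust $D$ by subdomains $D_j$ and define $f_j$ only on $\overline{D_j}$, while the paper keeps every $f_j$ defined and continuous on all of $\bar D$ (holomorphic in $D$). Either bookkeeping could in principle be made to work, but the paper's choice is what makes the spray gluing over a fixed Cartan pair $(D_0,D_1)$ with $D_0\cup D_1=D$ clean, and it is the version for which Proposition~\ref{gluing-sprays} is stated.
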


The most interesting case is when $X$ is noncompact, $\Omega$ 
is a union of connected components of $X\bs K$ for some compact subset $K$
of $X$, and $\rho\to +\infty$ along the noncompact ends of $\Omega$. 
Theorem  \ref{Main1} then furnishes
{\em proper holomorphic maps} $f\colon D\to X$ that approximate 
a given map $f_0$ uniformly on compacts in $D$.
A typical situation is $\Omega=\{\rho>0\}$ where $\rho\colon X\to \R$
is an exhaustion function satisfying the stated properties
on $\Omega$.

A $\cC^2$-function $\rho$ on an $n$-dimensional complex manifold $X$
whose Levi form has at least $r$ positive eigenvalues
at every point in an open set $\Omega\subset X$ 
is said to be {\em $(n-r+1)$-convex on $\Omega$} (see \cite{Grauert2}). 
All Morse indices of such function are $\le r+2(n-r)=2n-r$. 
(See Lemma \ref{lemma-normal} below for a quadratic normal form of such a function
at a critical point. The Morse condition can always be achieved by a 
small perturbation of $\rho$ in the fine $\cC^2$-topology, keeping the above 
Levi convexity property of $\rho$.)
Hence condition (a) in Theorem \ref{Main1} implies that all 
Morse indices of $\rho$ in $\Omega$ are $\le 2(n-d)$, and therefore condition (b) 
holds as well. When $d=1$ (i.e., $D$ is a bordered Riemann surface),
conditions (a) and (b) are both equivalent to $r\ge 2$, and in this case 
Theorem \ref{Main1} is essentially the same as \cite[Theorem 1.1]{BDF1}. 
When $d>1$, condition (b) is weaker than (a).

An $n$-dimensional complex manifold $X$ is 
said to be {\em $q$-convex} if it admits an exhaustion
function $\rho\colon X\to \R$ that is $q$-convex on 
$\{\rho>c\}$ for some $c\in\R$;
$X$ is {\em $q$-complete} if $\rho$ can be chosen 
$q$-convex on all of $X$ (see \cite{AG,Grauert2}). 
By approximation we can assume that $\rho$ is 
$\cC^\infty$-smooth. We have the following 
corollary of Theorem \ref{Main1} (a).

%
%
%  COROLLARY 1.2
%
%
\begin{corollary}
\label{q-complete}
Let $X$ be an $n$-dimensional complex manifold,
and let $D\Subset S$ be a $d$-dimensional strongly pseudoconvex domain as in 
Theorem \ref{Main1}. Assume that $2d \le n$ and 
$q\in \{1,\ldots, n-2d+1\}$. Then the following hold:

(a) If $X$ is $q$-convex then there exists a proper holomorphic
map $D \to X$. 

(b) If $X$ is $q$-complete then every continuous map  $\bar D\to X$ that 
is holomorphic in $D$ can be approximated, uniformly on compacts 
in $D$, by proper holomorphic maps $D\to X$.
\end{corollary}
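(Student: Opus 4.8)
Part (a) is the case of (b) in which $f_0$ is any fixed continuous map $\bar D\to X$ holomorphic in $D$; such maps always exist (e.g. a constant), so it suffices to prove (b). So let $q\in\{1,\ldots,n-2d+1\}$ and suppose $X$ is $q$-complete, meaning $X$ carries a smooth exhaustion function $\rho\colon X\to\R$ that is $q$-convex on all of $X$, i.e. whose Levi form has at least $r:=n-q+1$ positive eigenvalues at every point of $X$. From $q\le n-2d+1$ we get $r=n-q+1\ge 2d$, so hypothesis (a) of Theorem \ref{Main1} is satisfied with $\Omega=X$.

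**Next I would arrange the positivity and compactness requirements.** Theorem \ref{Main1} wants $\rho>0$ on $\Omega$ and wants the sublevel-superlevel bands $\Omega_{c_1,c_2}$ to be compact; the latter holds automatically because $\rho$ is an exhaustion function on $X$ and each band is closed. For positivity, simply replace $\rho$ by $\rho+C$ for a large constant $C$, or compose with a real function; adding a constant changes neither the Levi form nor the critical points, so $\rho+C$ is still $q$-convex on $X$ with the same Morse data. After a further arbitrarily small perturbation in the fine $\cC^2$-topology we may assume $\rho$ is a Morse function without losing the property that its Levi form has at least $r$ positive eigenvalues everywhere (this is exactly the remark following Theorem \ref{Main1}). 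One should also note $f_0(bD)\subset\Omega=X$ trivially.

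**Then apply the theorem.** With $X$, $\Omega=X$, this perturbed $\rho$, the domain $D\Subset S$, the map $f_0$, and $r\ge 2d$, Theorem \ref{Main1}(a) produces holomorphic maps $f\colon D\to X$, approximating $f_0$ uniformly on compacts in $D$, with $\lim_{z\to bD}\rho(f(z))=+\infty$. It remains to observe that such an $f$ is \emph{proper}: if $L\subset X$ is compact then $\rho$ is bounded on $L$, say $\rho\le M$, and the condition $\rho(f(z))\to+\infty$ as $z\to bD$ forces $f(z)\notin L$ for all $z$ in a neighborhood of $bD$ in $D$; hence $f^{-1}(L)$ is a closed subset of $D$ contained in a compact subset of $D$, so it is compact. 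This gives (b), and (a) follows by specializing $f_0$. (The optional interpolation clause of Theorem \ref{Main1} may be carried along verbatim if one wishes to record it in the corollary, but it is not needed for the stated conclusion.)

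**The only real subtlety — and it is minor — is bookkeeping about the open set $\Omega$ versus $X$.** Taking $\Omega=X$ trivializes the compactness hypothesis on the bands and the boundary condition $f_0(bD)\subset\Omega$, so there is no obstacle there; the content is entirely in checking the numerical inequality $n-q+1\ge 2d$, which is immediate from the hypothesis $q\le n-2d+1$ (together with $2d\le n$, which guarantees the range of admissible $q$ is nonempty and that $q\ge 1$ is consistent). Everything else is a direct quotation of Theorem \ref{Main1}(a) plus the one-line properness argument above.
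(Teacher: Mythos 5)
Your argument for part (b) is correct and is exactly the deduction the paper has in mind: take $\Omega=X$, observe $r=n-q+1\ge 2d$ from $q\le n-2d+1$, shift and perturb $\rho$ to be a positive Morse exhaustion that is still $q$-convex, and convert $\lim_{z\to bD}\rho(f(z))=+\infty$ into properness. The normalizations (adding a constant to make $\rho>0$, the Morse perturbation, the compactness of the bands) are handled fine.

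The one genuine slip is the opening reduction. You assert that (a) follows from (b) by taking $f_0$ to be a constant map, but (a) and (b) have \emph{different hypotheses}: (a) assumes only that $X$ is $q$-convex, while (b) assumes $X$ is $q$-complete, and $q$-convexity is strictly weaker. Under $q$-convexity alone, the exhaustion $\rho$ is $q$-convex only on a set $\Omega=\{\rho>c\}$; it need not have any Levi positivity on $\{\rho\le c\}$, and (b) as you have stated it cannot be invoked. The correct handling of (a) is a slight variant of the same idea rather than a special case of (b): set $\Omega=\{\rho>c\}$ (with $\rho$ shifted so that $\rho>0$ there, and perturbed to Morse while keeping $q$-convexity), choose $f_0$ to be a constant map into $\Omega$ (so that $f_0(bD)\subset\Omega$, which for a general $f_0$ could fail), and then apply Theorem \ref{Main1}(a) and the same properness argument. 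Conversely, the full approximation claim in (b) really does require $q$-completeness, since only then can one take $\Omega=X$ and accommodate an arbitrary initial map $f_0$ without the constraint $f_0(bD)\subset\Omega$ becoming nontrivial. The rest of your write-up, including the arithmetic $r\ge 2d$ and the properness argument, is sound.
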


Theorem \ref{Main1} and Corollary \ref{q-complete} 
apply to any Stein manifold $D$ 
with compact closure $\bar D$ and smooth strongly 
pseudoconvex boundary $bD$. Indeed, such $D$ is equivalent to a smoothly 
bounded strongly pseudoconvex domain in a Stein manifold, and even in an 
affine algebraic manifold, by a biholomorphism extending smoothly to 
the boundary (see \cite{Catlin,Heunemann3,Stout}).
For the general theory of Stein manifolds we refer to \cite{GR,Ho}.

The image $V=f(D)$ of a proper holomorphic map $f\colon D\to X$ is 
a closed complex subvariety of $X$ (see Remmert \cite{Remmert2}). 
If the generic fiber of $f$ is a single point of $D$ (which is
easily ensured by a suitable choice of the initial map $f_0$),
then $f\colon D\to V$ is a normalization map of the subvariety $V$.

%
%
%  OUTLINE OF PROOF OF MAIN THEOREM
%
%
Our proof of Theorem \ref{Main1} (see \S\ref{proof})
involves three main analytic techniques.
When $d=\dim D=1$, $D$ is a bordered Riemann surface, 
and in this case Theorem \ref{Main1} essentially
coincides with \cite[Theorem 1.1]{BDF1}. 
The higher dimensional case requires a considerably 
more delicate technique for lifting the boundary of $D$ 
(considered as a subset of $X$ via a map 
$\bar D \to X$) to higher levels of $\rho$.
The main local lifting lemma (see Lemma \ref{MainLemma}) 
employs special holomorphic peak functions that reach their maximum
along certain Legendrian (complex tangential) submanifolds of maximal
real dimension $d-1$ in $bD$. Its proof mainly relies on the 
work of Dor \cite{Dor1} (see also Hakim \cite{Hakim} and Stens\o nes \cite{Sten}).
The idea of using such peak functions goes back to the construction of inner
functions by Hakim and Sibony (see \cite{HakimSibony}) and 
L\o w (see \cite{Low1}); these undoubtedly belong among 
the most intricate and beautiful results in complex analysis.

Each local modification is patched with the previous global map
$\bar D\to X$ by the method of {\em gluing holomorphic sprays} 
developed in \cite{BDF1} (see \S\ref{spray} below).
This technique effectively replaces the $\dibar$-equation
which cannot be used directly in a nonlinear setting.
However, the lemma on gluing of sprays from \cite{BDF1}
depends on the existence of a bounded linear solution operator for
the $\dibar$-equation at the level of $(0,1)$-forms.

To avoid the critical points of $\rho$ (where the 
estimates in the lifting process cannot be controlled) 
we adapt a method that was developed 
(for strongly plurisubharmonic functions) in \cite{ACTA}. 
We first ensure that the boundary of $D$ (considered as a subset of $X$) 
avoids the stable manifold of any critical point of $\rho$; 
this is possible by general position, provided that all
Morse indices of $\rho$ are $\le 2(n-d)$. 
In order to lift $bD$ over the critical level at a critical point $p\in \Omega$ 
we construct a new noncritical function $\tau$, 
with the same Levi convexity properties as $\rho$, 
such that $\{\tau \le 0\}$ contains $\{\rho \le c\}$ for some $c<\rho(p)$,
and it also contains the local stable manifold of $p$ for the gradient flow of $\rho$
(see Lemma \ref{crossing}). Using the lifting procedure 
with $\tau$ we can push $bD$ into $\{\rho>\rho(p)\}$,
and the construction may proceed.

%
%
%  FURTHER COROLLARIES
%
%
In the remainder of this introduction we discuss 
further corollaries and examples related to Theorem \ref{Main1}.

%
%
%   EXAMPLE
%
%
\begin{example} 
\label{Counterex}
{\em Condition (b) in Theorem \ref{Main1}
cannot be weakened for any pair of dimensions $1\le d<n$}.
Given integers $1\le d<n$, set $m=n-d+1\in \{2,\ldots,n\}$.
Let $\T^m=\C^m/\varGamma$ be a complex torus of dimension $m$ 
that is not projective-algebraic, and that furthermore 
does not contain any closed complex curves. (Most tori of dimension 
$>1$ are such; for a specific example with $m=2$ see 
\cite[p.\ 222]{Wells}.) Set
\begin{equation}
\label{exampleX}
	X =\T^m\bs \{p\} \times \C^{n-m}= \T^m\bs \{p\} \times \C^{d-1}.
\end{equation}
Choose an exhaustion function $\tau \colon \T^m\bs \{p\}\to \R$ 
that equals $|y-y(p)|^{-2}$ in some local holomorphic coordinates 
$y$ on $\T^m$ near $p$.  The exhaustion function $\rho(y,w)= \tau(y)+ |w|^2$ on $X$ 
has no critical points in a deleted neighborhood of $p$, 
and its Levi form has $1+n-m=d$ positive eigenvalues near $\{p\}\times\C^{d-1}$.
Thus $X$ satisfies condition (b) in Theorem  \ref{Main1} for 
domains $D$ of dimension $<d$, but not for domains of dimension~$\ge d$.

We claim that no $d$-dimensional Stein manifold $D$ admits
a proper holomorphic map to the manifold $X$ (\ref{exampleX}).
Indeed, suppose that $f\colon D\to X$ is such a map.
Let $\pi\colon X\to \C^{d-1}$ denote the projection $\pi(y,w)=w$
onto the second factor. Consider the holomorphic map
$\pi\circ f\colon D\to\C^{d-1}$.
By dimension reasons there exists a point $w\in \C^{d-1}$ 
for which the fiber $\Sigma=\{z\in D\colon \pi(f(z))=w\}$ 
is a subvariety of positive dimension in $D$.
Since $D$ is Stein, $\Sigma$ contains a one dimensional subvariety $C$, 
and $f(C)$ is then a closed complex curve in $\T^m\bs \{p\}\times \{w\}$.
Since a point is a removable singularity for positive dimensional 
analytic subvarieties \cite{RS}, it follows that $\overline{f(C)}$
is a nontrivial closed complex curve in $\T^m\times \{w\}$, 
a contradiction.

Interestingly enough, the manifold $X$ (\ref{exampleX}) admits plenty
of {\em nonproper} holomorphic maps $S\to X$ from any Stein manifold
$S$.  Indeed, $X$ enjoys the following {\em Oka property} 
(see \cite[Corollary 1.5 (ii)]{ANN}): 

{\em Any continuous map $f_0\colon S\to X$ from a Stein manifold $S$
is homotopic to a holomorphic map $f\colon S\to X$;
if in addition $f_0$ is holomorphic in a neighborhood of a 
compact $\cO(S)$-convex subset $K\subset S$, then $f$ can be chosen 
to approximate $f_0$ as close as desired uniformly on $K$.}

This shows that properness of a holomorphic map
$f\colon D\to X$ is a very restrictive condition 
irrespectively of the codimension $\dim X - \dim D$.
\qed \end{example}

%
%
%   COMPLEMENTS OF SUBMANIFOLDS
%
%
Theorem \ref{Main1} gives interesting information on the
existence of proper holomorphic maps of strongly pseudoconvex
domains into complements of certain complex submanifolds.
For example, if $A$ is a compact complex submanifold of 
complex codimension $q$ in a projective space $X=\P^n$, 
then $\Omega =\P^n\bs A$ admits a $q$-convex exhaustion function without 
critical points close to $A$ (Barth \cite{Barth}; 
the manifold  $\P^n\bs \P^{n-q}$ is even $q$-complete.)
Thus condition (b) in Theorem \ref{Main1} holds when $r=n-q+1 > \dim D$
or, equivalently, $\dim D\le \dim A$. This gives the first part
of the following corollary; for the second part we apply
a result of Schneider (see \cite[Corollary 2]{Schneider}).

\begin{corollary}
\label{proj-minus}
If $A$ is a compact complex submanifold of $\P^n$ 
then every smoothly bounded, relatively compact, 
strongly pseudoconvex Stein domain $D$ 
of dimension $\dim D\le \dim A$ 
admits a proper holomorphic map $D\to \P^n\bs A$.
In particular, if $\dim D<n$ then $D$ admits a proper holomorphic 
map into the complement $\P^n\bs A$ of any nonsingular complex
hypersurface $A$ in $\P^n$.
The analogous conclusion holds for maps $D\to X\bs A$, where $A$ is a 
compact complex submanifold of a complex manifold $X$ 
with Griffiths positive normal bundle $N_{A|X}$.
\end{corollary}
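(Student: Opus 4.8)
The plan is to deduce the corollary from Theorem~\ref{Main1}(b), applied with ambient manifold the \emph{complement} $Y:=\P^n\setminus A$ (resp.\ $Y:=X\setminus A$) rather than $\P^n$ (resp.\ $X$); this way the map produced automatically omits $A$. Write $q=\dim X-\dim A$ for the codimension of $A$. By Barth~\cite{Barth}, $\P^n\setminus A$ carries a smooth exhaustion function $\rho$ that is $q$-convex and has no critical points on $\{\rho>c\}$ for some $c\in\R$; replacing $\rho$ by $\rho-c$, we may assume $c=0$, so that $\rho$ is $q$-convex and noncritical on $\Omega:=\{\rho>0\}$ and $\rho\colon\Omega\to(0,+\infty)$. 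Since $\rho$ exhausts $Y$, it tends to $+\infty$ towards $A$, so $\Omega$ is nonempty and contains a deleted neighbourhood of $A$. For an arbitrary $X$ and a submanifold $A$ with $N_{A|X}$ Griffiths positive, the same exhaustion function is furnished by Schneider~\cite[Corollary 2]{Schneider}, and the rest of the argument is identical.

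Next I would check that the data $(Y,\Omega,\rho)$ meet the hypotheses of Theorem~\ref{Main1}(b) for any smoothly bounded, relatively compact, strongly pseudoconvex Stein domain $D$ with $d:=\dim D\le\dim A$. The Levi form of $\rho$ has at least $r:=n-q+1=\dim A+1$ positive eigenvalues at every point of $\Omega$, so $d\le\dim A$ gives $r\ge d+1$, which is the numerical requirement in~(b); moreover $\rho$ has no critical points in $\Omega$, hence it is (vacuously) Morse there and the index hypothesis in~(b) is empty. For $0<c_1<c_2$ the set $\Omega_{c_1,c_2}$ equals $\rho^{-1}([c_1,c_2])$, a closed subset of the compact sublevel set $\{\rho\le c_2\}$, hence compact. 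Taking $f_0\colon\bar D\to Y$ to be a constant map with value a point of $\Omega$ (continuous, holomorphic on $D$, and satisfying $f_0(bD)\subset\Omega$), Theorem~\ref{Main1} provides a holomorphic map $f\colon D\to Y$ with $\lim_{z\to bD}\rho(f(z))=+\infty$. As $\rho$ exhausts $Y$, this makes $f$ proper as a map $D\to Y$: a compact $K\subset Y$ lies in $\{\rho\le c_2\}$ for some $c_2$, so $f^{-1}(K)\subset\{z\in D:\rho(f(z))\le c_2\}$, which is closed in $D$ and, because $\rho\circ f\to+\infty$ at $bD$, relatively compact in $D$; thus $f^{-1}(K)$ is compact. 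This yields the asserted proper holomorphic map $D\to\P^n\setminus A$ (resp.\ $D\to X\setminus A$). The ``in particular'' statement is the case $q=1$: a smooth hypersurface $A\subset\P^n$ has $\dim A=n-1$, so $\dim D\le\dim A$ reads $\dim D<n$, and $N_{A|\P^n}$ is positive, so the argument applies.

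I do not anticipate a real obstacle, as the substance is supplied by Theorem~\ref{Main1} together with the $q$-convexity theorems of Barth and Schneider. Two points deserve attention. First, Theorem~\ref{Main1} only guarantees that $f$ lies in $\Omega$ near $bD$ with $\rho\circ f\to+\infty$ there, so over $\P^n$ the map could still hit $A$ at interior points of $D$; using $Y=\P^n\setminus A$ as the ambient manifold removes this possibility and simultaneously turns the divergence of $\rho\circ f$ into properness into $\P^n\setminus A$. Second, one must normalize $\rho$ so that its region of $q$-convexity is precisely $\{\rho>0\}$, rather than some $\{\rho>c'\}$ with $c'>0$; this is exactly what makes every shell $\Omega_{c_1,c_2}$ with $0<c_1<c_2$ compact, as required in Theorem~\ref{Main1}.
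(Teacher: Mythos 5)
Your proof is correct and follows essentially the same route as the paper: Barth's theorem (resp.\ Schneider's in the general $X$) furnishes a $q$-convex exhaustion of the complement, noncritical near $A$, with $q=\operatorname{codim} A$, and Theorem~\ref{Main1}(b) then applies since $r=\dim A+1\ge d+1$. Your choice to run Theorem~\ref{Main1} with ambient manifold $\P^n\setminus A$ (rather than $\P^n$) is a sensible clarification that makes the passage from $\rho\circ f\to+\infty$ to properness of $f\colon D\to\P^n\setminus A$ immediate, a step the paper leaves implicit.
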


Corollary \ref{proj-minus} generalizes Bishop's theorem 
(see \cite{Bishop}) on the existence of proper holomorphic maps
$D\to \C^n=\P^n\bs \P^{n-1}$ for $n>\dim D$. 
While Bishop's theorem holds for any Stein manifold $D$
of dimension $<n$, in the general situation considered here
one must restrict to Kobayashi hyperbolic domains since 
the complement $\P^n\bs A$ of a generic hypersurface 
$A\subset\P^n$ of sufficiently high degree is hyperbolic.

The conclusion of Corollary \ref{proj-minus} fails when 
$\dim D >\dim A$. Indeed, the closure of $V=f(D)$ in $\P^n$ 
would be a closed complex subvariety of $\P^n$
by the Remmert-Stein theorem (see \cite[p.\ 354]{RS,GRemmert}),
hence $f(D) =\overline V \bs A$ would be quasi-projective algebraic
(the difference of two closed projective varieties). 
This is clearly impossible. It is easily seen that a 
proper holomorphic map $f\colon D\to X\bs A$ 
cannot extend continuously (as a map to $X$) 
to any boundary point of $D$.

Our next corollary generalizes classical results of Remmert 
\cite{Remmert1}, Bishop \cite{Bishop},
and Narasimhan \cite{Nar1} on immersions and embeddings 
of strongly pseudoconvex domains into Euclidean spaces, 
as well as results of Dor \cite{Dor0,Dor1} where the target 
manifold $X$ is a domain of holomorphy in $\C^n$.

%
%
%  HOLOMORPHIC EMBEDDINGS AND IMMERSIONS
%
%
\begin{corollary}
\label{cor1}
Assume that $D$ is a smoothly bounded, relatively compact,
strongly pseudoconvex domain in a Stein manifold $S$, 
$X$ is a Stein manifold, and $f_0\colon\bar D\to X$ is a continuous map 
that is holomorphic in $D$. 
\begin{itemize}
\item[(i)] If $\dim X\ge 2\dim D$, then $f_0$ can be approximated 
uniformly on compacts in $D$ by proper holomorphic immersions $D\to X$.
\item[(ii)] If $\dim X\ge 2\dim D +1$, then $f_0$ can be approximated 
uniformly on compacts in $D$ by proper holomorphic embeddings $D\hra X$.
\end{itemize}
\end{corollary}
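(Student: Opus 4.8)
The plan is to read properness off Theorem \ref{Main1} and to obtain the immersion (resp. embedding) property from a general-position argument interleaved with the inductive construction behind Theorem \ref{Main1}. For properness, recall that a Stein manifold $X$ of dimension $n$ admits a smooth strongly plurisubharmonic Morse exhaustion function $\rho\colon X\to(0,+\infty)$; its Levi form has all $n$ eigenvalues positive at every point. Taking $\Omega=X$, $r=n$ and $d=\dim D$, the hypothesis (a) of Theorem \ref{Main1} reads $n\ge 2d$, which is exactly the standing assumption $\dim X\ge 2\dim D$ of part (i) (and holds a fortiori under the assumption of part (ii)). Hence Theorem \ref{Main1} already produces holomorphic maps $f\colon D\to X$, approximating $f_0$ uniformly on compacts, with $\lim_{z\to bD}\rho(f(z))=+\infty$; since $\rho$ is an exhaustion, such $f$ are proper. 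It remains only to arrange, in the limit, that $f$ be an immersion (resp. an embedding).

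The elementary tool is the Thom transversality theorem for holomorphic $1$-jets, available because $X$ and $D$ are Stein: given a holomorphic map $g\colon D\to X$ and a compact $L\subset D$, if $n\ge 2d$ then $g$ can be approximated, uniformly on a neighbourhood of $L$, by holomorphic maps $D\to X$ that are immersions near $L$, and if $n\ge 2d+1$ by maps that are embeddings near $L$. Indeed, the locus of $1$-jets with rank-deficient differential is a subvariety of complex codimension $n-d+1$ in the $1$-jet bundle, and $n-d+1>d$ exactly when $n\ge 2d$, so a generic holomorphic perturbation of $g$ has $1$-jet missing this locus and is therefore an immersion near $L$; for the embedding statement one additionally makes $(z,w)\mapsto(g(z),g(w))$, on $(D\times D)\setminus\Delta$, transverse to the diagonal of $X\times X$, which (its codimension being $n>2d$) forces the preimage to be empty, and together with the immersion property this yields an injective immersion, i.e. an embedding, near $L$. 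These perturbations are manufactured inside $X$ in the usual way: embed $X\hra\C^N$ properly, take a holomorphic retraction of a tubular neighbourhood, and use a finite-dimensional dominating family built from holomorphic functions on $S$; they need only be small and defined near $L$, which suffices.

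Finally one interleaves the two constructions. The proof of Theorem \ref{Main1} yields the desired map as a uniform-on-compacts limit of holomorphic maps $f_j\colon D\to X$, each a small modification of $f_{j-1}$ over a prescribed compact $K_{j-1}$ and each pushing $bD$ (regarded in $X$) into a higher sublevel set of $\rho$; here $K_1\subset K_2\subset\cdots$ exhaust $D$. After the $j$-th lifting step, apply the transversality lemma with $L=K_j$ to make $f_j$ an immersion (resp. embedding) near $K_j$ by a perturbation chosen small relative to the tolerances of the subsequent lifting steps. Being an immersion, and being an injective immersion, on a compact set are open, stable conditions, so they survive all later modifications; the limit $f$ is then an immersion (resp. embedding) on each $K_j$, hence on $D$, while the lifting steps still give $\rho\circ f\to+\infty$ at $bD$ and $f$ still approximates $f_0$ on compacts, as required. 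I expect the main obstacle to be exactly this bookkeeping of tolerances: at each stage the size of the genericity perturbation must be small both relative to the data-dependent tolerances that govern convergence of the properness construction of Theorem \ref{Main1} and relative to the stability radius of the immersion/embedding already achieved on $K_{j-1}$, and one must also check that a small perturbation does not spoil the hypotheses (such as the position of $f_j(bD)$ relative to sublevel sets of $\rho$) needed to run the next lifting step.
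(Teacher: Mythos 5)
Your argument is correct and takes essentially the same route as the paper: reduce properness to Theorem \ref{Main1}(a) using a strictly plurisubharmonic Morse exhaustion (Stein manifolds being $1$-complete, so $r=n$ and condition (a) is $n\ge 2d$), and obtain the immersion/embedding property by inserting a general-position (jet-transversality) perturbation at each stage of the inductive lifting. The paper states this tersely; you have merely spelled out which general-position argument is meant and why the dimension thresholds $n\ge 2d$ and $n\ge 2d+1$ are exactly what the codimension counts for the singular $1$-jet locus and the double-point locus require.
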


Corollary \ref{cor1} is a consequence of Theorem \ref{Main1}
(condition (a) holds since a Stein manifold $X$ is $1$-complete), 
except for the claim that $f$ can be chosen 
an immersion (resp.\ an embedding). The latter conditions
are easily built into the construction by applying a 
general position argument at every step of the inductive process. 

Further results on holomorphic immersions and embeddings in $X=\C^n$ 
can be found in \cite{EG,FW,FIKP,Prezelj,Sch,Wo1,Wo2}; for embeddings into 
special domains such as balls and polydiscs see also 
\cite{DorBalls,TAMS,Glob1,Hakim,Low2,Sten}.

%
%
%  Algebraic manifolds
%
%
Assume now that $X$ is a quasi-projective algebraic manifold.
We shall see that in this case every subvariety $V=f(D)\subset X$,
obtained by the proof of Theorem \ref{Main1}, is a limit of domains
contained in algebraic varieties in $X$ and normalized by $D$. 

By a theorem of Stout \cite{Stout} (see also \cite{DLS,LM})
we can assume that $D$ in Theorem \ref{Main1} is a smoothly bounded, 
strongly pseudoconvex, Runge domain 
in an affine algebraic manifold $S\subset\C^N$
of pure dimension $d$. A holomorphic map $f$ from an open set $U\subset S$ to 
a quasi-projective algebraic variety $X$ is said to be {\em Nash algebraic} 
(see Nash \cite{Nash}) if its graph 
\[
	G_f=\{(z,f(z)) \in S\times X\colon z\in U\}
\]
is contained in a pure $d$-dimensional algebraic subvariety of 
$S\times X$. We then have the following result 
(c.f.\ \cite[Corollary 1.2]{BDF1} for $d=1$).

%
%
%  COROLLARY: ALGEBRAIC MANIFOLDS
%
%
\begin{corollary}
\label{algebraic}
Assume that $X$ is a quasi-projective algebraic manifold,
$\rho\colon X\to \R$ is a smooth exhaustion 
function that satisfies one of the conditions in 
Theorem \ref{Main1} on the set $\Omega=\{x\in X\colon \rho(x)>0\}$,
and $D\Subset S$ is a smoothly bounded, 
strongly pseudoconvex Runge domain in an affine algebraic
manifold $S$. Given a map $f_0\colon \bar D \to X$ 
as in Theorem \ref{Main1}, with $f_0(bD)\subset \Omega$,
there is a sequence of 
Nash algebraic maps $f_j\colon U_j\to X$,
defined in open sets $\bar D\subset U_j\subset S$, 
such that $f_j(bD)\subset \Omega$, 
\[
	\lim_{j\to\infty} \, \bigl( \inf\{\rho\circ f_j(z) \colon z\in bD\}\bigr)  \to +\infty,
\]
and the sequence $f_j|_D$ converges to a proper holomorphic map $f\colon D\to X$
as $j\to\infty$. Furthermore, $f$ can be chosen to approximate $f_0$ as close 
as desired uniformly on a given compact subset of $D$.
\end{corollary}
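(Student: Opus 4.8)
\emph{Proof idea.} The strategy is to run the proof of Theorem~\ref{Main1} once to produce the proper holomorphic map $f$ together with a convergent sequence of holomorphic maps approximating it, and then to perturb each member of that sequence to a Nash algebraic map, keeping the perturbations small enough that the convergence and all the boundary estimates survive. Concretely, I would first extract from the proof of Theorem~\ref{Main1}, applied with the given exhaustion $\rho$ (so that $\Omega=\{\rho>0\}$ and the slabs $\{c_1\le\rho\le c_2\}$ are compact), the following data: a sequence of holomorphic maps $g_j$, each defined on an open neighborhood $V_j\subset S$ of $\bar D$, and an increasing sequence $c_j\to+\infty$, such that $g_j(bD)\subset\{\rho>c_j\}$, the restrictions $g_j|_D$ converge uniformly on compacts in $D$ to a proper holomorphic map $f\colon D\to X$, and $f$ (hence every $g_j$, on a fixed compact $K\subset D$) is as close as we wish to $f_0$ uniformly on $K$. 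Here it matters that the construction in Theorem~\ref{Main1}, being built from the method of gluing holomorphic sprays which operates over neighborhoods of $\bar D$, actually produces maps holomorphic in neighborhoods of $\bar D$, and that one is free to prescribe in advance an arbitrarily fast rate for $\sup_K|g_{j+1}-g_j|\to 0$.

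Next I would invoke algebraic approximation. Replacing $X$ by a smooth projective algebraic manifold $\bar X$ in which it sits as a Zariski open subset (resolve a projective closure of $X$), each $g_j$ becomes a holomorphic map $V_j\to\bar X$. Since $\bar D$ is $\cO(S)$-convex it admits a neighborhood basis of Runge domains in the affine algebraic manifold $S$, so we may assume $V_j$ is Runge. By the algebraic approximation theorem of Demailly--Lempert--Shiffman \cite{DLS}, $g_j$ can be approximated, uniformly on the compact set $\bar D\subset V_j$, by Nash algebraic maps with values in $\bar X$; choose such an $f_j$, defined on an open set $U_j\subset S$ with $\bar D\subset U_j$, satisfying $\|f_j-g_j\|_{\bar D}<\e_j$. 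Because $g_j(\bar D)$ is a compact subset of the open set $X\subset\bar X$, for $\e_j$ small we get $f_j(\bar D)\subset X$, whence, after shrinking $U_j$, $f_j(U_j)\subset X$; intersecting the algebraic subvariety of $S\times\bar X$ that carries the graph of $f_j$ with $S\times X$ shows that $f_j\colon U_j\to X$ is Nash algebraic in the sense of the statement.

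Finally I would fix the errors. Choosing $\e_j$ small with respect to a metric on $X$ for which $\rho$ is uniformly continuous on the compact slab $\{c_j-1\le\rho\le c_j+1\}$ yields $\inf_{z\in bD}\rho(f_j(z))\ge c_j-1\to+\infty$, and in particular $f_j(bD)\subset\Omega$; taking in addition $\e_j$ small relative to the prescribed gaps $\sup_K|g_{j+1}-g_j|$ and summable forces the sequence $f_j|_D$ to converge, uniformly on compacts in $D$, to $f$ (since $f_j-g_j\to 0$ on $\bar D$ while $g_j|_D\to f$), and $f$ approximates $f_0$ on $K$ by the corresponding clause of Theorem~\ref{Main1}. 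The one genuinely delicate point — the main obstacle — is the legitimacy of this approximation step: one must know that the maps produced by the proof of Theorem~\ref{Main1} are holomorphic on honest neighborhoods of $\bar D$, so that \cite{DLS} applies over a Runge domain, and that the Nash algebraic approximants can be kept $X$-valued, not merely $\bar X$-valued, on a neighborhood of $\bar D$. Both are handled as above, but they are precisely the points where an argument working only with maps on $D$ and their boundary values would break down.
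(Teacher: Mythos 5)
Your overall strategy — run the inductive construction of Theorem~\ref{Main1} to produce a sequence of maps converging to $f$, then perturb each term to a Nash algebraic map using algebraic approximation, with errors controlled so convergence and boundary lifting survive — is the right one and is essentially what the paper sketches (it refers to \cite[Corollary 1.2]{BDF1} plus \cite{DLS} and \cite{Lempert}). However, there is a genuine misstatement at what you yourself identify as the delicate point.

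You assert that the intermediate maps produced by the proof of Theorem~\ref{Main1} are ``holomorphic in neighborhoods of $\bar D$'' because the gluing of sprays ``operates over neighborhoods of $\bar D$.'' This is not the case. By Definition~\ref{Spray} the sprays are only of class $\cA^0(D)$: holomorphic on $D\times P$ and continuous on $\bar D\times P$. Lemma~\ref{sprays-exist}, Proposition~\ref{gluing-sprays}, Lemma~\ref{MainLemma} and Lemma~\ref{bigstep} all produce sprays of class $\cA^0(D)$, and the proof of Theorem~\ref{Main1} explicitly constructs $f$ as a limit of continuous maps $\bar D\to X$ holomorphic in $D$. Nothing in the construction pushes these maps past $bD$, and indeed it cannot in general, since the limit $f$ need not extend. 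Consequently the Demailly--Lempert--Shiffman theorem cannot be applied directly to the $g_j$: that theorem requires a genuinely holomorphic map on an open Runge domain containing $\bar D$.

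The missing step is an intermediate approximation: an $\cA^0(\bar D)$ map with values in $X$ can be approximated, uniformly on $\bar D$, by maps holomorphic in open (Runge) neighborhoods of $\bar D$ in $S$. This is nontrivial for manifold targets and is precisely what \cite{BDF2} and \cite{FFAsian} provide (in the scalar-valued case it is a Mergelyan-type theorem on strongly pseudoconvex domains). With that inserted before invoking \cite{DLS} — and, when $S$ is a general affine algebraic manifold rather than $\C^N$, invoking Lempert's theorem \cite[Theorem 1.1]{Lempert}, which the paper cites alongside \cite{DLS} and which your write-up omits — your argument closes. Alternatively, one can fold the algebraic approximation into each inductive step of the proof of Theorem~\ref{Main1}, always replacing the newly glued spray by a Nash algebraic one on a neighborhood of $\bar D$ before proceeding; this is in the spirit of \cite[Corollary 1.2]{BDF1} and avoids the post-hoc approximation of $\cA^0$ maps, at the price of having to check that the spray properties survive a small perturbation.
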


The image $f_j(U_j)$ of the Nash algebraic map $f_j$ in 
Corollary \ref{algebraic} is contained in a pure 
$d$-dimensional algebraic subvariety $\varGamma_j$ of $X$
(the projection to $X$ of an algebraic subvariety in $S\times X$
containing the graph of $f_j$). As $j\to\infty$, the domains
$f_j(D)\subset \varGamma_j$ converge to the subvariety 
$f(D) \subset X$, while their boundaries $f_j(bD)$ tend to infinity in $X$.

Corollary \ref{algebraic} is seen exactly as \cite[Corollary 1.2]{BDF1} 
by combining the proof of Theorem \ref{Main1} with the  
approximation theorems of Demailly, Lempert and Shiffman 
(see \cite[Theorem 1.1]{DLS}) and Lempert (see \cite[Theorem 1.1]{Lempert}).

%
%
%  HOMOTOPY CLASSES OF MAPS
%
%

Here is another natural question:

\begin{question}
When is a continuous map $D\to X$ from a 
strongly pseudoconvex Stein domain $D$ to a complex manifold $X$
homotopic to a proper holomorphic map?
\end{question} 

The following result in this direction generalizes 
Corollaries 1.5 and 1.6 in \cite{BDF1} which concern 
the one dimensional case; the proofs given there
also apply in our case by using Theorem \ref{Main1}. 
The Oka property was defined in 
Example \ref{Counterex}; for more details see \cite{ANN}.

\begin{corollary}
\label{cor3}
Let $D\Subset S$ be a smoothly bounded, 
strongly pseudoconvex domain in a 
$d$-dimensional Stein manifold $S$, and let $X$ be a complex manifold 
of dimension $n\ge 2d$ that is $(n-2d+1)$-complete.
Let $J_S$ (resp.\ $J_X$) denote the complex structure operator on $S$
(resp.\ on $X$).
\begin{itemize} 
\item[(i)] If $d\ne 2$ then for every continuous map $f_0\colon\bar D\to X$
there exists a Stein structure $\wt J_S$ on $S$ that is homotopic to $J_S$
and such that $D$ is strongly $\wt J_S$-pseudoconvex, 
and there exists a proper $(\wt J_S,J_X)$-holo\-morphic map 
$f\colon D\to X$ homotopic to $f_0|_D$.
\item[(ii)] If $d=2$ then the conclusion (i) holds after changing the 
$\cC^\infty$ structure on $S$ (i.e., the new Stein structure 
$\wt J_S$ may be exotic). 
\item[(iii)] If $X$ enjoys the Oka property  then every
continuous map $\bar D\to X$ is homotopic to a proper 
$(J_S,J_X)$-holomorphic map $D\to X$.
\end{itemize}
\end{corollary}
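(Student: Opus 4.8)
The plan is to reduce Corollary \ref{cor3} to the combination of Theorem \ref{Main1} with the existence theory for Stein structures on smooth manifolds, exactly as in the one-dimensional case treated in \cite{BDF1}. The key point is that a $(n-2d+1)$-complete manifold $X$ of dimension $n\ge 2d$ automatically satisfies condition (a) of Theorem \ref{Main1}: by definition $X$ carries a smooth exhaustion function $\rho\colon X\to\R$ whose Levi form has at least $r=n-(n-2d+1)+1=2d$ positive eigenvalues everywhere, so $r\ge 2d=2\dim D$, and every sublevel pair $\Omega_{c_1,c_2}$ is compact because $\rho$ is an exhaustion. After a small perturbation in the fine $\cC^2$-topology we may assume $\rho$ is Morse, retaining the Levi convexity. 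Thus, once we have a strongly pseudoconvex Stein domain $D$ and a map $f_0\colon\bar D\to X$ holomorphic in $D$ with $f_0(bD)\subset\Omega=\{\rho>0\}$ (which we can always arrange by first shrinking and then adding a large constant to $\rho$, since $f_0(\bar D)$ is compact), Theorem \ref{Main1} provides a proper holomorphic map $f\colon D\to X$ approximating $f_0$ on compacts; in particular $f$ is homotopic to $f_0|_D$ through the straight-line homotopy in a tubular neighborhood of the graph, or more simply because close maps into a manifold are homotopic.

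For part (iii) this is all that is needed: if $X$ enjoys the Oka property, then the given continuous map $\bar D\to X$ is first homotopic, by the Oka principle (with approximation on the $\cO(S)$-convex compact $\bar D'$ for a slightly smaller strongly pseudoconvex domain $D'\Subset D$), to a map that is holomorphic near $\bar D'$; restricting and applying the previous paragraph with $D'$ in place of $D$ then yields a proper holomorphic map homotopic to the original continuous map. Here no change of complex structure on $S$ is required because we keep the given $(J_S,J_X)$.

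For parts (i) and (ii) the point is that a general continuous map $f_0\colon\bar D\to X$ need not be homotopic to a holomorphic one for the given Stein structure $J_S$, so we invoke the Eliashberg--Gromov theory of Stein structures (as adapted in \cite{ANN, BDF1}): up to homotopy of almost complex structures, and changing the Stein structure $J_S$ to a homotopic Stein structure $\wt J_S$ for which $D$ remains strongly $\wt J_S$-pseudoconvex, any continuous map from $\bar D$ is homotopic to a $(\wt J_S,J_X)$-holomorphic one. The dimension restriction $d\ne 2$ is precisely the classical restriction in Eliashberg's theorem that one cannot alter the smooth structure; when $d=2$ one may be forced to replace the $\cC^\infty$ structure on $S$ by an exotic one, giving part (ii). Once the map is made holomorphic near a strongly $\wt J_S$-pseudoconvex domain and its boundary values pushed into $\Omega$, Theorem \ref{Main1} finishes the proof as before.

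The only genuine obstacle is bookkeeping: checking that the Stein-structure homotopy machinery, which is stated for maps into arbitrary targets, can be combined with the boundary normalization $f_0(bD)\subset\Omega$ and with the approximation statement of Theorem \ref{Main1} so that the resulting proper holomorphic map stays in the correct homotopy class. This was carried out in detail in \cite{BDF1} for $d=1$, and the identical argument applies here once Theorem \ref{Main1} replaces \cite[Theorem 1.1]{BDF1}; hence we only indicate the modifications and refer to \cite{BDF1} for the remaining details.
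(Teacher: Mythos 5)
Your proposal is correct and follows the same route the paper takes: the paper's own justification for Corollary \ref{cor3} is precisely that the proofs of Corollaries 1.5 and 1.6 in \cite{BDF1} carry over verbatim once Theorem \ref{Main1} replaces \cite[Theorem 1.1]{BDF1}, and you reconstruct exactly that reduction --- verifying that $(n-2d+1)$-completeness gives condition (a), invoking the Oka property for (iii), and invoking the Eliashberg--Gromov theory of Stein structures (with the $d=2$ exotic-structure caveat) for (i) and (ii).
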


For further results see Theorem \ref{complements}, Theorem \ref{homogeneous}
and Corollary \ref{projective}.

%
%
%  ORGANIZATION
%
%	
{\em Organization of the paper.}
In \S \ref{normal} and \S \ref{critical}
we analyse the behavior of a $q$-convex function near a
Morse critical point. 
In \S\ref{spray} we recall the relevant results from \cite{BDF1} 
on the theory of holomorphic sprays.
Theorem \ref{Main1} is proved in \S\ref{proof}. 
In \S\ref{vector-bundles} we recall the notions
of Griffiths positivity and signature of a Hermitian
holomorphic vector bundle, as well as their connection with the 
Levi convexity properties. This information is used in 
\S\ref{subvariety-complements}
where we study the existence of subvarieties as in 
Theorem \ref{Main1} in complements of certain compact 
complex submanifolds.

%
%
%
%  QUADRATIC NORMAL FORM FOR CRITICAL POINTS OF Q-CONVEX FUNCTIONS 
%
%
%
\section{Quadratic normal form for critical points of $q$-convex functions}
\label{normal}
In this section we describe  a quadratic normal form of a $q$-convex
function $\rho$ at a nondegenerate critical point $p$.
In the following section this normal form will be used
in the construction of a $q$-convex function $\tau$ 
that allows us to pass the critical level $\{\rho=\rho(p)\}$
by applying the noncritical case of our lifting construction 
with $\tau$ (instead of $\rho$).  

Since our considerations are completely local,
we assume that $\rho$ is a real valued $\cC^2$-function in an 
open neighborhood of the origin in $\C^n$, with a 
nondegenerate (Morse) critical point at $0$, and $\rho(0)=0$.
Suppose that $\rho$ is $q$-convex at $0$ for some 
$q\in\{1,2,\ldots,n+1\}$; this means that its Levi form
$\cL_\rho(0)$ has at least $r=n-q+1$ positive eigenvalues
(the remaining $s=q-1$ eigenvalues can be of any sign). 
By a complex linear change of coordinates on $\C^n$ we can achieve
that the subspace $\C^r\times \{0\}^s$ is spanned
by (some of the) eigenvectors corresponding to the positive eigenvalues 
of $\cL_\rho(0)$ and that $0$ is a Morse critical point of
$\rho(\cdotp,0)$. We denote the coordinates
on $\C^n=\C^r\times \C^s= \C^r \times \R^{2s}$ by $z=(\zeta,u)$,
where $\zeta=x+\mathrm{i}y\in \C^r$ $(x,y\in\R^r)$ and $u\in\R^{2s}$.
By shrinking the domain of $\rho$ to a sufficiently 
small polydisc $P=P^r \times P^s \subset \C^n$ around $0$ 
we can assume that the function $\zeta\to \rho(\zeta,u)$ is strongly
plurisubharmonic on $P^r$ for each fixed $u \in P^s$.

Lemma 3 from \cite{HW}, applied to the strongly plurisubharmonic
function $\zeta \to \rho(\zeta,0)$, 
gives a complex linear change of coordinates on $\C^r$ 
and a number $k\in \{0,1,\ldots, r\}$ such that,
in the new coordinates, we have
\[
	\rho(\zeta,0) = \sum_{j=1}^r (\delta_j x_j^2 + \lambda_j y_j^2)  +o(|\zeta|^2),
\]
where $\lambda_j>1$, $\delta_j=-1$ for $j=1,\ldots,k$,
and $\lambda_j\ge 1$, $\delta_j=+1$ for $j=k+1,\ldots,r$. 
Note that $k$ is the Morse index of $\rho(\cdotp,0)$ at 
$\zeta=0$.

Writing $x'=(x_1,\ldots,x_k)$ and $x''=(x_{k+1},\ldots,x_r)$ we obtain
\[
	 \rho(\zeta,0) =  - |x'|^2 + |x''|^2 + \sum_{j=1}^r \lambda_j y_j^2 + o(|\zeta|^2). 
\]

Consider the full second order Taylor expansion of $\rho$ at $0\in\C^n$:
\[
	\rho(z)= \rho(\zeta,u)= \rho(\zeta,0) + \sum_{j=1}^{2s} u_j a_j(x,y) +
		\sum_{i,j=1}^{2s} c_{ij}\, u_i u_j  + o(|z|^2).
\]
Here $a_j(x,y)= \sum_{l=1}^r \alpha_{jl} x_l+\beta_{jl} y_l$ 
are real-valued linear functions on $\C^r=\R^{2r}$ 
and $c_{ij}=c_{ji}$ are real constants. 

Our next aim is to remove the mixed terms $u_j a_j(x,y)$ by using 
a shear of the form $(\zeta,u)\mapsto (\zeta+h(u),u)$ for a
suitable $\R$-linear map $h\colon \R^{2s}\to \C^r$; such transformation 
is holomorphic (indeed, affine linear) in the $\zeta$-coordinates, 
and hence it preserves (strong) $\zeta$-plurisubharmonicity.
To find such $h$, we consider the critical point equation 
$\di_\zeta \rho^{(2)}(\zeta,u)=0$, where $\rho^{(2)}$ denotes 
the 2nd order homogeneous polynomial of $\rho$:
\[
		\frac{\di \rho^{(2)}}{\di x_i}(\zeta,u) = 
		2\delta_i x_i + \sum_{j=1}^{2s} u_j \alpha_{ji}=0; \quad
		\frac{\di \rho^{(2)}}{\di y_i}(\zeta,u) = 
		2\lambda_i y_i + \sum_{j=1}^{2s} u_j \beta_{ji}=0.
\] 
This system has a unique (linear) solution $\zeta=x+iy=h(u)$, and
the quadratic map $\zeta\to \rho^{(2)}(\zeta+h(u),u)$ has a unique critical
point at $\zeta=0$ for every $u$. Writing 
$\rho(\zeta,u)= \wt \rho(\zeta+h(u),u)$, the function
$\wt\rho$ is of the same form as $\rho$, but with $a_j(x,y)=0$ for all 
$j=1,\ldots, 2s$. We drop the tilde and denote 
the new function again by $\rho$. 

The classical theorem of Sylvester furnishes an $\R$-linear    
transformation of the $u$-coordinates which puts 
$\sum_{i,j=1}^{2s} c_{ij}\, u_i u_j$ into a normal form
$-|u'|^2 + |u''|^2$, where $u'=(u_1,\ldots,u_m)$ and 
$u''=(u_{m+1},\ldots,u_{2s})$ for some $m\in\{0,1,\ldots,2s\}$. 
This gives $\rho(\zeta,u)=\wt\rho(\zeta,u)+ o(|\zeta|^2+|u|^2)$ where 
\begin{equation}
\label{normal-form}
	\wt\rho(\zeta,u)= - |x'|^2  -|u'|^2 + |x''|^2 + 
					|u''|^2 + \sum_{j=1}^r \lambda_j y_j^2, 
\end{equation}	
$\lambda_j>1$ for $j=1,\ldots,k$, and $\lambda_j\ge 1$
for $j=k+1,\ldots,r$. 
We shall say that (\ref{normal-form})
is a {\em quadratic normal form} for critical points of $q$-convex functions.
Note that $k+m$ is the Morse index of $\rho$ (or $\wt\rho$) at $0$.

We summarize the above discussion in the following lemma;
for the strong\-ly pseudoconvex case see \cite[Lemma 2.5]{HL2}.

%
%
%  LEMMA: NORMAL FORM
%
%
\begin{lemma}
\label{lemma-normal}
{\em (Quadratic normal form of a $q$-convex critical point)}
Assume that $X$ is an $n$-dimensional complex manifold
and that $\rho\colon X\to \R$ is a $\cC^2$-function with
a nondegenerate critical point at $p_0 \in X$.
If $\rho$ is $q$-convex at $p_0$ for some $q\in \{1,\ldots,n+1\}$ 
then there exist 
\begin{itemize}
\item[(i)]  a local holomorphic coordinate map 
$z=(\zeta,w) \colon U\to \C^r\times \C^s$ 
on an open neighborhood $U\subset X$ of $p_0$, with $z(p_0)=0$,
$r=n-q+1$ and $s=q-1$, 
\item[(ii)] a change of coordinates $\psi(z)=\psi(\zeta,w)=(\zeta+h(w),g(w))$
on $\C^n$ that is $\R$-linear in $w\in\C^{s}=\R^{2s}$, and
\item[(iii)] a normal form $\wt\rho(\zeta,u)$ of type 
(\ref{normal-form}), 
\end{itemize} 
such that, setting $\phi(p)=  \psi(z(p)) \in \C^n$ for $p\in U$, we have
\[
	\rho(p)= \rho(p_0) + \wt\rho( \phi(p)) + o(|\phi(p)|^2),\quad p\in U. 
\]
Furthermore, we can approximate $\rho$ as close as desired in the 
$\cC^2$-topology by a $q$-convex function $\rho'$ that agrees with $\rho$  
outside of $U$ and has a {\em nice critical point} at $p_0$, 
in the sense that $\rho'(p)= \rho(p_0) + \wt\rho( \phi(p))$ near $p_0$.
If $\rho$ is $\cC^r$-smooth for some $r\in\{2,3,\ldots,\infty\}$
then $\rho'$ can also be chosen $\cC^r$-smooth.
\end{lemma}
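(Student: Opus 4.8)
The plan is to carry out the computation described in the paragraphs immediately preceding the lemma and then package the outcome. First I would invoke the hypothesis that $\rho$ is $q$-convex at $p_0$: its Levi form $\cL_\rho(p_0)$ has $r=n-q+1$ positive eigenvalues. Choose any holomorphic coordinate chart centered at $p_0$, then apply a complex-linear change so that the coordinate subspace $\C^r\times\{0\}^s$ is spanned by eigenvectors for positive Levi eigenvalues and so that the restriction $\rho(\cdot,0)$ has a Morse critical point at $0$; writing $z=(\zeta,w)$ and shrinking to a polydisc $P=P^r\times P^s$, I get that $\zeta\mapsto\rho(\zeta,u)$ is strongly plurisubharmonic on $P^r$ for every fixed $u$. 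This gives item (i).

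Next I would produce the change of coordinates $\psi$ of item (ii) in two stages, exactly as in the discussion above. Apply Lemma~3 of \cite{HW} to the strongly plurisubharmonic function $\zeta\mapsto\rho(\zeta,0)$ to get, after a complex-linear change of the $\zeta$-coordinates, the diagonal quadratic part $-|x'|^2+|x''|^2+\sum_{j=1}^r\lambda_j y_j^2$ with $\lambda_j>1$ for $j\le k$, $\lambda_j\ge 1$ for $j>k$, where $k$ is the Morse index of $\rho(\cdot,0)$. Then write the full second-order Taylor expansion of $\rho$ at $0$, isolate the mixed terms $\sum u_j a_j(x,y)$ with $a_j$ real-linear, and kill them by the $\zeta$-affine shear $(\zeta,w)\mapsto(\zeta+h(w),w)$, where $h$ is the unique $\R$-linear map solving the critical-point equations $\di_\zeta\rho^{(2)}(\zeta,u)=0$; this shear is holomorphic in $\zeta$ hence preserves $\zeta$-plurisubharmonicity. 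Finally apply Sylvester's law of inertia to the remaining quadratic form $\sum c_{ij}u_iu_j$ via an $\R$-linear change $g$ of the $w$-coordinates to bring it to $-|u'|^2+|u''|^2$. Composing, $\psi(\zeta,w)=(\zeta+h(w),g(w))$ and $\phi=\psi\circ z$ yield the asserted expansion $\rho(p)=\rho(p_0)+\wt\rho(\phi(p))+o(|\phi(p)|^2)$ with $\wt\rho$ of the form \eqref{normal-form}, and $k+m$ is the Morse index of $\rho$ at $p_0$.

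For the last sentence — replacing $\rho$ by a $q$-convex $\rho'$ with a \emph{nice} critical point — I would use a cutoff. Pick $\chi\in\cC^\infty(\C^n)$ equal to $1$ near $0$ and supported in a small ball $B_\epsilon\subset\phi(U)$, and set $\rho'=\rho+\chi\cdot(\rho(p_0)+\wt\rho\circ\phi-\rho)$ near $p_0$, extended by $\rho$ outside $U$; thus $\rho'=\rho(p_0)+\wt\rho\circ\phi$ on a neighborhood of $p_0$ and $\rho'=\rho$ off a small neighborhood. Since the correction term is $o(|\phi|^2)$, on $B_\epsilon$ its $\cC^2$-norm is $o(1)$ as $\epsilon\to 0$ (each derivative of $\chi$ is compensated by the vanishing order of the correction), so $\rho'$ is $\cC^2$-close (and $\cC^r$-close if $\rho$ is $\cC^r$) to $\rho$; the closeness of the Levi forms then preserves the property of having $\ge r$ positive Levi eigenvalues, i.e.\ $q$-convexity, on a neighborhood of $p_0$, and $\rho'$ is unchanged elsewhere. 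The main obstacle is the bookkeeping in this last step: one must check that the Leibniz-rule terms coming from derivatives of $\chi$ hitting the $o(|\phi|^2)$ correction really do tend to $0$ in $\cC^2$ as the cutoff radius shrinks, which requires that the correction vanish to second order at $0$ together with its first two derivatives — this follows from the Taylor expansion but deserves an explicit estimate. Everything else is the routine linear algebra (Lemma~3 of \cite{HW}, solving a nondegenerate linear system for $h$, Sylvester) already laid out before the statement.
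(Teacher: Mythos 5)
Your proof is correct and follows essentially the same route as the paper: the normal form is assembled exactly as in the discussion preceding the lemma (Lemma 3 of \cite{HW}, the $\zeta$-affine shear, Sylvester), and the "nice critical point" is produced by the same cutoff trick — your $\rho' = \rho - \chi\cdot(\rho-\rho(p_0)-\wt\rho\circ\phi)$ with $\chi\equiv 1$ near $0$ is just the complementary form of the paper's $\rho' = \rho(p_0)+\wt\rho\circ\phi + \chi(\e^{-1}\phi)\cdot o(|\phi|^2)$ with $\chi\equiv 0$ near $0$. The $\cC^2$-smallness estimate you flag (derivatives of the cutoff landing on the $o(|\phi|^2)$ remainder) does indeed work out because the remainder together with its first two derivatives vanishes at $p_0$, exactly compensating the $\e^{-1}$, $\e^{-2}$ blow-up of $D\chi_\e$, $D^2\chi_\e$ on the annulus where they are supported.
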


\begin{proof}
Everything except the claim in the penultimate sentence has 
been proved  above. The latter is seen by taking  
\[
	\rho'(p) = \rho(p_0) + \wt\rho( \phi(p)) + 
	\chi\bigl( \e^{-1}\phi(p)\bigr) \, o(|\phi(p)|^2),
\]
where $\chi\colon \C^n\to[0,1]$ is a smooth function
that equals zero in the unit ball $\B\subset \C^n$, and equals one
outside of $2\B$. When $\e>0$ decreases to zero, the $\cC^2$-norm of the 
last summand tends to zero uniformly on $U$. 
\end{proof}

%
%
%  PASSING A CRITICAL LEVEL OF A Q-CONVEX FUNCTION
%
%
\section{Crossing a critical level of a $q$-convex function}
\label{critical}
Let $p_0 \in X$ be a {\em nice critical point}
of a $\cC^2$-function $\rho\colon X\to \R$ that is $q$-convex near 
$p_0$ and satisfies $\rho(p_0)=0$ (see Lemma \ref{lemma-normal}).
Choose a neighborhood $U\subset X$ of $p_0$ and a coordinate map 
$\phi \colon U\to P$ onto a polydisc $P\Subset \C^n$ 
as in Lemma \ref{lemma-normal} such that the function 
$\wt \rho=\rho\circ\phi^{-1} \colon P\to \R$ 
is a $q$-convex normal form (\ref{normal-form}). Set
$
	Q(y,x'',u'')= \sum_{j=1}^r \lambda_j y_j^2 + |x''|^2 + |u''|^2;
$
hence 
\begin{equation}
\label{simplified}
		\wt\rho(x+\mathrm{i} y,u) = -|x'|^2 - |u'|^2 + Q(y,x'',u'').
\end{equation}		
Let $c_0\in (0,1)$ be chosen sufficiently small such that
\[
		\{(x+\mathrm{i} y,u)\in\C^r\times \R^{2s} \colon 
			 |x'|^2 + |u'|^2 \le c_0,\ Q(y,x'',u'') \le 4c_0\} 
			 \subset P.
\]
Set 
\[
	\wt E=\{(x+\mathrm{i} y,u)\in\C^r\times \R^{2s} \colon |x'|^2 + |u'|^2 \le c_0,\ 
	 			 y=0,\ x''=0,\ u''=0\}.
\] 
Its preimage $E=\phi^{-1}(\wt E)\subset U$ is an embedded disc 
of dimension $k+m$ (the Morse index of $\rho$ at $p_0$) that is attached
from the outside to the sublevel set $\{\rho\le -c_0\}$ along
the sphere $bE\subset \{\rho = -c_0\}$. In the metric on $U$,
inherited by $\phi$ from the standard metric in $\C^n$, $E$ is
the (local) stable manifold of $p_0$ for the gradient flow of $\rho$.

The following lemma generalizes \cite[Lemma 6.7]{ACTA}
to $q$-convex functions. (The cited lemma applies to $q=1$, 
that is, to a strongly plurisubharmonic function $\rho$.)

%
%
%  LEMMA: CROSSING
%
%
\begin{lemma} 
\label{crossing}
{(Notation and assumptions as above.)} 
Assume that $\rho$ is $q$-convex in the set 
$K_{c_0}= \{p\in X\colon -c_0\le \rho(p) \le 3c_0\} \Subset X$
and that $p_0$ is the only critical point of $\rho$ in $K_{c_0}$.
Assume that a normal form of $\rho$ at $p_0$ is given 
by (\ref{normal-form}), where $k\in \{1,\ldots,r\}$ and 
$\lambda=\min \{\l_1,\ldots,\l_k\}>1$.  
Given a number $t_0$ with $0< t_0 < (1-\frac{1}{\lambda})^2c_0$,
there is a $\cC^2$-function $\tau \colon \{\rho \le 3c_0\} \to\R$
enjoying the following properties (see Figure \ref{Tau}):
\begin{itemize}
\item[(i)]   $\{\rho\le -c_0\} \cup E \subset \{\tau\le 0\} \subset \{\rho\le -t_0\}\cup E$, 
\item[(ii)]  $\{\rho \le c_0\} \subset \{\tau \le 2c_0\} \subset \{\rho< 3c_0\}$,
\item[(iii)] $\tau$ is $q$-convex at every point of $K_{c_0}$, and
\item[(iv)]  $\tau$  has no critical values in $(0,3c_0) \subset \R$.
\end{itemize}
If $\rho$ is $\cC^r$-smooth for some $r\in\{2,3,\ldots,\infty\}$
then $\tau$ can also be chosen smooth of class $\cC^r$.
\end{lemma}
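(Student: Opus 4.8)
The plan is to reduce everything to the local quadratic normal form of Lemma~\ref{lemma-normal} at $p_0$ and to build $\tau$ there as an explicit ``$q$-convex handle'' modification of $\rho$ supported near the descending disc $E$, spliced to a monotone reparametrisation of $\rho$ away from $E$; this is the $q$-convex counterpart of \cite[Lemma 6.7]{ACTA}, and I expect the verification of $q$-convexity through the modification to be the only genuinely delicate step, controlled exactly by the hypothesis $t_0<(1-\tfrac1\lambda)^2c_0$.

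Work in the chart $\phi\colon U\to P$ of Lemma~\ref{lemma-normal}, with $\rho\circ\phi^{-1}=-\sigma+\theta$ as in (\ref{simplified}), $\sigma=|x'|^2+|u'|^2$, $\theta=Q(y,x'',u'')=\sum_{j=1}^r\lambda_jy_j^2+|x''|^2+|u''|^2$; here $E$ is $\{\theta=0,\ \sigma\le c_0\}$, $\{\rho\le-c_0\}$ is $\{\theta-\sigma\le-c_0\}$, and $p_0$ is $\{\sigma=\theta=0\}$. The one computation I would isolate first is that for $\psi=\psi(\sigma,\theta)$ and $v_j$ the coordinate vector in the $j$-th $\C$-direction, $1\le j\le k$, the chain rule gives (the mixed second derivative cancelling)
\[
	\cL_\psi(p;v_j)=\tfrac12\,\psi_\sigma+\tfrac{\lambda_j}{2}\,\psi_\theta+\psi_{\sigma\sigma}\,x_j^2+\lambda_j^2\,\psi_{\theta\theta}\,y_j^2;
\]
in particular $\cL_{\rho\circ\phi^{-1}}(p;v_j)=\tfrac{\lambda_j-1}{2}\ge\tfrac{\lambda-1}{2}>0$, which is the room available in the hard directions.

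Now define $\tau$. Pick a smooth increasing $\beta$ with $\beta(s)=s$ for $s\le-c_0$, $\beta(-t_0)=0$, $\beta(c_0)\le2c_0$, $\beta(3c_0)\ge3c_0$, and $\beta''$ concentrated where $|\partial\rho|$ is not small and kept small relative to $\lambda-1$; set $\tau=\beta\circ\rho$ away from a thin neighbourhood $N$ of $E$, and $\tau=\rho+\psi$ inside $N$, where $\psi=\psi(\sigma)\ge0$ is supported in $N$, equals $\sigma$ on the $E$-core (so that there $\tau=\theta$, whose zero set is exactly $E$), and tapers monotonically to $0$ before $\sigma$ reaches $c_0$; crucially $\psi\le t_0$ can be arranged everywhere with the taper spread over a $\sigma$-window of length of order $c_0$, so that $|\psi_\sigma|$ and $|\psi_{\sigma\sigma}|\,x_j^2$ are $\lesssim t_0/c_0$. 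Then (ii) is read off from $\tau=\beta\circ\rho$ near $\{\rho=c_0\}$ and $\{\rho=3c_0\}$; the inclusion $\{\rho\le-c_0\}\cup E\subset\{\tau\le0\}$ is immediate since $\beta(s)\le0$ for $s\le-t_0$, $\psi\le t_0$, and $\tau|_E=\theta=0$ near $p_0$; and $\{\tau\le0\}\subset\{\rho\le-t_0\}\cup E$ comes down to checking, inside $N$, that off $E$ one has $\tau>0$ whenever $\rho>-t_0$ — true because on $\{\rho>-t_0\}$ meeting $N$ one has $\theta>0$ and $\tau$ equals $\theta$ (or a small upward perturbation of it) on the $E$-core, while for $\sigma$ past the core $\rho$ is already $<-t_0$.

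For (iii): where $\tau=\beta\circ\rho$, $\cL_\tau=\beta'(\rho)\cL_\rho+\beta''(\rho)\,|\langle\partial\rho,\cdot\rangle|^2$ retains $\ge r$ positive Levi eigenvalues, as $\beta'>0$ preserves the positive cone of $\cL_\rho$ and the rank-one term is controlled by the choice of $\beta$; where $\tau=\theta$ it is $q$-convex; and in the taper the displayed identity applied to $\psi=\psi(\sigma)$ yields $\cL_\tau(p;v_j)\ge\tfrac{\lambda-1}{2}-C\,t_0/c_0$ for a numerical constant $C$, which stays positive precisely when $t_0/c_0<(1-\tfrac1\lambda)^2$ — the point being that $(\lambda-1)/\lambda^2\le\tfrac14$, whence $2C(1-\tfrac1\lambda)^2\le\lambda-1$ once the profile is chosen with $C\le2$. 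For (iv), on $\{0<\tau<3c_0\}$ one has $d\tau\ne0$: where $\tau=\beta\circ\rho$ this is $\beta'(\rho)\,d\rho$, and $d\rho=0$ in $K_{c_0}$ only at $p_0$, where $\tau\le0$; inside $N$ a vanishing of $d\tau=d\rho+\psi'\,d\sigma$ at a level in $(0,3c_0)$ would, on weighing $d(-\sigma)$, $d\theta$ and $\psi'\,d\sigma$ as in the Morse-theoretic handle lemma, force $\sigma$ and $\theta$ both small, i.e.\ proximity to $p_0$, where $\tau=\theta$ has its only critical value at $0$. Since $\phi$ and all cut-offs are $\cC^\infty$, $\tau$ inherits the smoothness $\cC^r$ of $\rho$. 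The whole difficulty is the balancing act in building $\psi$ — big enough near $E$ for (i), tame enough (small $|D^2\psi|$ where $\sigma\sim c_0$) for (iii) — and reconciling these is exactly what forces $t_0<(1-\tfrac1\lambda)^2c_0$.
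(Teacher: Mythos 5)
Your overall strategy---work in the normal-form chart, write $\rho=-\sigma+\theta$ with $\sigma=|x'|^2+|u'|^2$ and $\theta=Q(y,x'',u'')$, replace $\rho$ by $\rho+\psi(\sigma)$ with $\psi(\sigma)=\sigma$ near $E$, and verify $q$-convexity directly on the Levi form in the $k$ ``hard'' complex directions---is the same strategy as the paper's, and your Levi-form identity for $\psi(\sigma,\theta)$ is correct. But there is a genuine gap in how you complete $\psi$ away from the $E$-core and how you splice to the outside, and the gap is fatal for property (i).

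You want $\psi(\sigma)\ge 0$ to equal $\sigma$ near $\sigma=0$ and to \emph{taper monotonically to $0$} before $\sigma$ reaches $c_0$, with $\tau=\beta\circ\rho$ outside the modification zone, $\beta(-t_0)=0$, $\beta=\mathrm{id}$ on $(-\infty,-c_0]$. Two things break. First, the second inclusion of (i) fails in the taper zone: $\tau=\rho+\psi(\sigma)=\theta-\sigma+\psi(\sigma)$, so $\tau\le 0$ iff $\theta\le\sigma-\psi(\sigma)$. Once $\psi(\sigma)<t_0$ (which happens in your taper since $\psi\to 0$), there exist $\theta$ with $\sigma-t_0<\theta\le\sigma-\psi(\sigma)$, i.e.\ points off $E$ with $\rho>-t_0$ and $\tau\le 0$. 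This directly violates $\{\tau\le 0\}\subset\{\rho\le -t_0\}\cup E$; your one-line justification (``for $\sigma$ past the core $\rho$ is already $<-t_0$'') is only true on $E$ itself, not for $\theta>0$. Second, the splice at $\partial N$ overdetermines $\beta$: on $\{\sigma=c_0\}$ you have $\tau_{\mathrm{inside}}=\rho$ (since $\psi$ has tapered to $0$), while $\rho=\theta-c_0$ ranges over $[-c_0,\infty)$ there; matching forces $\beta=\mathrm{id}$ on a whole interval containing $-t_0$, contradicting $\beta(-t_0)=0$.

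What the paper does differently, and what fixes both problems at once, is to take $\tau=-h(\sigma)+\theta=\rho+(\sigma-h(\sigma))$ with $h$ a \emph{convex increasing} function satisfying $h(t)=0$ for $t\le t_0$, $h(t)=t-t_1$ for $t\ge c_0$ (with $t_1=c_0-h(c_0)\in(t_0,c_0)$), $0\le\dot h\le 1$, and $2t\ddot h+\dot h<\lambda$. Then the additive bump $\psi_{\mathrm{paper}}(\sigma)=\sigma-h(\sigma)$ is \emph{nondecreasing}, equals $\sigma$ for $\sigma\le t_0$, and \emph{plateaus at the constant $t_1>t_0$} for $\sigma\ge c_0$, so that (a) the second inclusion of (i) holds because $\psi_{\mathrm{paper}}\ge t_0$ everywhere where $\psi_{\mathrm{paper}}\ne\sigma$, and (b) outside the chart the splice is just the constant translate $\tau=\rho+t_1$, which trivially agrees at the interface. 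The $q$-convexity on each holomorphic leaf $\C^r\times\{u\}$ then follows from \cite[Lemma 6.8]{ACTA} applied to $h_c(t)=h(t+c)$, $c=|u'|^2$, and the threshold $t_0<(1-\tfrac1\lambda)^2c_0$ is what makes such an $h$ exist. Your rough ``$C\le 2$'' estimate in (iii) does not recover this threshold and in any case is moot once $\psi$ is replaced by the correct increasing, plateauing profile, since then $\psi'\ge 0$ only helps the Levi form. So the repair is not cosmetic: you must change the shape of $\psi$ from a bump that returns to $0$ to one that saturates at a constant in $(t_0,c_0)$ and patch by a constant shift rather than by $\beta\circ\rho$.
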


\begin{figure}[ht]
\psset{unit=0.6cm, xunit=1.5, linewidth=0.7pt} %% 
\begin{pspicture}(-4.5,-4.3)(4.5,4.3)

%
%
% Geometry 
%
%
\pscustom[fillstyle=solid,fillcolor=lightgray,linestyle=none]  % central part 
{
\pscurve(-3,-1.5)(-2.5,-0.8)(0,-0.3)(2.5,-0.8)(3,-1.5) 
\psline[linestyle=dashed,linewidth=0.2pt](3,-1.5)(3,1.5)
\pscurve[liftpen=1](3,1.5)(2.5,0.8)(0,0.3)(-2.5,0.8)(-3,1.5)
\psline[linestyle=dashed,linewidth=0.2pt](-3,1.5)(-3,-1.5)
}

\pscustom[fillstyle=solid,fillcolor=lightgray]
{\pscurve[liftpen=1](5,4)(3,1.5)(2.5,0)(3,-1.5)(5,-4)          % right hyperbola 
}

\pscustom[fillstyle=solid,fillcolor=lightgray]
{
\pscurve[liftpen=1](-5,4)(-3,1.5)(-2.5,0)(-3,-1.5)(-5,-4)      % left hyperbola 
}

%
%  The core disc and the circle
%
%
\psline(-2.5,0)(2.5,0)                                        % the core disc
\psecurve(5,4)(3,1.5)(2.5,0.8)(0,0.3)(-2.5,0.8)(-3,1.5)(-5,4) % upper boundary of L
\psecurve(5,-4)(3,-1.5)(2.5,-0.8)(0,-0.3)(-2.5,-0.8)(-3,-1.5)(-5,-4)  
																															% lower boundary of L

%
%
%  The level $h=c_0$
%
\pscurve[linestyle=dotted,linewidth=1pt](4,4)(2,1.5)(1.5,0)(2,-1.5)(4,-4)             \pscurve[linestyle=dotted,linewidth=1pt](-4,4)(-2,1.5)(-1.5,0)(-2,-1.5)(-4,-4)            
% left hyperbola 

\rput(0,3){$\{\rho=-t_0\}$}
\psline[linewidth=0.2pt]{->}(1,3)(3.05,3)
\psline[linewidth=0.2pt]{->}(-1,3)(-3.05,3)

%
%
%   NOTATION
%

\psline[linewidth=0.2pt]{->}(0,-2.2)(0,-0.3)
\psline[linewidth=0.2pt]{<-}(-3.7,-2)(-0.3,-2.6)
\psline[linewidth=0.2pt]{->}(0.35,-2.6)(3.7,-2)
\rput(0.05,-2.6){$\Omega_c$}

\psline[linewidth=0.2pt]{->}(-1,1.7)(-1,0.05)
\rput(-1,2){$E$}

\psdot(0,0)
\rput(0,1.2){$p_0$}
\psline[linewidth=0.2pt]{->}(0,1)(0,0.05)

\rput(3.8,0){$\{\rho<c-t_1\}$}
\rput(-3.8,0){$\{\rho<c-t_1\}$}

\end{pspicture}
\caption{The set $\Omega_c=\{\tau<c\}$}
\label{Tau}
\end{figure}
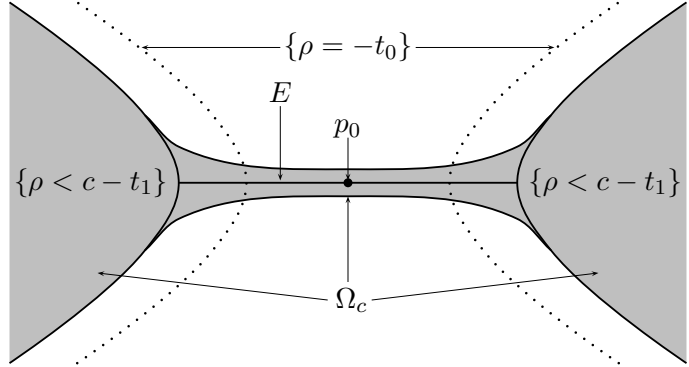

Each sublevel set $\Omega_c=\{\tau < c\}$ for $c\in (0,2c_0)$ 
is a domain with $\cC^2$ strongly $q$-convex 
boundary that contains $\{\rho\le -c_0\} \cup E$, and the latter 
set is a strong deformation retract of $\Omega_c$ (see Figure \ref{Tau}).
As $c$ decreases to $0$, the sets $\Omega_c \cap \{\rho\ge -t_0\}$ 
decrease to the disc $E'= E\cap \{\rho\ge -t_0\}$. Finally, 
the domain $\Omega_{2c_0}$ contains the set $\{\rho<c_0\}$.

\begin{proof}
In \cite[proof of Lemma 6.7, p.\ 178]{ACTA} the second named author
constructed a smooth convex increasing function $h\colon \R \to [0,+\infty)$ 
enjoying  the following properties (see Figure \ref{h}):
\begin{itemize}
\item[(i)]   $h(t)=0$ for $t\le t_0$, 
\item[(ii)]  for $t\ge c_0$ we have $h(t)=t - t_1$ with $t_1=c_0 - h(c_0) \in (t_0,c_0)$,
\item[(iii)] for $t_0\le t\le c_0$ we have $t-t_1 \le h(t) \le t - t_0$, and
\item[(iv)]  for all $t\in\R$ we have $0\le \dot h(t) \le 1$ and $2t\ddot h(t) + \dot h(t) < \lambda$.
\end{itemize}

\begin{figure}[ht]
\psset{unit=0.6cm} 
\begin{pspicture}(-1,-1)(12,7)

%
% coordinate axes
%
\psline[linewidth=0.4pt]{->}(0,0)(11,0)
\psline[linewidth=0.4pt]{->}(0,0)(0,6)

\psline[linewidth=0.8pt](6,2)(9.5,5.5)   		% the straight line to t\ge c_0
\psline[linestyle=dotted](4,0)(6,2)                           % continuation of straight line, dotted, for t_1 \le t\le c_0
\psline[linestyle=dotted](6,0)(6,2)                           % vertical part, dotted
\psecurve[linewidth=0.8pt](-2,2)(2,0)(6,2)(8,4.5)   % quadratic parabola (x-2)^2/8 from t_0 to c_0
\psline[linewidth=0.8pt](0,0)(2,0)          % h(t)=0, 0\le t\le t_0
\psdots(2,0)(4,0)(6,0)(0,0)                										% dots on t-axis at 0, t_0, t_1, c_0
\psline[linestyle=dotted](2,0)(7.5,5.5)                       % the line t-t_0    

\rput(2,-0.5){$t_0$}
\rput(4,-0.5){$t_1$}
\rput(6,-0.5){$c_0$} 
\rput(11.5,0){$t$}
\rput(-0.5,-0.5){$(0,0)$}

\rput(8,2){$h$}
\psline[linewidth=0.2pt]{->}(7.7,2.3)(7,3)

\rput(4,4){$t-t_0$}
\psline[linewidth=0.2pt]{->}(4,3.5)(4.7,2.8)

\end{pspicture}
\caption{The function $h$}
\label{h}
\end{figure}
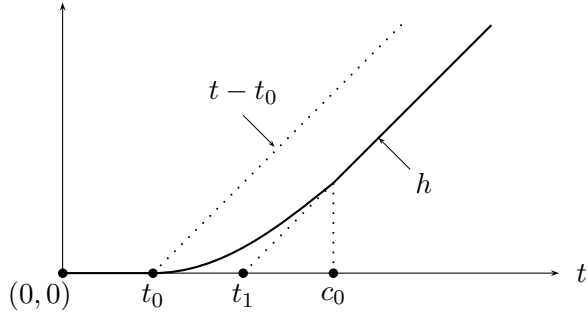

Given such $h$ we define a function 
$\wt \tau\colon \C^n=\C^r\times \R^{2s} \to\R$ by
\begin{equation}
\label{tau}
		\wt\tau(\zeta,u)= - h\bigl( |x'|^2 +|u'|^2\bigr) +  Q(y,x'',u'').
\end{equation}
Its critical locus is $\{|x'|^2+|u'|^2\le t_0,\ x''=0,\ y=0,\ u''=0\} \subset \wt E$
and the corresponding critical value is zero.  
From the property (iv) of $h$ and  \cite[Lemma 6.8]{ACTA} 
we see that $\wt \tau(\cdotp,u)$ is strongly plurisubharmonic
on $\C^r$ for every fixed value of $u\in\R^{2s}$. 
The cited lemma applies directly when $u'=0$; in general we consider the 
translated function $h_c(t)= h(t+c)$ with $c=|u'|^2>0$; 
we have $\dot h_c(t) =\dot h(t+c) \le 1< \l$ and
$2t\ddot h_c(t) + \dot h_c(t)\le 2(t+c)\ddot h(t+c) + \dot h(t+c)  < \lambda$
(we used $\ddot h\ge 0$ and the property (iv) of $h$). 
Lemma 6.8 in \cite{ACTA} now gives the desired conclusion
concerning the function $\zeta\to \wt \tau(\zeta,u)=-h_c(|x'|^2) + Q(y,x'',u'')$,
where $c=|u'|^2$. 

Comparing the definitions of $\wt \rho$ (\ref{simplified}) and $\wt \tau$ (\ref{tau}),
and taking into account the properties of $h$, we see that the 
following conditions hold:
\begin{itemize}
\item[(a)] $\wt \rho \le \wt \tau \le \wt \rho+t_1$ (since $t-t_1\le h(t)\le t$ for all $t\ge 0$), 
\item[(b)] $\wt \rho+ t_0 \le \wt \tau$ on the set $\{|x'|^2+|u'|^2 \ge t_0\}$ (from (ii) and (iii)), and
\item[(c)] $\wt \tau = \wt\rho + t_1$ on the set $\{|x'|^2 +|u'|^2 \ge c_0\}$ (from (ii)).
\end{itemize}

Let $V= \{p\in X\colon \rho(p) \le 3c_0\}$. We define a function 
$\tau \colon V \to\R$ by 
\[	
	\tau= \wt\tau\circ \phi \ \; \text{on}\ U\cap V, \quad  
  \tau = \rho + t_1 \ \text{on}\  V\bs U. 
\]  
Property (c) implies that both definitions of $\tau$ agree on the set
\[
	\{p\in U\cap V \colon |x'(p)|^2 + |u'(p)|^2 \ge c_0\}.
\]
(Here $x'(p)$ and $u'(p)$ denote the corresponding components of $\phi(p) \in\C^n$.) 
Since $\{p\in U\cap V \colon |x'(p)|^2 + |u'(p)|^2 \le c_0\}
\subset \{p\in U \colon |x'(p)|^2 + |u'(p)|^2 \le c_0,\ Q(y(p), x''(p),u''(p))\le 4c_0\}$ and the latter set is compactly contained in $U$, 
we see that $\tau$ is well defined on $V$. 
The stated properties now follow immediately. 
In particular, since $\wt \tau$ is strongly
plurisubharmonic on $\C^r \times\{u\} \subset\C^n$ $(u\in\R^{2s})$
and the coordinate map $\phi\colon U\to P$ is holomorphic on 
$\Sigma_u=\phi^{-1}(\C^r \times\{u\}) \subset U$, the restriction
of $\tau$ to each $r=(n-q+1)$-dimensional complex submanifold $\Sigma_u$ 
of $U$ is strongly plurisubharmonic. Since these submanifolds 
form a smooth nonsingular foliation of $U$ with holomorphic leaves, 
$\tau$ is $q$-convex in $U\cap V$, while on $V\bs U$ it 
is just a translate of $\rho$ by a constant.  
\end{proof}

\begin{remark}
In Lemma \ref{crossing} we exclude the case $k=0$ when 
$\rho$ has a local minimum at the critical point $p_0$ 
in the Levi-positive directions. This case need not be considered
in the proof of Theorem \ref{Main1} since the boundary of $D$
in $X$ cannot approach such a point from below during
the lifting process.
\end{remark}

%
%
%   HOLOMORPHIC SPRAYS
%
%
\section{Holomorphic sprays}
\label{spray}
In the proof of Theorem \ref{Main1} we use sprays of maps
to globalize
local corrections made near a small part of the boundary. 
For this purpose we recall from \cite{BDF1,BDF2,FFAsian} 
the relevant results concerning holomorphic sprays,
adjusting them to the applications in this paper.

%
%
% 	SPRAYS
%
%
\begin{definition}
\label{Spray}
Let $\ell\ge 2$,  $r\in\{0,\ldots,\ell\}$ and $k\in\Z_+$ be integers.
Assume that $X$ is a complex manifold, $D$ is a relatively compact 
strongly pseudoconvex domain with $\cC^\ell$ boundary in a Stein manifold $S$,
and $\sigma$ is a finite set of points in $D$.
A {\em spray of maps of class $\cA^r(D)$ with the 
exceptional set $\sigma$ of order $k$}
(and with values in $X$) is a map $f\colon \bar D\times P\to X$, where $P$ 
(the {\em parameter set} of the spray) is an open subset of 
a Euclidean space $\C^m$ containing the origin, 
such that the following hold: 
\begin{itemize}
\item[(i)]   $f$ is holomorphic on $D\times P$ and of class $\cC^r$
on $\bar D \times P$, 
\item[(ii)] the maps $f(\cdotp,0)$ and $f(\cdotp,t)$ agree 
on $\sigma$ up to order $k$ for $t\in P$, and
\item[(iii)]  for every $z\in \bar D\bs \sigma$ and $t\in P$  the map
\[
	\di_t f(z,t) \colon T_t \C^n =\C^n  \to T_{f(z,t)} X
\]
is surjective (the {\em domination property}).
\end{itemize}
We shall call $f_0=f(\cdotp,0)$ the {\em core} 
(or {\em central}\/) map of the spray $f$. 
\end{definition}

The following lemma is essentially \cite[Lemma 4.2]{BDF1} 
for the case of sprays of maps. As it is remarked in the first line of its proof
in \cite{BDF1}, the assumption $r\ge 2$ is needed only for the existence of a Stein neighborhood. 
Using \cite[Corollary  1.3]{FFAsian} instead of \cite[Theorem 2.6]{BDF1} we obtain
the same result for all $r\in\Z_+$. In \S\ref{proof} below we shall use
these results with $r=0$.

%
%
% SPRAYS EXIST
%
%
\begin{lemma}
\label{sprays-exist}
{\em (Existence of sprays)} 
Assume that $\ell$, $r$, $k$, $D$, $\sigma$ and $X$ are as in 
Definition \ref{Spray}.  Given a map 
$f_0\colon \bar D\to X$ of class $\cA^r(D)$ to 
a complex manifold $X$, there exists a spray 
$f\colon \bar D\times P\to X$ of class $\cA^r(D)$, 
with the exceptional set $\sigma$ of order $k$, such that 
$f(\cdotp,0)=f_0$.  
\end{lemma}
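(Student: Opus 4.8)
The plan is to construct the spray by composing the core map $f_0$ with a dominating family of holomorphic vector fields pulled back from the target, exactly as in \cite[Lemma 4.2]{BDF1}, but taking care to build in the vanishing-to-order-$k$ condition along the finite set $\sigma$. First I would observe that since $\bar D\times\{0\}$ maps into $X$ and $X$ can be embedded (locally near the image) appropriately, there exist finitely many holomorphic vector fields $V_1,\dots,V_m$ defined on an open neighborhood of $f_0(\bar D)$ in $X$ that span the tangent space $T_xX$ at every point $x$ in that neighborhood; in fact one may take $V_j$ to be globally defined on a Stein neighborhood of $f_0(\bar D)$ by embedding that neighborhood into some $\C^N$ and pulling back the coordinate fields via a holomorphic retraction (this is where the original proof wanted $r\ge 2$, and where we instead invoke \cite[Corollary 1.3]{FFAsian} to obtain the Stein neighborhood and the spanning fields for all $r\in\Z_+$).

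Next I would introduce, for each $j$, a scalar function $g_j\in\cA^r(D)$ (indeed one may take $g_j$ holomorphic on a neighborhood of $\bar D$, or simply in $\cO(S)$ by the Oka–Weil property since $\bar D$ is $\cO(S)$-convex) with the property that $g_j$ vanishes to order $k$ at every point of $\sigma$; a single such function $g$ suffices, e.g.\ $g=\prod_{p\in\sigma}\chi_p$ where $\chi_p\in\cO(S)$ vanishes to order $k$ at $p$ (such $\chi_p$ exist because $S$ is Stein, using that the finite set $\sigma$ is $\cO(S)$-convex and interpolation holds). Then for a parameter $t=(t_1,\dots,t_m)\in P\subset\C^m$ with $P$ a sufficiently small ball around $0$, I would define
\[
	f(z,t) = \phi^{t_1 g(z)}_{V_1}\circ \cdots \circ \phi^{t_m g(z)}_{V_m}\bigl(f_0(z)\bigr),
\]
where $\phi^s_{V_j}$ denotes the time-$s$ flow of $V_j$; shrinking $P$ guarantees that all these flows are defined on the relevant region. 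Since $f_0$ is holomorphic on $D$ and of class $\cC^r$ on $\bar D$, and the flows depend holomorphically on time and base point, $f$ is holomorphic on $D\times P$ and of class $\cC^r$ on $\bar D\times P$, giving property (i); property (ii) holds because $g(z)=0$ to order $k$ at each point of $\sigma$, so $f(z,t)=f_0(z)$ there together with its derivatives in $z$ up to order $k$, uniformly in $t$; and $f(\cdotp,0)=f_0$ is the core map.

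For the domination property (iii): at a point $(z,t)$ with $z\in\bar D\setminus\sigma$, compute $\di_t f(z,t)$ at $t=0$ in the direction $e_j$, obtaining $g(z)\,V_j(f_0(z))$, which equals $g(z)$ times the value of the $j$-th spanning field at $f_0(z)$; since $g(z)\ne 0$ for $z\notin\sigma$ (arranging, by the construction of $g$, that $g$ has no zeros on $\bar D\setminus\sigma$ — e.g.\ the $\chi_p$ can be chosen with zero set exactly $\{p\}$ on a neighborhood of $\bar D$, which is possible on a Stein manifold) and the $V_j$ span the tangent space at $f_0(z)$, the differential $\di_t f(z,0)$ is surjective. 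By continuity and after shrinking $P$ (using compactness of $\bar D\setminus(\text{small neighborhoods of }\sigma)$, and noting the behavior near $\sigma$ is harmless because (iii) is only required away from $\sigma$), surjectivity of $\di_t f(z,t)$ persists for all $(z,t)\in(\bar D\setminus\sigma)\times P$. The main obstacle I anticipate is the technical bookkeeping needed to ensure simultaneously that $g$ vanishes to order exactly $k$ along $\sigma$ yet is zero-free on $\bar D\setminus\sigma$, and that all composed flows are defined on a common parameter neighborhood $P$; both are standard — the first follows from the Stein interpolation/Oka–Weil toolkit on $S$, the second from compactness of $f_0(\bar D)$ together with the completeness of the flows of $V_j$ on a slightly larger Stein neighborhood — but they must be carried out with enough care that the resulting $f$ genuinely satisfies all three conditions of Definition \ref{Spray}.
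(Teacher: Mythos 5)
Your overall strategy---pass to a Stein neighborhood of $f_0(\bar D)$ via \cite[Corollary~1.3]{FFAsian}, pick finitely many holomorphic vector fields $V_1,\dots,V_m$ spanning $TX$ there, and compose their flows with time variables $t_jg(z)$ where $g$ kills the $k$-jets along $\sigma$---is precisely the construction the paper relies on through the citation to \cite[Lemma~4.2]{BDF1}. So the route is right, and the handling of properties (i) and (ii) is fine.

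There is, however, a genuine gap in the domination argument when $d=\dim S\ge 2$. You require a single holomorphic function $g$ that vanishes to order $k$ on $\sigma$ yet is zero-free on $\bar D\setminus\sigma$, and you justify this by taking $g=\prod_{p\in\sigma}\chi_p$ with $\chi_p$ having zero set exactly $\{p\}$ near $\bar D$. But on a Stein manifold of dimension $d\ge 2$ no nonzero holomorphic function can have an isolated zero: the zero set of any $\chi_p\not\equiv 0$ is a pure $(d-1)$-dimensional analytic hypersurface, so it necessarily meets $\bar D\setminus\sigma$. Consequently your $g$ must vanish at some $z\in\bar D\setminus\sigma$, and at such a $z$ you get $\partial_t f(z,0)=g(z)\,[V_1(f_0(z)),\dots,V_m(f_0(z))]=0$, so domination fails there. (A side remark: the "compactness of $\bar D$ minus a small neighborhood of $\sigma$" argument also does not reach points arbitrarily close to $\sigma$; the cleaner way to control the whole of $\bar D\setminus\sigma$ is to factor $\partial_t f(z,t)=g(z)\,M(z,t)$ and show $M$ has full rank uniformly for small $t$.) The fix is standard: replace the single $g$ by finitely many $g_1,\dots,g_l\in\cO(S)$ that generate, over a neighborhood of $\bar D$, the $k$-th power of the ideal sheaf of $\sigma$; such generators exist and can be taken finite on the compact $\bar D$ by Cartan's Theorem A, and having no common zeros on $\bar D\setminus\sigma$ is exactly the generation condition. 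Then take the parameter $t=(t_{ij})\in\C^{ml}$ and define $f(z,t)$ by composing the flows $\phi^{t_{ij}g_j(z)}_{V_i}$. Now $\partial_t f(z,0)$ has image spanned by $\{g_j(z)V_i(f_0(z))\}_{i,j}$, which is all of $T_{f_0(z)}X$ as soon as some $g_j(z)\neq 0$, i.e.\ for every $z\in\bar D\setminus\sigma$; the $g(z)M(z,t)$-type factoring then propagates surjectivity to all small $t$. With this change the proof goes through for all $d\ge 1$ (your version is actually correct, as written, only when $d=1$).
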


%
%
%   CARTAN PAIR
%
%
\begin{definition}
\label{Cartan-pair}
Let $\ell\ge 2$ be an integer.
A pair of open subsets $D_0,D_1 \Subset S$ in a Stein manifold $S$
is said to be a {\em Cartan pair} of class $\cC^\ell$ if 
\begin{itemize}
\item[(i)]  $D_0$, $D_1$, $D=D_0\cup D_1$ and $D_{0,1}=D_0\cap D_1$ 
are strongly pseudoconvex with $\cC^\ell$ boundaries, and 
\item[(ii)] $\overline {D_0\backslash D_1} \cap \overline {D_1\backslash D_0}=\emptyset$ 
(the separation property). 
\end{itemize}
\end{definition}

The following is the main result on gluing sprays (see \cite[Proposition 4.3]{BDF1}).
The key ingredient in the proof is a Cartan-type splitting lemma; for a simple
proof see \cite[Lemma 3.2]{FFAsian}.

%
%
%  GLUING SPRAYS
%
%
\begin{proposition}
\label{gluing-sprays}
{\em (Gluing sprays)} 
Let $(D_0,D_1)$ be a Cartan pair of class $\cC^\ell$ $(\ell\ge 2)$ in 
a Stein manifold $S$ (Def.\ \ref{Cartan-pair}). 
Set $D=D_0\cup D_1$, $D_{0,1}=D_0\cap D_1$. Let
$X$ be a complex manifold.
Given integers $r\in\{0,1,\ldots, \ell\}$, $k\in\Z_+$, 
and a spray $f\colon \bar D_0\times P_0\to X$ of class $\cA^r(D_0)$ 
with the exceptional set $\sigma$ of order $k$ such that 
$\sigma \cap \bar D_{0,1}=\emptyset$, there is an open 
set $P \ss P_0$ containing $0\in\C^n$ satisfying the following. 

For every spray $f' \colon \bar D_1 \times P_0 \to X$
of class $\cA^r(D_1)$, with the exceptional set $\sigma'$ of order $k$,
such that $f'$ is sufficiently $\cC^r$-close to $f$ on 
$\bar D_{0,1} \times P_0$ and $\sigma'\cap \bar D_{0,1}=\emptyset$, 
there exists a spray $g\colon \bar D \times P \to X$ 
of class $\cA^r(D)$, with the exceptional set $\sigma\cup \sigma'$ 
of order $k$, enjoying the following properties:
\begin{itemize}
\item[(i)] the restriction $g\colon \bar D_0\times P \to X$ 
is close to $f \colon \bar D_0\times P \to X$ in the $\cC^r$-topology
(depending on the $\cC^r$-distance of $f$ and $f'$ on $\bar D_{0,1} \times P_0$),
\item[(ii)] the core map $g_0=g(\cdotp,0)$ is homotopic to 
$f_0=f(\cdotp,0)$ on $\bar D_0$, 
and $g_0$ is homotopic to $f'_0=f'(\cdotp,0)$ on $\bar D_1$, and
\item[(iii)] $g_0$ agrees with $f_0$ up to order $k$ on $\sigma$, 
and it agrees with $f'_0$ up to order $k$ on $\sigma'$.
\end{itemize}
\end{proposition}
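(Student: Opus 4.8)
The plan is to prove Proposition \ref{gluing-sprays} by reducing the nonlinear gluing problem to a Cartan-type additive splitting and then solving an implicit equation. First I would fix a holomorphic coordinate neighborhood of the graph of the core map $f_0$ over $\bar D_{0,1}$ inside $X$ (or work locally in $X$ via a partition of the domain and coordinate charts), so that near the relevant part of $\bar D_{0,1}\times P_0$ the two sprays $f$ and $f'$ may be compared in a fixed $\C^n$-valued coordinate system. There the difference $f'-f$ is a small map of class $\cA^r(D_{0,1})$ into $\C^n$, and the domination property of $f$ lets me write, for $z\in\bar D_{0,1}$ and $t$ near $0$, the ansatz $f(z,\gamma(z,t))=f'(z,\delta(z,t))$ with $\gamma,\delta$ close to the identity in the $t$-variable; this is the standard device (as in \cite[Proposition 4.3]{BDF1}) for converting a multiplicative/compositional transition into an additive one.

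The heart of the argument is the Cartan splitting: given the transition map $c(z,t)$ measuring the discrepancy of the two sprays on $\bar D_{0,1}\times P_0$, I need to write $c = b\circ a^{-1}$ (in the appropriate group of fiber-preserving biholomorphisms close to the identity), or in the linearized/additive form $c = b - a$, with $a$ of class $\cA^r(D_0)$ and $b$ of class $\cA^r(D_1)$, both depending holomorphically on $t$ and of size controlled by $\|c\|$. For this I would invoke the bounded Cartan-type splitting lemma cited in the excerpt, namely \cite[Lemma 3.2]{FFAsian}, which provides, for a Cartan pair of class $\cC^\ell$ and any $r\in\{0,\ldots,\ell\}$, a bounded linear splitting operator on $\cA^r$; the separation property (ii) in Definition \ref{Cartan-pair} is exactly what makes this work, and the $\cC^0$ case (which is what \S\ref{proof} needs) is covered by the cited results. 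Because the splitting is linear and bounded, an iteration (or a direct application of the implicit function theorem in the relevant Banach space of $\cA^r$-maps, using that the nonlinear remainder is quadratically small) converges to exact solutions $a,b$ of the nonlinear equation, provided $\|c\|$, i.e.\ the $\cC^r$-distance of $f'$ to $f$ on $\bar D_{0,1}\times P_0$, is small enough; this is where the hypothesis ``$f'$ sufficiently $\cC^r$-close to $f$'' is used, and it also forces the shrinking of the parameter set from $P_0$ to some $P\Subset P_0$ containing $0$, since the compositions $\gamma,\delta$ are only defined and invertible for $t$ in a smaller neighborhood of the origin.

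Once $a$ and $b$ are obtained, I would define $g$ on $\bar D_0\times P$ by modifying $f$ through $a$ (a fiber-preserving automorphism close to the identity, hence $g$ is again a spray of class $\cA^r(D_0)$, dominating and agreeing with the prescribed jet on $\sigma$ to order $k$ — here one uses $\sigma\cap\bar D_{0,1}=\emptyset$ so that $a$ can be taken to fix the jets at $\sigma$, e.g.\ by multiplying the correction by a function vanishing to order $k+1$ at $\sigma$, which the splitting lemma accommodates), and on $\bar D_1\times P$ by modifying $f'$ through $b$; the splitting identity $c=b\circ a^{-1}$ guarantees these two definitions patch to a single holomorphic spray $g$ on $\bar D\times P$ with exceptional set $\sigma\cup\sigma'$. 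Properties (i) and (iii) are then immediate from the size estimates and the jet-vanishing of $a,b$; property (ii) follows because $a$ and $b$ are homotopic to the identity through fiber-preserving automorphisms (being close to it), so the core maps $g_0$ and $f_0$, resp.\ $g_0$ and $f'_0$, are homotopic on the respective domains. I expect the main obstacle to be the careful bookkeeping in the nonlinear iteration — ensuring that at each step the corrections remain small enough to stay in the domain of definition of the compositions, that the parameter set can be fixed once and for all (it must not shrink to zero), and that the jet conditions at $\sigma$ and $\sigma'$ are preserved throughout — rather than any conceptually new ingredient, since the linear machinery is entirely supplied by \cite[Lemma 3.2]{FFAsian} and the scheme is that of \cite[Proposition 4.3]{BDF1}.
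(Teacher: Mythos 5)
Your proposal is correct and follows essentially the same route the paper takes: the paper does not re-prove this proposition but cites \cite[Proposition 4.3]{BDF1} together with the bounded Cartan-type splitting lemma \cite[Lemma 3.2]{FFAsian}, and your sketch — domination yields a fiber-preserving transition map on $\bar D_{0,1}\times P_0$ close to the identity, split it multiplicatively over the Cartan pair via the bounded linear splitting plus a quadratically convergent iteration (shrinking $P_0$ to $P$), then define $g$ by composing $f$ and $f'$ with the resulting parameter reparametrizations — is exactly that argument. One small remark: you do not actually need to force the jet conditions at $\sigma$, $\sigma'$ by multiplying by functions vanishing to high order; since $\di_t^j f$ vanishes along $\sigma$ to the relevant order by the exceptional-set condition, composing only in the $t$-variable automatically preserves the $k$-jets at $\sigma$, and the hypothesis $\sigma\cap\bar D_{0,1}=\emptyset$ (resp.\ $\sigma'\cap\bar D_{0,1}=\emptyset$) is what guarantees domination, hence the transition map, is available on the overlap.
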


\begin{remark}
It follows from the proof in \cite{BDF1} that, in addition to the above, we have
$g(z,t)\in\{f'(z,s)\colon s \in P_0\}$ for each $z\in \bar D_1$ and $t\in P$.
\qed \end{remark}

%
%
%
%  PROOF OF MAIN THEOREM
%
%
%
\section{Proof of Theorem \ref{Main1}}
\label{proof}
The scheme of proof is exactly as in \cite[proof of Theorem 1.1]{BDF1}.
A holomorphic map $f\colon D\to X$ satisfying the conclusion 
of Theorem \ref{Main1} is obtained as a locally uniform limit 
$f=\lim_{j\to \infty} f_j$ in $D$ of a sequence of continuous maps 
$f_j\colon \bar D\to X$  that are holomorphic in $D$. 
At every step of the inductive construction 
we obtain the next map $f_{j+1}$ from $f_j$
by first lifting a  part of the boundary $f_j(bD)$,
lying in a local chart of $X$, to higher levels
of $\rho$, while at the same time  taking care not to drop 
the boundary substantially lower with respect to $\rho$.
The local modification, provided by Lemma \ref{MainLemma},
uses special holomorphic peak functions;
its proof  relies on the work of A.\ Dor \cite{Dor1}.
To pass a critical level of $\rho$ we use methods 
developed in \S\ref{critical} above. 

For technical reasons we work with sprays of maps (see \S\ref{spray}). 
This allows us to patch any local modification, furnished by 
Lemma \ref{MainLemma}, with the given global map $\bar D\to X$  by 
appealing to Proposition \ref{gluing-sprays} above.
When talking of sprays, we adopt the following convention:

%
%  CONVENTION ON SPRAYS
%
{\em All sprays in this section are assumed to be of class $\cA^0(D)$}, 
and (unless otherwise specified) 
{\em their exceptional set $\sigma$ of order $k$ 
equals the finite set $\sigma$ from Theorem \ref{Main1}.} 
We shall accordingly omit the phrases `of class $\cA^0(D)$' and 
`with exceptional set $\sigma$ of order $k$' 
when there is no ambiguity.

In the lifting process we have to consider two cases: 
The first is to lift the boundary of $D$ across noncritical 
levels of $\rho$, and the second is crossing a critical 
level set of $\rho$. We reduce the second case to the first 
one by using Lemma \ref{crossing}
(see Lemma \ref{bigstep} below).

For maps from strongly pseudoconvex domains to a 
Euclidean space, relevant lifting techniques using holomorphic peak functions
have been developed by several authors. The following result was proved by
A.\ Dor (see \cite[Lemma 1]{Dor1}; here we use Dor's original notation).
The term `normalized-3' indicates that the complex Hessian is 
globally bounded from below with factor $3$, that is, 
$\cL_\rho(x;v)\ge 3|v|^2$. Since we do not wish to
normalize our exhaustion function, we shall need one 
additional constant (denoted $\mu_0$ in Lemma \ref{LemmaDorL} below)
in the corresponding estimates.

%
%
%  LEMMA OF DOR
%
%
%
\begin{lemma}
\label{LemmaDor}
{\rm (A.\ Dor, \cite[Lemma 1]{Dor1})}
Assume that $N\ge 2$ and $M\ge N+1$ are integers, $\Omega$ is a domain
in $\C^M$, $\rho\colon\Omega \to\R$ is a smooth normalized-3 
plurisubharmonic function $(\cL_\rho(x;v)\ge 3|v|^2)$, $D\ss \C^N$ is 
a strongly pseudoconvex domain with smooth boundary, 
and $z_0\in bD$. Let $K_1\subset\Omega$ be a compact subset such
that $d\rho\ne 0$ on $K_1$. Then there exist 
\begin{itemize}
\item a constant $\epsilon_0\in (0,1)$ that depends only on $N$ 
and on the domain $D$, 
\item constants $\gamma_0\in (0,1)$ and $C>1$ that depend only on
$K_1$, $\rho$ and $\Omega$, and 
\item
a neighborhood $U\subset bD$ of $z_0$ that depends only on $z_0$ 
and $D$, 
\end{itemize}
such that the following hold.
Given a smooth map $f\colon\bar D \to \Omega$ that is holomorphic on $D$, 
a compact subset $K\subset D$, a number $\e>0$, and a continuous map 
$\gamma\colon bD\to (0,\gamma_0]$, there is a smooth map 
$g\colon\bar D \to \C^M$ that is holomorphic on $D$ and enjoys
the following properties:
\begin{itemize}
\item[(i)] $f(z)+g(z)\in \Omega$ for $z\in \bar D$,
\item[(ii)] $C|g(z)|^2+\e>\rho((f+g)(z))-\rho(f(z))>|g(z)|^2-\e$ for $z\in \bar D$,
\item[(iii)] $|g(z)|>\e_0\gamma(z)$ for $z\in U\cap f^{-1}(K_1)$,
\item[(iv)] $|g(z)|<\e_0^{-1}\gamma(z)$ for $z\in bD$, and
\item[(v)] $|g(z)|<\e$ for $z\in K$.
\end{itemize}
\end{lemma}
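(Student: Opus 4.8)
The plan is to follow the scheme of Dor \cite{Dor1}, whose antecedents are the inner-function constructions of Hakim and Sibony \cite{HakimSibony} and of L\o w \cite{Low1}: the correction $g$ will be built as a product of a scalar holomorphic \emph{peak-and-bump} function on $\bar D$, carrying the prescribed pointwise size $\gamma$ concentrated in a small boundary neighborhood of $z_0$, times a $\C^M$-valued holomorphic factor that points (approximately) along the complex tangent directions of the level sets of $\rho$ at $f(z)$. In this way the first variation of $\rho$ along $g$ becomes negligible, and the gain of $\rho$ is controlled from below and above by the strictly positive, uniformly bounded complex Hessian of $\rho$.

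First I would localize: after shrinking, straighten $bD$ biholomorphically near $z_0$ so that a piece of $\bar D$ around $z_0$ is a piece of the boundary of a standard strongly convex domain. This supplies, for every large parameter $t$, a holomorphic function $\varphi_t$ on $\bar D$ with $\varphi_t(z_0)=1$, $|\varphi_t|<1$ on $\bar D\setminus\{z_0\}$, Gaussian decay $|\varphi_t(z)|\le e^{-ct|z-z_0|^2}$ near $z_0$, and $|\varphi_t(z)|\le e^{-t\delta}$ outside a fixed neighborhood $U\subset bD$ of $z_0$, where $c$, $\delta$, and hence $\epsilon_0$ and $U$, depend only on $N$ and $D$. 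A scalar factor built from $\gamma$ and a high power of $\varphi_t$ then has modulus $\approx\gamma$ on $U\cap f^{-1}(K_1)$, $<\epsilon_0^{-1}\gamma$ on $bD$, and $<\e$ on $K$ --- the skeleton of (iii), (iv), (v).

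For the estimate (ii) (and for (i)) I would Taylor-expand $\rho$ along $g$ on the compact subset of $\Omega$ swept out by $f+g$, which lies in a fixed $\e$-neighborhood of the compact set $f(\bar D)$: modulo a cubic remainder one gets the first-order term $2\,\mathrm{Re}\langle\di\rho(f(z)),g(z)\rangle$, the complex Hessian $\cL_\rho(f(z);g(z))$, and the holomorphic-Hessian term. The hypothesis $\cL_\rho\ge 3|\cdot|^2$ makes the Hessian dominate $|g(z)|^2$ plus the holomorphic-Hessian term plus the cubic error once $\gamma_0$ is small, giving the lower bound in (ii); a uniform Hessian bound on that compact set produces $C$ and the upper bound. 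What makes this work --- and what I expect to be the main obstacle --- is that the first-order term must be $o(|g(z)|)$, not merely $O(|g(z)|)$; this is precisely the demand that $g(z)$ be nearly complex tangential to $\{\rho=\text{const}\}$ at $f(z)$, which is where the hypothesis $d\rho\ne0$ on $K_1$ enters.

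Since $f$ is only continuous on $\bar D$ (holomorphic on $D$), one cannot holomorphically project onto $\ker\di\rho(f(z))$, so the near-tangential direction cannot be produced naively; this is the hard part. The resolution is Dor's: instead of forcing $\langle\di\rho(f(z)),g(z)\rangle$ to vanish, one assembles $g$ from several suitably ``rotated'' bumps so that the circular averages over $bD$ of $\mathrm{Re}\langle\di\rho\circ f,\,g\rangle$ cancel, and then uses plurisubharmonicity of $\rho$ itself (rather than just its quadratic Taylor form) to absorb the residue into the strictly positive Hessian. Once $g$ is constructed this way, properties (i)--(v) follow by reading off the peak estimates, with $\epsilon_0$ coming from the loss in the peak inequalities (hence from $N$ and $D$), $\gamma_0$ from the radius of validity of the Taylor estimate near $K_1$, and $C$ from the Hessian bound of $\rho$ on the relevant compact set.
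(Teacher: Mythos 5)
The paper does not prove this lemma: it is stated with attribution as Dor's Lemma~1 from \cite{Dor1}, and the authors explicitly say in \S\ref{proof} that ``Lemma \ref{LemmaDorL} is what Dor actually proved in \cite{Dor1}, and hence it does not require a proof.'' The paper only summarizes Dor's proof in three steps --- (1) choosing a good system of peak functions near $z_0$, giving the universal constant $\e_0$ depending only on $N$ and the local geometry of $bD$; (2) constructing a local correction map near $z_0$ with the stated properties; (3) patching the local correction with the global map by solving a $\dibar$-equation on $\bar D$. Your reconstruction captures steps (1) and (2) well: you correctly identify the role of the peak function, its Gaussian decay rate supplying $\e_0$ and $U$, the Taylor expansion of $\rho$ along $g$ with the Hessian bound $\cL_\rho\ge 3|\cdot|^2$ and a uniform upper bound producing $C$, and --- most importantly --- the genuine difficulty, namely that the first-order term $2\,\mathrm{Re}\langle\di\rho(f(z)),g(z)\rangle$ must be made $o(|g(z)|)$ even though one cannot holomorphically project onto $\ker\di\rho(f(z))$.

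Where your sketch goes off is the description of \emph{how} Dor tames the first-order term and the omission of the globalization step. The phrase ``rotated bumps so that circular averages over $bD$ cancel'' describes the Hakim--Sibony/L\o w randomization device, which is what one uses to keep the \emph{modulus} of an inner-function-type sum from collapsing; Dor's lemma does not rely on statistical cancellation. Dor's device is deterministic and local: one covers a boundary neighborhood of $z_0$ by small pieces centered at points $z_i$, picks for each a \emph{constant} vector $v_i\in\C^M$ with $\langle\di\rho(f(z_i)),v_i\rangle=0$ (here $M\ge N+1$ is used to guarantee such a direction exists in $\ker\di\rho$), and sets the local correction on each piece to be a concentrated peak function times $v_i$. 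The first-order term is then small \emph{pointwise} by continuity of $\di\rho\circ f$ together with the rapid decay of the peak away from $z_i$ --- no averaging is involved. The pieces are then merged into a single map holomorphic on $D$ by a bounded solution of a $\dibar$-equation on $\bar D$; this is the third step the paper attributes to Dor and which your sketch leaves implicit. One small inaccuracy: $f$ is smooth on $\bar D$ by hypothesis, not ``only continuous''; the obstruction to a holomorphic projection onto $\ker\di\rho(f(z))$ is that $\di\rho$ itself is not holomorphic, independent of the boundary regularity of $f$.
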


Although Dor stated this result 
only for strongly plurisubharmonic functions $\rho$, 
in the final pages of his paper he also proved it for $q$-convex functions with $q\le M-N$; he called such functions `locally $(N+1)$-dimensional plurisubharmonic'.
The proof in \cite{Dor1} is split into three parts. 
In the first part the author chooses a good system of peak functions 
near $z_0$ and obtains a constant $\epsilon_0>0$ that depends only 
on the geometry of the boundary $bD$ near the chosen point $z_0\in bD$, 
but is independent of the target domain $X$ and of the function $\rho$. 
In the second step he constructs a local correction map
that is defined in a neighborhood $U$ of $z_0$ and enjoys  
the stated properties on $U$. In the last step this local map 
is patched with the original global map by solving a $\dibar$-equation 
on $\bar D$.

Since in our case $X$ is a manifold (and not a Euclidean space as in \cite{Dor1}), 
we perform Dor's corrections on small pieces of $\bar D$ near the boundary of $bD$
that are mapped to local charts of $X$, and then glue 
these corrections with the initial spray by the methods
explained in \S\ref{spray}. So we only need the following 
{\em local version of Lemma \ref{LemmaDor}} (before globalization).
We adjust the notation to the one used in the remainder of this section,  
writing $p$ instead of $z_0$, $\e_p$ instead of $\e_0$, and $g$ instead of $f+g$. 
We emphasize that Lemma \ref{LemmaDorL} is what Dor actually proved in 
\cite{Dor1}, and hence it does not require a proof.

\begin{figure}[ht]
\psset{unit=0.5cm, linewidth=0.7pt} 

\begin{pspicture}(-8,-3.5)(8,5.5)

\psarc[linewidth=1pt](0,-8){9}{40}{72}
\psarc[linewidth=1pt](0,-8){9}{97}{140}
\psarc[linestyle=dashed,dash=2pt 1.5pt](0,-8){9}{72}{97}

\psdot[dotsize=3.5pt](0,1)
\rput(0,1.5){$p$}

\rput(5.2,-1.5){$D$}
\rput(1,0){$D_1$}
\rput(-4.8,3){$U_p$}
\rput(-2.7,3){$V_p$}
\rput(1,2.2){$C$}
\psline[linewidth=0.2pt]{<-}(1,1)(1,1.8)

\psecurve[linewidth=1pt](0,-4)(-1.4,0.87)(-1.8,0)(0,-1)(2,-1)(3.5,-0.3)(3,0.47)(1,-1)

\pscircle[linestyle=dotted,linewidth=1pt](0,1){2.8}
\pscircle[linestyle=dotted,linewidth=1pt](0,1){4.5}

\end{pspicture}

\caption{The sets in Lemma \ref{LemmaDorL}}
\label{Lemma5.2}
\end{figure}
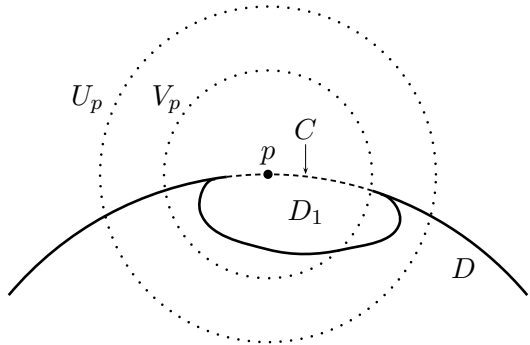

%
%
%
% LEMMA OF DOR: THE LOCAL VERSION 
%
%
\begin{lemma}
\label{LemmaDorL}
Let $d\ge 2$, $n\ge d+1$, and $q\le n-d$ be integers. 
Assume that $\omega$ is a domain in $\C^n$,
$\rho\colon\omega \to\R$ is a smooth $q$-convex function, 
$K_\omega$ is a compact subset of $\omega$ such that 
$d\rho\ne 0$ on $K_\omega$, $D\ss \C^d$ is a domain with smooth boundary, 
and $p\in bD$ is a strongly pseudoconvex boundary point of $D$.  
Then there exist 
\begin{itemize}
\item[(a)] small balls $V_p\Subset U_p$ in $\C^d$ 
such that $p\in V_p$, 
\item[(b)] a constant $\epsilon_p\in (0,1)$ 
depending only on $U_p\cap bD$, 
\item[(c)]  a constant $\mu_0>0$ depending only on $\rho$ and $K_\omega$, and 
\item[(d)] a constant $\gamma_0\in (0,1)$ 
depending on $U_p\cap bD$, $\rho$ and $K_\omega$,
\end{itemize}
such that the following hold.
Given an open subset $D_1\subset U_p\cap D$, 
an open subset $C$ of $bD$ contained in $V_p$ such that 
${\rm dist}_{\C^d}(\overline C, bD_1 \bs bD) >0$,
a smooth map $f\colon \bar D_1 \to \omega$ that is holomorphic on $D_1$,
a number $\e>0$, and a continuous map $\gamma\colon bD\cap bD_1\to (0,\gamma_0]$,
there is a smooth map $g\colon \bar D_1 \to \omega$ that is
holomorphic on $D_1$ and enjoys the following properties:
\begin{itemize}
\item[(i)] $\rho(g(z))-\rho(f(z)) >\mu_0|g(z)-f(z)|^2-\e$ for $z\in \bar D_1$,
\item[(ii)] $|g(z)-f(z)|>\e_p\gamma(z)$ for $z\in C\cap f^{-1}(K_\omega)$,
\item[(iii)] $|g(z)-f(z)|<\e_p^{-1}\gamma(z)$ for $z\in bD\cap bD_1$, and
\item[(iv)] $|g(z)-f(z)|<\e$ for points $z\in \bar D_1$ such that 
${\rm dist}_{\C^d}(z,C)>\e$.
\end{itemize}
\end{lemma}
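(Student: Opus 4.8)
The plan is to deduce Lemma~\ref{LemmaDorL} directly from Lemma~\ref{LemmaDor} (i.e.\ \cite[Lemma~1]{Dor1}) together with the remark, recorded just before the statement of Lemma~\ref{LemmaDorL}, that Dor in fact proved the \emph{local} version of his result (the first two parts of the three-part argument in \cite{Dor1}) \emph{before} the global patching step, and that he proved it for $q$-convex ($=$ locally $(N{+}1)$-dimensional plurisubharmonic) functions with $q\le M-N$. Since Lemma~\ref{LemmaDorL} is explicitly asserted to be ``what Dor actually proved,'' the task is mainly one of translating notation: I would set $N=d$, $M=n$, identify $\Omega$ with $\omega$, and note that the hypothesis $q\le n-d$ here is exactly Dor's condition $q\le M-N$. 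First I would recall that Dor's first step produces the point-dependent data $\epsilon_p\in(0,1)$ and a boundary neighborhood of $p$ — depending only on the local CR geometry of $bD$ near $p$, hence on $U_p\cap bD$ — from which one reads off the balls $V_p\Subset U_p$ in item~(a) and the constant $\epsilon_p$ in item~(b); these are the objects called $\epsilon_0$ and $U$ in Lemma~\ref{LemmaDor}.

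Next I would address the one genuine discrepancy between the two statements: Lemma~\ref{LemmaDor} assumes $\rho$ is ``normalized-3,'' meaning $\cL_\rho(x;v)\ge 3|v|^2$, and yields estimate (ii) with the clean lower bound $|g(z)|^2-\e$, whereas Lemma~\ref{LemmaDorL} allows an arbitrary $q$-convex $\rho$ and compensates with the extra constant $\mu_0>0$ in estimate (i). The point is that on the compact set $K_\omega$ the Levi form of $\rho$ in its positive directions is bounded below by some $3\mu_0>0$ (after shrinking to a neighborhood of $K_\omega$ we may assume $\cL_\rho(x;v)\ge 3\mu_0|v|^2$ along the relevant subspace); rescaling $\rho$ by $1/\mu_0$ makes it normalized-3 in the sense Dor needs, applying his construction to $\rho/\mu_0$ gives the inequality $\rho(g)/\mu_0-\rho(f)/\mu_0 > |g-f|^2-\e'$ on $\bar D_1$, and multiplying back through by $\mu_0$ (absorbing $\mu_0\e'$ into a fresh $\e$) yields exactly (i). Thus $\mu_0$ depends only on $\rho$ and $K_\omega$, as claimed in item~(c). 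The constant $\gamma_0\in(0,1)$ and estimates (ii)--(iv) then come over verbatim from Dor's local construction: (ii) is his peak-function lower bound on the patch $C$ (he needs $C\Subset V_p$ and the separation ${\rm dist}_{\C^d}(\overline C, bD_1\setminus bD)>0$ precisely so that the cutoff used to localize the correction does not interfere with the forced growth along $C$), (iii) is the boundary upper bound $|g-f|<\epsilon_p^{-1}\gamma$, and (iv) is the statement that the correction is $\cC^0$-small away from an $\e$-neighborhood of $C$.

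Concretely, the steps in order are: (1) restate Dor's local construction (parts 1--2 of \cite{Dor1}) in the case $N=d$, $M=n$, with $q$-convex $\rho$, $q\le n-d$, extracting the patch-independent data $V_p\Subset U_p$, $\epsilon_p$; (2) shrink $\omega$ to a neighborhood of $K_\omega$ on which $\cL_\rho$ has a uniform lower bound $3\mu_0>0$ in its positive eigendirections, so that $\rho/\mu_0$ is normalized-3 there, and fix $\mu_0$; (3) apply Dor's local map construction to $\rho/\mu_0$ with the given $f$, $D_1$, $C$, $\e$, $\gamma$, obtaining the correction $g$ on $\bar D_1$ holomorphic on $D_1$ with Dor's local properties; (4) rescale estimate (ii) of Lemma~\ref{LemmaDor} back to $\rho$ to obtain (i), and transcribe (iii)--(v) of Lemma~\ref{LemmaDor} into (ii)--(iv) of Lemma~\ref{LemmaDorL}, checking the domains on which each holds ($C\cap f^{-1}(K_\omega)$, $bD\cap bD_1$, and the complement of an $\e$-neighborhood of $C$ inside $\bar D_1$). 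Since the statement is, by the paper's own admission, precisely what Dor established, I do not expect a serious obstacle; the only point requiring care is the normalization/rescaling bookkeeping of step~(2)--(4), namely making sure $\mu_0$ is pinned down using only $\rho$ and $K_\omega$ (not the patch or the map $f$), and that shrinking $\omega$ to arrange the uniform Levi lower bound costs nothing because all hypotheses and the conclusion concern only behavior near $K_\omega$ and near $bD$. I would therefore present the argument as a short reduction rather than a re-proof, citing \cite{Dor1} for the actual construction of the peak functions and the local correction map.
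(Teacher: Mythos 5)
Your proposal takes essentially the same route as the paper, which in fact gives no proof at all: the authors simply note that Lemma~\ref{LemmaDorL} ``is what Dor actually proved'' (the first two, local, steps of his three-step argument in \cite{Dor1}, carried out there also for $q$-convex functions with $q\le M-N$), and that the only change from Lemma~\ref{LemmaDor} is the extra constant $\mu_0$ introduced because they do not normalize $\rho$. Your rescaling bookkeeping in steps (2)--(4) correctly fills in the detail the paper omits — namely how $\mu_0$ absorbs the ``normalized-3'' hypothesis — and your reading of how (ii)--(iv) of Lemma~\ref{LemmaDorL} correspond to Dor's local estimates (peak-function lower bound on $C$, boundary upper bound, smallness away from an $\e$-neighborhood of $C$) matches the paper's intended interpretation.
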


The main sets in Lemma \ref{LemmaDorL} are illustrated on Figure \ref{Lemma5.2},
with $C$ shown as the dashed arc on $bD \cap bD_1$.

Using Lemma \ref{LemmaDorL} we now prove 
our main modification lemma for the noncritical case.
Note that $f_0$ always denotes the core map of a spray $f$.
% In our application of Lemma \ref{MainLemma}, $\Omega$ will
% be a union of connected component of $X\bs K$ for some compact set $K$
% in $X$ (as in Theorem \ref{Main1}), and $\rho>0$ will be an exhaustion 
% function on $\Omega$.

%
%
%   MAIN LEMMA
%
%
\begin{lemma}
\label{MainLemma}
Let $d\ge 2$, $n\ge d+1$ and $k\ge 0$ be integers.         
Assume that $X$ is an $n$-dimensional complex manifold endowed
with a complete metric ${\rm dist}$, $\Omega$ is an open subset of $X$,
and $\rho\colon\Omega\to \R$ is a smooth function 
such that for a pair of real numbers 
$c_1<c_2$ the set 
\[
	\Omega_{c_1,c_2}= \{x\in\Omega\colon c_1\le \rho(x)\le c_2\}
\]
is compact, $d\rho\ne 0$ on $\Omega_{c_1,c_2}$, and  the Levi form 
$\cL_\rho$ of $\rho$ has at least $d+1$ positive eigenvalues at every point 
of $\Omega_{c_1,c_2}$.

Let $D\Subset S$ be a smoothly bounded, strongly pseudoconvex domain in a 
Stein manifold $S$ of dimension $d$ 
and let $\sigma$ be a finite set of points in $D$.
Choose real numbers $c'_1,c'_2$ such that $c_1<c'_1<c'_2< c_2$.
Then there is a number $\delta>0$ with the following property.
Given a number $c\in[c'_1,c'_2]$, a compact set $K_D\subset D$, and a spray of maps 
$f\colon\bar D\times P\to X$ with the exceptional set $\sigma$ of order $k$
such that  
\[
	f_0(z)\in\Omega\ (\forall z\in\overline{D\bs K_D}),\quad  
  \rho(f_0(z))>c-\delta\  (\forall z\in bD), 
\]
there exist for each $\e>0$ 
an open set $P_0 \ss P$ containing the origin and a spray 
$g\colon\bar D\times P_0\to X$ with the exceptional set 
$\sigma$ of order $k$ such that
\begin{itemize}
\item[(i)]  $g_0(z)\in \Omega$ and  $\rho(g_0(z))>c+\d$ for $z\in bD$,
\item[(ii)] $g_0(z)\in \Omega$ and  
$\rho(g_0(z))>\rho(f_0(z))-\e$ for $z\in \overline{D\bs K_D}$,
\item[(iii)] ${\rm dist }(f_0(z),g_0(z))<\e$ for $z\in K_D$, and
\item[(iv)] the maps $f_0$ and $g_0$ have the same $k$-jets at every point in $\sigma$,
and $g_0$ is homotopic to $f_0$ relative to $\sigma$.
\end{itemize}
\end{lemma}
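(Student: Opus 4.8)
The plan is to build $g$ from $f$ by a finite sequence of local corrections, each obtained by invoking Lemma \ref{LemmaDorL} on a small boundary piece and then gluing the resulting spray back onto the global spray via Proposition \ref{gluing-sprays}. First I would fix a finite cover of $bD$ by pairs of balls $V_{p_1}\Subset U_{p_1},\ldots, V_{p_N}\Subset U_{p_N}$ provided by Lemma \ref{LemmaDorL} (applied in local holomorphic charts of $X$ around points of $f_0(bD)$, using that $\Omega_{c_1,c_2}$ is compact with $d\rho\neq 0$ and $\cL_\rho$ having $\ge d+1$ positive eigenvalues there, so the $q$-convexity hypothesis $q\le n-d$ is met), together with the associated constants $\e_{p_i}\in(0,1)$, $\mu_0>0$, $\gamma_0\in(0,1)$. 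From these one extracts the number $\delta>0$ in the statement: it must be small enough (depending only on $D$, the $\e_{p_i}$, $\mu_0$, and the gap $c_2-c_1$, \emph{not} on $c\in[c_1',c_2']$, $K_D$, or $f$) that $2N$ successive applications of Dor's estimate (i), each raising $\min_{bD}\rho\circ(\text{core})$ by a definite amount proportional to $\e_{p_i}^2\gamma_0^2$ on the part of $bD$ being peaked, while losing at most a controlled amount elsewhere, net out to a gain of at least $2\delta$. This is the standard bookkeeping from \cite[proof of Theorem 1.1]{BDF1}, and the role of property (iv) of Lemma \ref{LemmaDorL} is to confine the damage of each step to an $\e$-neighborhood of the patch $C$.

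Next I would run the induction over the patches. At stage $i$ I have a spray $f^{(i)}\colon\bar D\times P_i\to X$ whose core satisfies $\rho\circ f^{(i)}_0>c-\delta+$ (accumulated gain) on $bD$. I choose a Cartan pair $(D_0,D_1)$ with $D_1\subset U_{p_i}\cap D$ a small cap at $p_i$, $D_0$ the complement of a slightly smaller cap, $C\subset bD\cap bD_1$ contained in $V_{p_i}$ covering the yet-uncorrected portion of $bD$ near $p_i$, and arrange $\sigma\cap\bar D_{0,1}=\emptyset$ (harmless since $\sigma\Subset D$). On $\bar D_1$, working in the chart, I apply Lemma \ref{LemmaDorL} to $f^{(i)}_0|_{\bar D_1}$ with a suitable continuous $\gamma\colon bD\cap bD_1\to(0,\gamma_0]$ (supported where we want to push up) to get a corrected map, then thicken it to a spray $f'\colon\bar D_1\times P_i\to X$ by Lemma \ref{sprays-exist}; by choosing the correction $\cC^0$-small on $\bar D_{0,1}$ (property (iv), since $\bar D_{0,1}$ is at positive distance from $C$) I make $f'$ as $\cC^0$-close to $f^{(i)}$ on $\bar D_{0,1}\times P_i$ as Proposition \ref{gluing-sprays} requires, and glue to obtain $f^{(i+1)}\colon\bar D\times P_{i+1}\to X$. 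The gluing preserves, up to an arbitrarily small error I can absorb, the estimates (i)--(iii) of Dor on $D_1$ and leaves $f^{(i)}$ essentially unchanged on $D_0\setminus D_1$; it also preserves the $k$-jet at $\sigma$ and the homotopy class of the core (Proposition \ref{gluing-sprays}(ii)--(iii)). After $N$ such steps every point of $bD$ has been pushed above level $c+\delta$, giving (i); property (i) of Lemma \ref{LemmaDorL} (the lower bound $\rho(g)-\rho(f)>\mu_0|g-f|^2-\e$) guarantees that nowhere on $\overline{D\setminus K_D}$ does $\rho$ of the core drop by more than a total of $\e$, which is (ii), while (iv) of Lemma \ref{LemmaDorL} combined with making each $\e$ in the induction geometrically small gives (iii) on $K_D$; (iv) of the present lemma follows from Proposition \ref{gluing-sprays}(ii)--(iii).

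The main obstacle is the uniformity of $\delta$: it must be chosen \emph{before} $c$, $K_D$, $f$ are given, so the per-step gain $\e_{p_i}\gamma_0$ (from (ii) of Lemma \ref{LemmaDorL}) and the per-step loss must be estimated by constants depending only on the fixed data. The loss is the delicate direction — one must verify that away from the peaked set $C$ the correction, though possibly comparable to $\gamma_0$ in size near the boundary, costs at most $C|g-f|^2$ in $\rho$ by estimate (i), and that this quadratic cost, integrated over the finitely many steps with the $\gamma$'s chosen so that $\sum$ of the squared sizes stays below a fixed bound, is dominated by the linear gain. Here the fact that $\mu_0$ in Lemma \ref{LemmaDorL}(i) depends only on $\rho$ and $K_\omega$ (not on $D_1$ or $f$), and that $\e_{p_i}$ depends only on $U_{p_i}\cap bD$, is exactly what makes the argument go through; keeping careful track that the accumulated $\e$-errors from the $N$ gluings can be taken $\le\delta$ as well completes the choice. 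Apart from this, everything is a routine transcription of the one-step inductive mechanism of \cite{BDF1} to the present $q$-convex, manifold-valued setting.
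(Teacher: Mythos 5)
Your proposal captures the right local-to-global mechanism (Dor's peak-function correction in a chart, then gluing of sprays), but it misses a structural feature of the actual argument that is essential, and as written it would not yield the conclusion.

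The correction supplied by Lemma \ref{LemmaDorL} is \emph{doubly} local: local on the source (a boundary patch $U_p\cap D$) \emph{and} local on the target (a coordinate chart $\omega\subset\C^n$, with the guaranteed gain in $\rho$ only on $C\cap f^{-1}(K_\omega)$, i.e.\ only where the image lands in the fixed compact $K_\omega$ inside the chart). Your induction is a \emph{single} loop over a cover $\{V_{p_i}\}$ of $bD$, with the target chart chosen ad hoc at each step ``around points of $f_0(bD)$.'' This has two problems. First, the constants $\gamma_0,\mu_0,\delta$ then depend on $f_0$ through the choice of charts, whereas the lemma requires $\delta$ to be fixed before $f$ is given; the paper avoids this by covering the fixed compact slab $\Omega_{c-\delta,c+2\delta}$ (not $f_0(bD)$) by finitely many charts $h_1,\dots,h_N$ of $X$, and by taking all of $\gamma_i^j,\mu_i^j$ over the product of this chart family with a fixed boundary cover $\{V_j\}_{j=1}^M$. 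Second, and more fundamentally, one pass over the boundary patches cannot raise $\rho$ on all of $bD$: a patch $V_{p_i}$ may have image under the current core map straddling several charts, and Dor's lower bound only lifts the part landing inside one fixed $K_\omega$. The paper therefore runs a \emph{nested} loop --- an outer iteration over the $N$ target charts $h_i$ (Sublemma \ref{sublemma} is the content of one outer step), and inside each outer step an inner iteration over all $M$ boundary patches --- together with the drift control (iii$''$) which guarantees that a boundary point originally mapped into $h_i(\tfrac14\B^n)$ is still inside $h_i(\tfrac12\B^n)$ when its turn comes at outer stage $i$. Without this second layer of iteration and the accompanying drift bookkeeping, there is no reason why every boundary point near level $c$ should ever be peaked, so conclusion (i) does not follow.

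A secondary but real issue is your proposal to obtain the local spray $f'$ on $\bar D_1$ by applying Lemma \ref{sprays-exist} to the corrected core map $g$. Proposition \ref{gluing-sprays} requires $f'$ and the global spray $f^{(i)}$ to be $\cC^0$-close on $\bar D_{0,1}\times P_0$ as \emph{sprays}, not merely at $t=0$. A spray freshly produced by Lemma \ref{sprays-exist} has parameter dependence unrelated to that of $f^{(i)}$, so this closeness fails for $t\neq 0$. The paper instead defines $f'(z,t)=h_i\bigl(g(z)+A_{i,j}(z,t)\bigr)$ with $A_{i,j}(z,t)=h_i^{-1}\circ f^j(z,t)-h_i^{-1}\circ f^j(z,0)$, i.e.\ it transplants the original spray's $t$-dependence onto the corrected core, which forces the required $\cC^0$-closeness on the overlap by Dor's estimate (iv). You would need a similar device to make the gluing step legitimate.
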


\begin{proof}
We first explain the main idea.
Lemma \ref{LemmaDorL} provides a local step on both sides --- 
locally with respect to the boundary $bD$,
and locally on the level set of $\rho$ in $X$. 
In every step of the inductive construction  we lift the part of 
the image of the boundary $bD$ that lies in a small 
coordinate neighborhood to higher levels of the exhaustion function 
$\rho$ (see Sublemma \ref{sublemma}).
In finitely many such steps we push the image 
of the boundary outside a certain bigger sublevel set of $\rho$.
Each step in the construction consists of finitely many substeps,
and at each substep we only make corrections on the part of 
the boundary lying in a suitable coordinate neighborhood in $S$. 
Sprays are used at every substep to patch the local correction 
with the previous global map. 

Now to the details. Let $\B^n$ denote the open unit ball in $\C^n$, 
and let $s\B^n$ denote the ball of radius $s >0$. 
Fix a number $c\in [c'_1,c'_2]$. We shall find a number $\delta>0$ 
satisfying the conclusion of Lemma \ref{MainLemma} for this value of $c$. 
It will be clear from the construction that $\delta$ can be 
chosen uniformly for all $c'$ sufficiently close to $c$, 
and hence (by compactness) for all $c\in[c'_1,c'_2]$.

Since the level set $\{\rho=c\}$ is compact, 
there are finitely many holomorphic coordinate maps 
$h_i\colon\tfrac54\B^n\to X$ $(i=1,\ldots,N)$ such that
\[
	\{\rho=c\}\subset \bigcup_{i=1}^{N} h_i(\tfrac14\B^n) \subset 
	\bigcup_{i=1}^{N} h_i(\tfrac54\B^n) \subset \Omega_{c_1,c_2}.
\]
For each point $p\in bD$ one can choose local holomorphic 
coordinates in $S$, and in this coordinate patch we obtain the sets 
$U_p$, $V_p$ and a constant $\e_p$ as in Lemma \ref{LemmaDorL} (parts (a) and (b)).
Choose open coverings $\{V_j\}_{j=1}^M$ and $\{U_j\}_{j=1}^M$
of $bD$ such that each pair $V_j\Subset U_j$ 
corresponds to $V_{p_j} \Subset U_{p_j}$ for some point $p_j\in bD$,
and $U_j\cap \sigma=\emptyset$ for each $j$. We also obtain the 
corresponding numbers $\e_j=\e_{p_j}>0$. 

Let $\e_0=\min\{\e_1,\ldots,\e_M\}>0$. 
Using Lemma \ref{LemmaDorL} (parts (c) and (d)) 
for the data $\bar D\cap U_j$ (in the local coordinates), 
$\omega=\tfrac54\B^n$, $K_\omega=\overline \B^n$, and with $\rho$ 
replaced  by the function $\rho\circ h_i$, we also obtain constants 
$\gamma_i^j$ and $\mu_i^j$ for $i=1,\ldots,N$ and $j=1,\ldots,M$ 
(these correspond to $\gamma_0$, resp.\ to $\mu_0$, 
in Lemma \ref{LemmaDorL}). 
Choose constants $\alpha>0$ and $\beta>0$ such that 
the following hold for  $i=1,\ldots,N$:
\begin{align}
	w,w'\in\overline \B^n \Longrightarrow {\rm dist}(h_i(w),h_i(w')) 
			&\le \alpha|w-w'|,\label{defalpha}\\
	\bigl( x\in h_i(\tfrac78\B^n),\ x'\in X,\ {\rm dist}(x,x')<M\beta \bigr)
	&  \Longrightarrow \label{defLambda}\\
	\bigl( x'\in h_i(\B^n),\ |h_i^{-1}(x)&-h_i^{-1}(x')| < \tfrac1{8N} \bigr). \nonumber
\end{align}
Set 
\begin{equation}
\label{gamma}
	\gamma=\min\{\gamma_i^j,\tfrac{\epsilon_0\beta}{3\,\alpha}\}>0, 
	\qquad \mu=\min\{\mu_i^j\}>0,
\end{equation}	
the minima being taken over all indices $i=1,\ldots,N$  and  $j=1,\ldots,M$.
(This choice of $\gamma$ insures that our correction is  small 
compared to the size of $\omega$.)
Finally, we choose a number $\delta$ such that  
\begin{equation}
\label{delta}
	0<\delta<\tfrac13 \mu\e_0^2\gamma^2,  \qquad                          
	\Omega_{c-\delta,c+2\d} \subset 
	\bigcup_{i=1}^{N}h_i\left(\tfrac14\B^n\right).                       
\end{equation}
We shall prove that this $\delta$ satisfies Lemma \ref{MainLemma}.
We need the following

%
%
%  THE SUBLEMMA
%
%
%
\begin{sublemma} 
\label{sublemma}
Fix an index $i\in\{1,2,\ldots, N\}$.
Given a spray of maps $f'\colon\bar D\times P\to X$,  
with the exceptional set $\sigma$ of order $k$ and such that
$f_0'(z)\in\Omega$ for $z\in \overline{D\bs K_D}$, there exist
for each $\e'>0$ an open set $P' \ss P$ containing the origin 
and a spray of maps $g'\colon\bar D\times P'\to X$ 
with the exceptional set $\sigma$ of order $k$ such that
$g'_0(z)\in \Omega$ for $z\in \overline{D\bs K_D}$ and
the following hold:
\begin{itemize}
\item[(i')] $\rho(g'_0(z))>\rho(f'_0(z))+ 3\d-\e'$   
for $z\in bD$ such that $f'_0(z)\in h_i(\tfrac 12\B^n)$,
\item[(ii')] $\rho(g'_0(z))>\rho(f'_0(z))-\e'$ for $z\in \overline{D\bs K_D}$,
\item[(iii')] ${\rm dist }(f'_0(z),g'_0(z))<M\beta$ for $z\in \bar D$, 
\item[(iv')] ${\rm dist }(f'_0(z),g'_0(z))<\e'$ for $z\in K_D$, and
\item[(v')] the maps $f'_0$ and $g'_0$ have the same $k$-jets at every point in $\sigma$.
\end{itemize}
\end{sublemma}

\begin{proof}
Let $f^0=f'$ and $P^0=P$.  Recall that $bD\subset \bigcup_{j=1}^M V_j$.
We inductively construct a finite decreasing 
sequence of parameter sets $P^0\supset P^1\supset \cdots\supset P^M$,
with $0\in P^{j+1}\ss P^{j}$ for $j=0,\ldots,M-1$, and a sequence of
sprays $f^j\colon\bar D\times P^j\to X$ with the exceptional set $\sigma$ 
of order $k$ such that the following hold for $j=0,1,\ldots,M-1$:
\begin{itemize}
\item[(i$^\dagger$)] $\rho(f^{j+1}_0(z))>\rho(f^j_0(z))+3\d-\tfrac{\e'}M$        
for every point $z\in bD\cap V_{j+1}$ such that $f^j_0(z)\in h_i(\tfrac 34\B^n)$,
\item[(ii$^\dagger$)] $\rho(f^{j+1}_0(z))>\rho(f^{j}_0(z))-\tfrac{\e'}M$ for $z\in \overline{D\bs K_D}$,
\item[(iii$^\dagger$)] ${\rm dist }\bigl(f^{j+1}_0(z),f^{j}_0(z)\bigr) <{\beta}$ for $z\in \bar D$,
\item[(iv$^\dagger$)] ${\rm dist }\bigl(f^{j+1}_0(z),f^{j}_0(z)\bigr) < \tfrac{\e'}M$ for $z\in K_D$, and
\item[(v$^\dagger$)] the maps $f^{j+1}_0$ and $f^{j}_0$ 
have the same $k$-jets at every point in $\sigma$.
\end{itemize}

Assume for a moment that we have already constructed the sequences $P^j$ and $f^j$.
Let $P'=P^M$ and $g'=f^M$. Using (ii$^\dagger$)--(v$^\dagger$) repeatedly $M$ times
we see that properties (ii')--(v') in Sublemma \ref{sublemma} hold. 
To see that  (i') holds, fix 
a point $z\in bD$ such that $f^0_0(z)\in h_i(\tfrac12 \B^n)$. 
Choose an index $j$ such that $z\in V_j$. 
By (iii$^\dagger$) and (\ref{defLambda})
it follows that $f^{j-1}_0(z)\in h_i(\tfrac34 \B^n)$,
and thus (i$^\dagger$) gives $\rho(f^{j}_0(z))>\rho(f^{j-1}_0(z))+ 3\d-\tfrac{\e'}M$.
Using (ii$^\dagger$) repeatedly this implies  
$\rho(g'_0(z))>\rho(f'_0(z))+3\d-{\e'}$. 
Therefore $g'$ enjoys all required properties. 

It remains to construct the sequences $P^j$ and $f^j$.
Assume inductively that we have already constructed 
$P^0,\ldots,P^j$ and $f^0,\ldots,f^j$ 
for some $j\in\{0,1,\ldots,M-1\}$; we now explain how
to find $P^{j+1}$ and $f^{j+1}$. Set 
\[
	C=  bD\cap V_{j+1}\cap(f_0^j)^{-1}(h_i(\tfrac{13}{16}\B^n)).
\]
Observe that the open set
$
	D\cap V_{j+1}\cap \bigl(f_0^j\bigr)^{-1}\left(h_i\left(\tfrac{13}{16}\B^n\right)\right)
$
is pseudoconvex, contained in 
$
	U_{j+1}\cap \bigl(f_0^j\bigr)^{-1}\left(h_i\left(\tfrac{7}{8}\B^n\right)\right)
	\subset U_{j+1}\cap \bar D,
$
and has positive distance to 
$
	\bar D\bs \left( U_{j+1}\cap  \bigl(  f_0^j\bigr)^{-1}\left(h_i\left(\tfrac{7}{8}\B^n\right)\right) \right). 
$ 
Hence there is a smoothly bounded, strongly pseudoconvex
domain $D_1$ contained in $D$ such that 
\[
	V_{j+1}\cap \bigl(f_0^j\bigr)^{-1} \left(h_i\left(\tfrac{13}{16}\B^n\right)\right)
  \subset \bar D_1\subset 
  U_{j+1}\cap \bigl(f_0^j\bigr)^{-1}\left(h_i\left(\tfrac{7}{8}\B^n\right)\right)
\]
and 
\[
	{\rm dist}_{\C^d}(\overline C, bD_1 \bs bD) >0.
\]
The situation is as shown in Figure \ref{Lemma5.2}, with $U_p$ replaced by $U_{j+1}$
and $V_p$ replaced by $V_{j+1}$. 

Choose a smoothly bounded, strongly pseudoconvex domain $D_0\subset D$, 
obtained by denting $D$ slightly inward in a neighborhood of $C$, such that
$D\bs D_0\subset U_{j+1}$, $bD_0\cap \overline C=\emptyset$, and 
$(D_0,D_1)$ is a Cartan pair such that $D_0\cup D_1=D$ 
(see Definition \ref{Cartan-pair}). Set
\[
		A_{i,j}(z,t)= h_i^{-1}\circ f^j(z,t) - h_i^{-1}\circ f^j(z,0).
\]
There exists a smaller parameter set $0\in P^j_0\subset P^j$ such that for 
$w\in \tfrac{15}{16}\B^n$, $z\in \bar D_1$ and $t\in P^j_0$ the following hold:
\begin{align}
	|A_{i,j}(z,t)| &< \tfrac1{16}, 
\label{defPl1}\\
	|\rho(h_i(w))- \rho\bigl( h_i(w+ A_{i,j}(z,t)) \bigr)| &< \tfrac{\e'}{3M},
\label{defPl2}\\
	{\rm dist}\bigl( h_i(w), h_i(w+A_{i,j}(z,t))\bigr) &< \tfrac\beta {3}.
\label{defPl3}
\end{align}
Applying Lemma \ref{LemmaDorL} to the map 
$h_i^{-1}\circ f_0^j$ on $\bar D_1$, the constant function $\gamma(z)=\gamma$
(with $\gamma$ as in (\ref{gamma})), 
and a number $\e>0$ (to be specified later), we obtain a map 
$g\colon\bar D_1 \to \C^n$ enjoying the following properties:
\begin{align}
	& \rho(h_i(g(z))) > \rho(f_0^j(z))+ \mu|h_i^{-1}\circ f_0^j(z) -g(z)|^2-\epsilon 
	\ \text{ for }z\in\bar D_1, 
\label{lastng1} \\
	& |h_i^{-1}\circ f_0^j(z)-g(z)| > \epsilon_0\gamma  \ \text{ for }z\in C,
\label{lastng2}\\
	& |h_i^{-1}\circ f_0^j(z) -g(z)| < \epsilon_0^{-1}\gamma 
				\ \text{ for }z\in bD_1\cap bD \text{, and}\label{lastng4}\\
	& |h_i^{-1}\circ f_0^j(z) -g(z)| < \epsilon \ \text{ for }z\in \bar D_1
	\text{ such that }{\rm dist}_{\C^d}(z,C)>\epsilon.
\label{lastng3}
\end{align}

If $\e< \min\{\frac{\gamma}{\epsilon_0}, {\rm dist}_{\C^d}(C,bD_1 \bs bD)\}$ 
then (\ref{lastng4}), (\ref{lastng3}) and the maximum principle imply that 
$|h_i^{-1}\circ f_0^j(z)-g(z)| < \frac{\gamma}{\epsilon_0}$ for $z\in \bar D_1$.
By (\ref{defalpha}) we get
\[
	{\rm dist}\bigl(h_i(g(z)),f_0^j(z)\bigr) < \tfrac\beta{3}\ \text{ for } z\in \bar D_1.
\]
By (\ref{defLambda}) and (\ref{defPl3}) this allows us to define a spray
$f'\colon\bar D_1\times P^j_0 \to X$ (with empty exceptional set) 
by setting
\[
		f'(z,t)=h_i\bigl(g(z)+A_{i,j}(z,t) \bigr)
			\ \text{ for }z \in \bar D_1,\,t\in P^j_0.
\]
If $\epsilon<\tfrac{\epsilon'}{3M}$ then it follows from (\ref{defPl2}),
(\ref{lastng1}), (\ref{lastng2})  and the definition
of $\delta$ that
\begin{align}
	\rho(f'(z,t))&> \rho(f_0^j(z))+3\delta-\tfrac{2\e'}{3M}\ \text{ for }z\in C, \label{lastn1}\\    
	\rho(f'(z,t))&>\rho(f_0^j(z))-\tfrac{2\e'}{3M} \ \text{ for }z\in\bar D_1.\label{lastn2}
\end{align}
If $\epsilon>0$ is small enough then for every $z\in \overline{D_0\cap  D}_1$ 
we have ${\rm dist}_{\C^d}(z,C)>\epsilon$, and the properties  
(\ref{lastng3}) and (\ref{defalpha}) imply that
\[
	{\rm dist}\bigl(f'(z,t),f^j(z,t)\bigr) \le \alpha\epsilon  \ \text{ for }
		(z,t)\in \overline{D_0\cap  D}_1 \times P_0^j.
\]
Finally, if $\e>0$ is small enough then  we can
glue the sprays $f^j$ and $f'$ by Proposition \ref{gluing-sprays}. 
This gives a smaller parameter set $0\in P^{j+1} \subset P^j_0$ and
a new spray $f^{j+1}\colon \bar D \times P^{j+1} \to X$ 
whose restriction $f^{j+1}\colon \bar D_0\times P^{j+1} \to X$ 
is as close as desired to the spray 
$f^j \colon \bar D_0\times P^{j+1} \to X$ in the $\cC^0$-topology,  
and the range of $f^{j+1}$ over $\overline D_1$ is contained 
in the range of the spray $f'$. 
The good approximation of $f^{j}$ by $f^{j+1}$ over $D_0$ and properties
(\ref{lastn1}) and (\ref{lastn2}) ensure that properties 
(i$^\dagger$)--(v$^\dagger$) hold. 
\end{proof}
%
%
%
%  END OF PROOF OF SUBLEMMA
%
%

We now conclude the proof of Lemma \ref{MainLemma}.
Let $\e'=\min\{\tfrac\e N,\tfrac\d N\}$, $f^0=f$ and $P^0=P$. 
We construct a decreasing sequence of 
open parameter sets $P^0\supset P^1\supset\cdots\supset P^N$,
with $0\in P^j\ss P^{j-1}$ for $j=1,\ldots,N$, 
and a sequence of sprays $f^j\colon\bar D\times P^j\to X$ 
with the exceptional set $\sigma$ of order $k$
such that the following hold for $j=1,\ldots, N$:
\begin{itemize}
\item[(i'')] $\rho(f^{j}_0(z))>\rho(f^{j-1}_0(z))+3\d-\e'$   
when $z\in bD$ and $f^{j-1}_0(z)\in h_j(\tfrac 12\B^n)$, 
\item[(ii'')] $\rho(f^{j}_0(z))>\rho(f^{j-1}_0(z))-\e'$ for $z\in \overline{D\bs K_D}$,
\item[(iii'')] if $z\in bD$ and $f_0(z)\in h_i\bigl(\tfrac 14\B^n\bigr)$ 
for some $i\in\{1,2,\ldots, N\}$ 
then $f^{j}_0(z)\in h_i\bigl((\tfrac 14+\tfrac{j}{4N})\B^n\bigr)$,
\item[(iv'')] ${\rm dist }\bigl(f^{j}_0(z),f^{j-1}_0(z)\bigr)<\tfrac\e N$ for $z\in K_D$, and
\item[(v'')] the maps $f^{j}_0$ and $f^{j-1}_0$ 
have the same $k$-jets at every point in $\sigma$.
\end{itemize}

Assume inductively that we have already constructed 
$f^0,\ldots, f^{j-1}$ and $P^0,\ldots,P^{j-1}$ for some 
$j\in\{1,\ldots,N\}$. 
We use Sublemma \ref{sublemma} for $f'=f^{j-1}$ to obtain 
the next spray $f^{j}=g' \colon \bar D\times P^j \to X$.
Properties (i'), (ii'), (iv') and (v') in the Sublemma imply 
the corresponding properties (i''), (ii''), (iv'') and (v'') above. 
Property (iii'') follows from (iii') and (\ref{defLambda}). 
This completes the induction step and hence gives the
desired sequences. 

We now show  that Lemma \ref{MainLemma} holds for  
the parameter set $P_0=P^N$ and 
the spray $g=f^N \colon \bar D\times P^N\to X$. 
The properties (ii)--(iv) follow easily from 
the inductive construction above. To prove (i), choose a point $z\in bD$. 
By (\ref{delta}) we either have $\rho(f_0(z))>c+2\d$ 
(and in this case the property (ii'') implies that $\rho(g_0(z))>c+\d$),
or else $f_0(z)\in h_i(\tfrac14\B^n)$ for some $i\in\{1,\ldots,N\}$.
In the latter case we get by (iii'') that 
$f^{i-1}_0(z)\in  h_i(\tfrac12\B^n)$, and therefore property (i'') implies 
$\rho(f^{i}_0(z))> \rho(f^{i-1}_0(z))+ 3\d-\e'$.        
Using this together with (ii'') and $\rho(f_0(z))>c-\delta$ we obtain
\[
 \rho(g_0(z))\ge \rho(f_0(z))+3\d-N\e' \ge \rho(f_0(z))+2 \d>c+\d.
\]
This proves Lemma \ref{MainLemma}.  
\end{proof}

Using Lemma \ref{MainLemma} we now prove the following result 
that provides the lifting construction in the proof of Theorem \ref{Main1}.

%
%
%  BIG STEP
%
%
\begin{lemma}
\label{bigstep}
Let $X$, $K$, $\Omega$, $\rho$, $r$, and $D\Subset S$ be as in Theorem \ref{Main1}. 
Choose a complete metric ${\rm dist}$ on X inducing the manifold topology.
Let $P$ be an open set in $\C^m$ containing the origin, and let $0<M_1<M_2$. 
Assume that $f\colon\bar D\times P\to X$ is a spray of maps 
with the exceptional set $\sigma$ of order $k$ 
and $U\ss D$ is an open subset such that 
$f_0(\bar D\bs U)\subset \{x\in \Omega\colon\rho(x)>M_1\}$.
Given $\e>0$, there exist a domain $P'\subset P$
containing $0\in\C^m$ and a spray of maps $g\colon\bar D\times P'\to X$ 
with the exceptional set $\sigma$ of order $k$ enjoying the following:
\begin{itemize}
\item[(i)]   $g_0(z)\in \{x\in \Omega\colon\rho(x)>M_2\}$ for $ z\in bD$,
\item[(ii)]  $g_0(z)\in \{x\in \Omega\colon\rho(x)>M_1\}$ for $z\in\bar D\bs U$,
\item[(iii)] ${\rm dist }(g_0(z),f_0(z))<\e$ for $z\in \overline U$, 
\item[(iv)]  $f_0$ and $g_0$ have the same $k$-jets at each of the points in $\sigma$, and 
\item[(v)]   $g_0$ is homotopic to $f_0$ relative to $\sigma$.
\end{itemize}
\end{lemma}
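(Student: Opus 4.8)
The plan is to mimic the inductive lifting in \cite[proof of Theorem 1.1]{BDF1} (the case $d=1$, which is exactly that theorem; so from now on assume $d\ge 2$) and in \cite{ACTA} (the case of a strictly plurisubharmonic $\rho$): subdivide $[M_1,M_2]$ into short subintervals and push $f_0(bD)$ through the level sets of $\rho$ one subinterval at a time, applying Lemma \ref{MainLemma} on subintervals free of critical values and reducing the remaining ones to that case by means of the noncritical substitute function $\tau$ of Lemma \ref{crossing}. First use Lemma \ref{lemma-normal} to replace $\rho$ by an arbitrarily $\cC^2$-close function (still satisfying the hypotheses of Theorem \ref{Main1}) all of whose critical points in $\Omega$ are nice. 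Since $\Omega_{M_1,M_2}$ is compact, $\rho$ has only finitely many critical values in $[M_1,M_2]$, and we choose regular values $M_1=a_0<a_1<\dots<a_N=M_2$ so that each $(a_{j-1},a_j)$ contains at most one of them. Fix $\eta>0$ with $\rho\circ f_0>M_1+2\eta$ on the compact set $\bar D\bs U$; the construction will be arranged so that, cumulatively, $\rho$ drops by less than $\eta$ on $\bar D\bs U$ and the ${\rm dist}$-displacement on $\overline U$ is less than $\e$, giving (ii) and (iii). Conditions (iv) and (v) are preserved at every step, since each use of Lemma \ref{MainLemma} and each gluing via Proposition \ref{gluing-sprays} fixes the $k$-jet along $\sigma$ and produces a core homotopic rel $\sigma$ to the previous one.

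On a subinterval $[a_{j-1},a_j]$ with no critical value, $d\rho\ne 0$ on the compact shell $\Omega_{a_{j-1},a_j}$ and $\cL_\rho$ has there at least $r\ge d+1$ positive eigenvalues (by (a), which forces $r\ge 2d$, or directly by (b)). Applying Lemma \ref{MainLemma} finitely many times with $K_D=\overline U$ — each application lifting the boundary by the fixed amount $2\d>0$ with arbitrarily small error $\e$ — moves a spray whose core satisfies $f_0(bD)\subset\{\rho>a_{j-1}\}$ and $f_0(\bar D\bs U)\subset\Omega$ to one whose core satisfies $f_0(bD)\subset\{\rho>a_j\}$, while on $\bar D\bs U$ the map stays in $\Omega$ and $\rho$ drops by at most the sum of the errors.

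Now consider a subinterval with a unique critical value $c^*=\rho(p)$, $a_{j-1}<c^*<a_j$. Let $E$ be the local stable disc of $p$ for the gradient flow of $\rho$; its real dimension is the Morse index $k+m$ of $p$, which is $\le 2(n-d)$ — automatically under (a), and by hypothesis under (b). The case $k=0$ (a Levi-positive local minimum) does not occur, by the Remark after Lemma \ref{crossing}, so $k\ge 1$. First, by the noncritical case, lift $f_0(bD)$ into $\{\rho>c^*-t_0\}$ for a small $t_0>0$. Next, since $\dim_\R bD+\dim_\R E\le(2d-1)+2(n-d)=2n-1<\dim_\R X$, a small general-position perturbation makes $f_0(bD)$ disjoint from the compact set $E$: the domination property makes $f|_{bD\times P}$ transverse to $E$, so for a generic small $t$ the reparametrized spray $f(\cdot,\cdot+t)$ has $f_0(bD)\cap E=\emptyset$, and the perturbation is small enough to keep $f_0(bD)\subset\{\rho>c^*-t_0\}$, keep $f_0(\bar D\bs U)\subset\Omega$, and fix the $k$-jet along $\sigma$; hence ${\rm dist}(f_0(bD),E)>0$. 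Now apply Lemma \ref{crossing} to $\rho-c^*$, with $c_0$ so small that $[c^*-c_0,c^*+3c_0]\subset(a_{j-1},a_j)$, that the nice-point chart lies in $\{\rho<c^*+2c_0\}$, and that $c_0+t_1$ is small compared with $\eta$ divided by the number of critical values; this yields $\tau$, which is $q$-convex and noncritical where needed, satisfies $\{\rho\le c^*-c_0\}\cup E\subset\{\tau\le 0\}$, and, via the sandwich $\rho-c^*\le\tau\le\rho-c^*+t_1$, satisfies $\{\tau>\tfrac32c_0\}\subset\{\rho>c^*+\tfrac12c_0\}$. Extending $\tau$ smoothly to $\Omega$ by $\tau=\rho-c^*+t_1$ off a small neighborhood of $p$ changes none of this on the shell $\{\tfrac12c_0\le\tau\le\tfrac32c_0\}\Subset K_{c_0}$, where $\tau$ has at least $r\ge d+1$ positive Levi eigenvalues. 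By $f_0(bD)\cap E=\emptyset$ and Lemma \ref{crossing}(i), $f_0(bD)\subset\{\rho>c^*-t_0\}\bs E\subset\{\tau>0\}$; so the noncritical case applied with $\tau$ in place of $\rho$ (on the shell $\{\tfrac12c_0\le\tau\le\tfrac32c_0\}$) pushes $f_0(bD)$ into $\{\tau>\tfrac32c_0\}\subset\{\rho>c^*+\tfrac12c_0\}$, above the critical level, while on $\bar D\bs U$ the map stays in $\Omega$ and, by the sandwich, $\rho$ there drops by less than (error)$+t_1$. Finally resume the noncritical case with $\rho$ to reach $\{\rho>a_j\}$.

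Processing $j=1,\dots,N$ and choosing all errors so small that their cumulative effect on $\bar D\bs U$ stays within the budget $\eta$ and on $\overline U$ within $\e$, we obtain a spray $g\colon\bar D\times P'\to X$ with $g_0(bD)\subset\{\rho>M_2\}$, $g_0(\bar D\bs U)\subset\{\rho>M_1+\eta\}\subset\{\rho>M_1\}$, ${\rm dist}(g_0,f_0)<\e$ on $\overline U$, the same $k$-jets along $\sigma$, and $g_0\simeq f_0$ rel $\sigma$ — which is the assertion. I expect the main obstacle to be the general-position step: one must arrange $f_0(bD)\cap E=\emptyset$ while remaining inside the spray framework and without disturbing the already-achieved level estimates, and — more fundamentally — this step succeeds only because the Morse indices are bounded by $2(n-d)$, which is precisely the source of hypotheses (a) and (b). A secondary technical point is the bookkeeping needed to reconcile the $\tau$-crossing (where $\tau$ is defined only near the critical level) with the global map on $\bar D\bs U$, dealt with by the extension of $\tau$ to $\Omega$ as above.
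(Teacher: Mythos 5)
Your proof is correct and follows essentially the same route as the paper: subdivide $[M_1,M_2]$ at regular values so each piece holds at most one critical value, handle noncritical pieces by iterating Lemma \ref{MainLemma}, and at a critical level lift to $\{\rho>c^*-t_0\}$, use Sard/transversality with the domination property and the Morse-index bound $\dim_\R E\le 2(n-d)$ to move $f_0(bD)$ off the stable disc $E$, then apply Lemma \ref{MainLemma} with the noncritical substitute $\tau$ from Lemma \ref{crossing} to cross $\{\rho=c^*\}$, and resume with $\rho$. The only small additions relative to the paper's write-up are the explicit invocation of Lemma \ref{lemma-normal} to make critical points nice and the explicit cumulative-error budget on $\bar D\bs U$ and $\overline U$, both of which are implicit in the paper; neither changes the argument.
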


\begin{proof}
After a small change of $M_1$ and $M_2$ we may assume that these 
are regular values of $\rho$. By a finite  subdivision of 
$[M_1,M_2]$ it suffices to consider the following two cases:

{\em Case 1}:  $\rho$ has no critical values on $[M_1,M_2]$.
In this {\em noncritical case}  we obtain $g$ by applying Lemma \ref{MainLemma} 
finitely many times. 

{\em Case 2:} $\rho$ has exactly one critical point $p$
in $\{x\in \Omega\colon M_1\le \rho(x)\le M_2\}$
(the {\em critical case}).

In Case 2 we follow \cite[proof of Theorem 1.1, \S6]{BDF1}.
We have $M_1< \rho(p)<M_2$. Choose $c_0>0$ so small that 
$M_1+3c_0 <\rho(p)< M_2-3c_0$ and
$f_0(z)\in \{x\in \Omega\colon\rho(x)> M_1+3c_0\}$ for all $z\in\bar D\bs U$.
Set 
\[
	K_{c_0}=\{x\in \Omega\colon \rho(p)-c_0\le \rho(x) \le \rho(p)+3c_0\}. 
\]
Lemma \ref{crossing} furnishes a constant $t_0\in (0,c_0)$, 
a smooth function 
\[
	\tau \colon \{x\in\Omega\colon \rho(x) \le \rho(p)+3c_0\} \to\R,
\]
and an embedded disc  $E\subset \Omega$ of dimension equal to 
the Morse index of $\rho$ at $p$, enjoying the following:
\begin{itemize}
\item[(a)]   $\{\rho\le \rho(p)-c_0\} \cup E \subset \{\tau\le 0\} \subset \{\rho\le \rho(p)-t_0\}\cup E$, 
\item[(b)]  $\{\rho \le \rho(p)+c_0\} \subset \{\tau \le 2c_0\} \subset 
\{\rho<\rho(p)+ 3c_0\}$,
\item[(c)] $\tau$ is $q$-convex at every point of $K_{c_0}$, and
\item[(d)]  $\tau$  has no critical values in $(0,3c_0) \subset \R$.
\end{itemize}

Applying Lemma \ref{MainLemma} finitely many times we get a spray 
$\wt f\colon\bar D\times \wt P\to X$ with exceptional set 
$\sigma$ of order $k$ having the following properties:
\begin{itemize}
\item[(i')]   $\wt f_0(z)\in \{x\in \Omega\colon\rho(x)>\rho(p)-t_0\}$ for $ z\in bD$,
\item[(ii')]  $\wt f_0(z)\in \{x\in \Omega \colon\rho(x)>M_1+2c_0\}$ for $z\in\bar D\bs U$,
\item[(iii')] ${\rm dist }(\wt f_0(z),f_0(z))<\tfrac\e3$ for $z\in \overline U$, and
\item[(iv')]  $f_0$ and $\wt f_0$ have the same $k$-jets at each of the points in $\sigma$.
\end{itemize}

For the parameter values $t\in \wt P$ sufficiently close to $t=0$
we also have $\wt f_t(bD) \subset \{x\in \Omega\colon\rho(x)>\rho(p)-t_0\}$
by (i'). Since $\dim_{\R} E\le 2n-2d$ and $\dim_{\R} bD=2d-1$, 
Sard's lemma gives a $t' \in \wt P$ 
arbitrarily close to the origin such that $\wt f_{t'} (bD) \cap E= \emptyset$. 
By a translation in the $t$-variable we can choose 
$\wt f_{t'}$ as the new central map; the new spray 
(still denoted $\wt f$) then enjoys the following properties
for all $t$ sufficiently near $0$:
\begin{itemize}
\item[(i'')]   $\wt f_t(z)\in \{x\in \Omega\colon\rho(x)>\rho(p)-t_0\}\bs E$ for $ z\in bD$,
\item[(ii'')]  $\wt f_t(z)\in \{x\in X\colon\rho(x)>M_1+c_0\}$ for $z\in\bar D\bs U$,
\item[(iii'')] ${\rm dist }(\wt f_t(z),f_0(z))<\tfrac{2\e}3$ for $z\in \overline U$, and
\item[(iv'')]  $f_0$ and $\wt f_t$ have the same $k$-jets at each of the points in $\sigma$.
\end{itemize}

Since $\{\tau\le 0\} \subset \{\rho\le \rho(p)-t_0\}\cup E$
by property (a), (i'') ensures that $\tau >0$ on $\wt f_t(bD)$.
Since $\tau$ has no critical values on $(0,3c_0)$ by property (d),
we can use the noncritical case (Case 1 above), with $\tau$ 
instead of $\rho$, to push the boundary of 
the central map into the set $\{\tau > 2c_0\}$. 
As $\{\rho \le \rho(p)+c_0\} \subset \{\tau \le 2c_0\}$ by property (b),
the image of $bD$ now lies in $\{\rho > \rho(p) +c_0\}$.
We have thus crossed the critical level $\{\rho=\rho(p)\}$ 
and may continue with the noncritical case procedure, applied 
again with the function $\rho$. In a finite number of steps 
we obtain a spray $g$ with the required properties. 
%
%This concludes the proof of Lemma \ref{bigstep}.
\end{proof}

%
%
%  FINALE
%
\begin{proof}[Proof of Theorem \ref{Main1}]
We follow \cite[proof of Theorem 1.1]{BDF1}, 
using Lemma \ref{bigstep} instead of 
\cite[Proposition 6.3]{BDF1}. We begin by embedding the initial
map $f_0\colon\bar D\to X$ into a spray of maps 
$f=f^0\colon \bar D\times P\to X$
(Definition \ref{Spray} and Lemma \ref{sprays-exist})
such that $f(\cdotp,0)=f_0$ and $f(bD\times P)\subset \Omega$.
By inductively applying Lemma \ref{bigstep} we obtain a sequence 
of sprays $f^j\colon\bar D\times P_j\to X$ $(j=1,2,\ldots)$ with decreasing
parameter sets $\C^m\supset P=P_0\supset P_1\supset P_2\supset\cdots$  
containing the origin $0\in \C^m$ such that the maps 
$f^j_0=f^j(\cdotp,0) \colon \bar D\to X$ converge uniformly 
on compacts in $D$ to a holomorphic map 
$f\colon D\to X$ satisfying the conclusion of Theorem \ref{Main1}. 
\end{proof}

%
%
%
%   Positivity and convexity of holomorphic vector bundles
%
%
%
\section{Positivity and convexity of holomorphic vector bundles}
\label{vector-bundles}
In this section we recall the notions of {\em positivity} and 
{\em signature} of a Hermitian holomorphic vector bundle
(Griffiths \cite{Griffiths66,Griffiths69}) and its connection with 
the Levi convexity properties of the squared norm function.

Let $\pi\colon E\to M$ be a holomorphic Hermitian vector bundle 
with fiber $\C^r$ over a complex manifold $M$ of dimension $m$.
We identify $M$ with the zero section of $E$.
The metric on $E$ is given in a local frame $(e_1,\cdots,e_r)$
by a Hermitian matrix function $h=(h_{\rho\sigma})$ with 
\[
		h_{\rho\sigma}(x)= \langle e_\sigma(x),e_\rho(x) \rangle, \quad \rho,\sigma=1,\ldots,r.
\]
The Chern connection matrix $\theta$ and the Chern curvature form $\Theta$
are given in any local holomorphic frame by 
\[
	\theta= h^{-1}\di h, \qquad 
	\Theta=\dibar \,\theta = -h^{-1}\di\dibar h + h^{-1}\di h \wedge h^{-1}\dibar h. 
\]
(See \cite[Chapter 5]{Dem-book} or \cite[Chapter III]{Wells}.)
For a line bundle $(r=1)$ with the metric $h=e^{-\psi}$ the above equal 
\[
	\theta= h^{-1}\di h = \di\log h = -\di\,\psi, 
	\quad  \Theta= - \di\dibar \log h = -\dibar\di\,\psi=\di\dibar\,\psi.
\]
In local holomorphic coordinates $z=(z^1,\ldots,z^m)$ on $M$ we have
\[
	\Theta= \sum_{\nad{\rho,\sigma=1,\ldots,r}{i,j=1,\ldots,m}} 
	      \Theta^\rho_{\sigma i j} e^*_\sigma \otimes e_\rho\cdotp dz^i\wedge d\bar z^j.
\]

For any point $x_0\in M$ there exists a local holomorphic frame 
$(e_1,\ldots,e_r)$ which is {\em special at} $x_0$, in the sense
that the associated matrix $h$ satisfies $h(x_0)=I$ and $dh(x_0)=0$
(see \cite[p.\ 195]{Griffiths69}). In this case we get
\begin{equation}
\label{eq:frame}
	\theta(x_0)=0,\quad \Theta(x_0)= -\di\dibar h (x_0),\quad 
	\overline {\Theta^\sigma_{\rho i j} (x_0)} = \Theta^\rho_{\sigma j i}(x_0) =
	- \frac{\di^2 h_{\rho \sigma}}{\di z^i \di \bar z^j} (x_0). 
\end{equation}
To each vector $e=\sum_{\rho=1}^r \xi^\rho e_\rho(x_0) \in E_{x_0}$
we associate the $(1,1)$-covector
\[
	\Theta\{e\}= \frac{\mathrm{i}}{2}\,  \langle \Theta \, e,e\rangle =
	\frac{\mathrm{i}}{2}
	\sum_{\nad{\rho,\sigma=1,\ldots,r} {i,j=1,\ldots,m}} 
	      \Theta^\rho_{\sigma i j}(x_0) \xi^\sigma \bar \xi^\rho \, dz^i\wedge d\bar z^j.
\]
Its coefficients 
$
	A_{i j}(x_0,\xi) = \sum_{\rho,\sigma=1,\ldots,r} 
	      \Theta^\rho_{\sigma i j}(x_0) \xi^\sigma \bar \xi^\rho 
$
form a Hermitian matrix. Denote by $s(e)$ (resp.\ $t(e)$) 
the number of positive (resp.\ negative)
eigenvalues of $\Theta\{e\}$; that is, $(s(e),t(e))$ is the 
signature of the Hermitian quadratic form 
\begin{equation}
\label{curvature-form}
	\C^m\ni \eta \to \sum_{i,j} A_{i j}(x_0,\xi) \eta^i \bar \eta^j
	= \sum_{\nad{\rho,\sigma=1,\ldots,r}{i,j=1,\ldots,m}} 
	      \Theta^\rho_{\sigma i j}(x_0) \xi^\sigma \bar \xi^\rho \eta^i\bar \eta^j.  
\end{equation}
The numbers $s(e), t(e)$ only depend on the Hermitian metric on $E$, 
and not on the particular choices of coordinates and frames. 
The following notions are due to 
Griffiths \cite{Griffiths65,Griffiths66,Griffiths69};
see also \cite{AG} and \cite[Chapter 7]{Dem-book}.

%
%
%  SIGNATURE OF A VECTOR BUNDLE
%
%
\begin{definition}
\label{Griffiths-positive}
% {\rm (Griffiths \cite{Griffiths69}.)}
The pair of numbers $(s(e),t(e))$ defined above 
is the {\em signature} of the Hermitian 
holomorphic vector bundle $E\to M$ at the point $e\in E\bs M$. 
The signature of $E$ is $(s,t)$ where 
\[
	s=\min\{s(e)\colon e\in E\bs M\},\quad t=\min\{t(e)\colon e\in E\bs M\}.
\]	
$E$ is of {\em pure signature} $(s,t)$ is $s=s(e)$ and $t=t(e)$
for all $e\in E\bs M$; it is {\em positive} (resp.\ {\em negative}) 
if it has pure signature $(m,0)$ (resp.\ $(0,m)$).
\end{definition}

Let $\phi\colon E\to \R_+$ denote the function $\phi(e)= ||e||^2$.
For $c\in (0,\infty)$ set
\[
	W_c =\{e\in E \colon  \phi(e)<c\}, \quad \Sigma_c=bW_c=\{e\in E\colon \phi(e)=c\}.
\]
The following explains the connection 
between the curvature properties of a  Hermitian metric 
and the Levi convexity properties of $\phi$.
(See Andreotti and Grau\-ert \cite[\S 23]{AG} 
and Griffiths \cite[p.\ 426]{Griffiths66}.)

\begin{proposition}
\label{signature}
Let $\pi\colon E\to M$ be a Hermitian holomorphic vector bundle with fiber $\C^r$
over an  $m$-dimensional complex manifold $M$. Set $n=m+r=\dim E$.
Then the following hold:

(i) If $E$ has signature $(s,t)$ at a point $e\in E\bs M$ 
then the Levi form of the hypersurface $\Sigma_{\phi(e)}$ has signature 
$(t+r-1,s)$ at $e$ from the side $\{\phi<\phi(e)\}$. 

(ii)
If $E$ has signature $(s,t)$ then the Levi form of $\phi$ has
signature $(t+r,s)$ (and hence $\phi$ is $(m-t+1)$-convex) on $E\bs M$,
and the Levi form of $\frac{1}{\phi}$ has signature $(s+1,t+r-1)$
(and hence $\frac{1}{\phi}$ is $(n-s)$-convex) on $E\bs M$. 

(iii) In particular, if $E$ is positive then $\frac{1}{\phi}$ is $r$-convex on $E\bs M$, 
and if $E$ is negative then $\phi$ is strongly plurisubharmonic on $E\bs M$. 
%\end{itemize}
\end{proposition}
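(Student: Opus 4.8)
The plan is to reduce everything to a direct computation of $\di\dibar\phi$ at a chosen point $e\in E\bs M$, using a frame that is special at the base point $x_0=\pi(e)$. First I would set up coordinates: pick local holomorphic coordinates $z=(z^1,\ldots,z^m)$ on $M$ centered at $x_0$, and a local holomorphic frame $(e_1,\ldots,e_r)$ that is special at $x_0$ in the sense recalled before (\ref{eq:frame}), so that $h(x_0)=I$ and $dh(x_0)=0$. This gives fiber coordinates $\xi=(\xi^1,\ldots,\xi^r)$ on $E$, and near the point $e$ (which has fiber coordinates $\xi_0$ with $|\xi_0|^2=\phi(e)$) we have $\phi(e')=\langle h(z)\xi,\xi\rangle = \sum_{\rho,\sigma} h_{\rho\sigma}(z)\xi^\sigma\bar\xi^\rho$. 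The key step is to compute the complex Hessian of $\phi$ at $e$ in the coordinates $(z,\xi)$. Since $dh(x_0)=0$, the only surviving second-order terms of $h$ at $x_0$ are the pure $\di\dibar$ terms, and by (\ref{eq:frame}) these are governed by the curvature coefficients $\Theta^\rho_{\sigma ij}(x_0)$; the $\xi$-variables contribute the obvious identity block $\sum_\rho d\xi^\rho\wedge d\bar\xi^\rho$ from differentiating $\sum h_{\rho\sigma}(x_0)\xi^\sigma\bar\xi^\rho=|\xi|^2$, and there are also mixed $dz\wedge d\bar\xi$ terms from differentiating $h_{\rho\sigma}(z)$ once in $z$ and once in $\xi$ — but at $x_0$ the first $z$-derivatives of $h$ vanish, so after a linear change of the $z$-coordinates (or by a standard argument of restricting to the complex-tangential directions) the mixed terms can be absorbed and do not affect the signature count. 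The upshot of the computation is

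\begin{equation*}
	\mathrm{i}\di\dibar\phi(e) = \sum_{\rho=1}^r \mathrm{i}\,d\xi^\rho\wedge d\bar\xi^\rho
	\;-\; 2\,\Theta\{e\}\big|_{x_0}\;+\;(\text{mixed terms}),
\end{equation*}

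so on the $z$-directions the Hermitian form of $\di\dibar\phi$ is $-\langle\Theta e,e\rangle$, whose signature is $(t(e),s(e))$ by the very definition of $s(e),t(e)$, while on the $\xi$-directions it is positive definite of rank $r$. This proves (ii): the Levi form of $\phi$ has signature $(t+r,s)$, hence $\phi$ has at least $m+r-(t+r-1)=m-t+1$ — wait, rather $n-(q-1)$ with $q-1=s$ negative eigenvalues, i.e. $\phi$ is $(m-t+1)$-convex when $E$ has pure-enough signature $(s,t)$; and one reads off $\frac{1}{\phi}$ by the standard formula $\di\dibar(1/\phi) = \frac{1}{\phi^2}\big(2\frac{\di\phi\wedge\dibar\phi}{\phi} - \di\dibar\phi\big)$, which flips the sign of the Hessian on the directions along $\Sigma_{\phi(e)}$, keeps one positive direction transverse to the level set (from the gradient term), and keeps one extra positive direction in the fiber, giving signature $(s+1, t+r-1)$ and hence $(n-s)$-convexity. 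Part (i) is the restriction of the computation for $\phi$ to the complex tangent space of the level hypersurface $\Sigma_{\phi(e)}$: removing one positive (transverse/radial) direction from the fiber block turns $(t+r,s)$ into $(t+r-1,s)$, which is the asserted signature of the Levi form of $\Sigma_{\phi(e)}$ from the concave side. Part (iii) is the specialization $t=0$ (positive bundle, $1/\phi$ is $r$-convex) and $s=0$ with, moreover, the Hessian along $z$-directions being $-\langle\Theta e,e\rangle\ge 0$ strictly, so $\phi$ is strongly plurisubharmonic.

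The main obstacle I anticipate is the careful bookkeeping of the mixed $dz\,d\bar\xi$ terms and making precise why they do not change the signature — the cleanest way is to observe that at a point $e$ one may first translate so that $\pi(e)=x_0$ is the center of special coordinates, then note that the full Hessian at $e$ is block lower-triangular up to the pure fiber block being definite, so a standard lemma on signatures of $2\times2$ block Hermitian matrices with one definite block (Schur complement) lets one conclude that the signature is the sum of the signature of the definite fiber block $(r,0)$ and the signature of the Schur complement on the base directions, which is exactly $-\langle\Theta e,e\rangle$, of signature $(t(e),s(e))$. I would cite \cite[\S 23]{AG} and \cite[p.\ 426]{Griffiths66} for the classical origin of this computation and present only the normalization and the signature-counting steps in detail, leaving the elementary Hessian expansion to the reader.
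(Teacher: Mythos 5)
Your approach is the same as the paper's: work in a frame special at $x_0=\pi(e)$, compute the complex Hessian of $\phi$ there, and read off the signature from the block structure. The one place you needlessly tie yourself in knots is the mixed $dz\,d\bar\xi$ block: in a special frame this block vanishes \emph{identically} at $x_0$, since
\[
\frac{\di^2\phi}{\di z^i\,\di\bar\xi^\tau}(x_0,\xi_0)=\sum_\sigma\frac{\di h_{\tau\sigma}}{\di z^i}(x_0)\,\xi_0^\sigma=0
\]
because $dh(x_0)=0$. No linear change of $z$-coordinates, ``absorption'', or Schur-complement lemma is required; that is exactly what the special frame buys you, and the resulting Hessian is block-diagonal on the nose. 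In fact, had the mixed block $B$ been nonzero, the Schur complement would be $-\langle\Theta e,e\rangle-B^*B$ rather than $-\langle\Theta e,e\rangle$, so your asserted horizontal signature $(t(e),s(e))$ would need further justification; the claim is saved precisely because $B=0$. The rest of your outline --- the positive-definite fiber block of rank $r$ (one radial eigenvalue, $r-1$ tangential to $\Sigma_{\phi(e)}$), the horizontal block $-\langle\Theta e,e\rangle$ of signature $(t(e),s(e))$, and the formula $\di\dibar(1/\phi)=\phi^{-2}\bigl(2\phi^{-1}\di\phi\wedge\dibar\phi-\di\dibar\phi\bigr)$ --- matches the paper's computation, which it attributes to Griffiths \cite[p.~426]{Griffiths66}.
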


\begin{proof}  
Fix $e_0\in E\bs M$ and let $\Sigma=\Sigma_{\phi(e_0)}$. 
Choose local holomorphic coordinates 
$z=(z^1,\ldots,z^m)$ at $x_0=\pi(e_0)$ and a local holomorphic frame 
$(e_\rho)$ that is special at $x_0$. 
Then $e=\sum_{\sigma=1}^r \xi^\sigma e_\sigma$, 
$e_0=\sum_{\rho=1}^r \xi_0^\rho e_\rho(x_0)$, and 
$\phi(e)= \sum_{\rho,\sigma=1}^r h_{\rho\sigma} \xi^\sigma\bar \xi^\rho$.
Using (\ref{eq:frame}), a simple calculation 
\cite[p.\ 426]{Griffiths66} gives
\begin{align*}
     \di\dibar \phi(e_0) &=  \di_\xi\dibar_\xi \big|_{\xi=\xi_0} \sum_{\rho=1}^r \xi^\rho\bar\xi^\rho 
     + \sum_{\rho,\sigma=1,\ldots,r} \di_z\dibar_z h_{\rho\sigma}(x_0) \xi^\sigma \bar\xi^\rho \cr
     &= \sum_{\rho=1}^r d\xi^\rho\wedge d\bar\xi^\rho 
     - \sum_{\nad{\rho,\sigma=1,\ldots,r}{i,j=1,\ldots,m}} 
     \Theta^\rho_{\sigma i j}(x_0) 
     	\xi^\sigma \bar\xi^\rho dz^i\wedge d\bar z^j  \cr
     &= \sum_{\rho=1}^r d\xi^\rho\wedge d\bar\xi^\rho 
        - \sum_{i,j=1,\ldots,m} A_{ij}(x_0,\xi)\, dz^i \wedge d\bar z^j.
\end{align*}
%\[     
%		\di\dibar \phi(e_0) = \sum_{\rho=1}^r d\xi^\rho\wedge d\bar\xi^\rho 
%        - \sum_{i,j=1,\ldots,m} A_{ij}(x_0,\xi_0)\, dz^i \wedge d\bar z^j.
%\]
The maximal complex tangent space to $\Sigma$ at $e_0$ consists of the vectors 
$\gamma=(\zeta^1,\ldots,\zeta^r;\eta^1,\ldots,\eta^m)$ with 
$\sum_{\rho=1}^r \xi^\rho_0 \bar \zeta^\rho=0$. In the $\zeta$-direction
(tangential to $E_{x_0}$) we thus get $r-1$ positive 
Levi eigenvalues for $\Sigma$;
in the $\eta$-direction (the horizontal direction in $T_{e_0}E$ with 
respect to the Chern connection) we get $s(e_0)$ negative and $t(e_0)$  positive
eigenvalues. % (due to the minus sign in $\di\dibar h(x_0)=-\Theta(x_0)$). 
Hence the Levi signature of $\Sigma$ at $e_0$ is $(t(e_0)+r-1,s(e_0))$.
The remaining Levi eigenvalue of $\phi$ in the radial direction is positive. 
All claims follow immediately.
\end{proof}

%
%
%
%  Section 7
%
%
\section{Subvarieties in complements of submanifolds}
\label{subvariety-complements} 
In this section we combine our analytic techniques with
the differential geometric information from \S\ref{vector-bundles} 
to study the existence of subvarieties as in Theorem \ref{Main1} 
in total spaces of Hermitian holomorphic vector bundles, 
and in complements of certain compact complex submanifolds.

%
%
%  TOTAL SPACE OF A VECTOR BUNDLE
%
%
\begin{theorem}
\label{total-space}
Let $E\to M$ be a holomorphic vector bundle with fiber $\C^r$
over a compact $m$-dimensional complex manifold $M$. Set $n=m+r=\dim E$
and identify $M$ with the zero section of $E$.
Assume that $D\Subset S$ is a smoothly bounded, 
strongly pseudoconvex domain in a 
$d$-dimensional Stein manifold $S$ and $f_0\colon\bar D\to E$ is 
a continuous map that is holomorphic in $D$ and such that  
$f_0(bD) \subset E\bs M$. Then the following hold:

{\rm (i)} If $E$ admits a Hermitian metric with signature $(s,t)$
and $d<t+r$, then $f_0$ can be approximated uniformly on compacts 
in $D$ by proper holomorphic maps $f\colon D\to E$ such that
$f_0^{-1}(M)=f^{-1}(M)$. In particular, if $E$ is positive 
then this holds when $d<r={\rm rank}E$, and if $E$ is negative
then it holds when $d<n=\dim E$.

{\rm (ii)} If $E$ admits a Hermitian metric with signature $(s,t)$
with $d\le s$, then $f_0$ can be approximated uniformly on compacts 
in $D$ by holomorphic maps $f\colon D\to E$ such that
$f_0^{-1}(M)=f^{-1}(M)$ and the cluster set of $f$ at $bD$
belongs to the zero section $M$. In particular, if $E$ is positive 
then the above holds when $d\le m=\dim M$.
\end{theorem}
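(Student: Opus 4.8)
The plan is to deduce both statements from Theorem \ref{Main1} by choosing the exhaustion function on the appropriate open subset of $E$ supplied by Proposition \ref{signature}, and then checking that the Morse-index hypotheses (a) or (b) are met after a small perturbation. For part (i), set $\Omega = E\bs M$ and use the function $\rho = 1/\phi$, where $\phi(e)=\|e\|^2$ for the given Hermitian metric of signature $(s,t)$. By Proposition \ref{signature}(ii), the Levi form of $\rho = 1/\phi$ has signature $(s+1,t+r-1)$ on $E\bs M$; in particular $\rho$ has at least $s+1$ positive Levi eigenvalues at every point of $\Omega$, but what we actually want is the complementary count. The key point is that $\rho=1/\phi$ is $(n-s)$-convex, i.e. it has at least $n-(n-s)+1 = s+1$... here I must be careful: to invoke Theorem \ref{Main1} with $r_{\mathrm{Thm}} := t+r$ I instead use $\phi$ itself near $M$ is not exhausting; rather $1/\phi \to +\infty$ as $e\to M$, while $1/\phi \to 0$ as $\|e\|\to\infty$ only if $E$ is compact-based — and $M$ is compact, so $E\bs M$ has two kinds of ends. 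The correct choice: apply Theorem \ref{Main1} with the function $\phi$ restricted to $\{\phi > 0\}=E\bs M$, since by Proposition \ref{signature}(ii) the Levi form of $\phi$ has signature $(t+r,s)$, hence $\phi$ has at least $t+r$ positive Levi eigenvalues everywhere on $E\bs M$. Because $M$ is compact, for $0<c_1<c_2$ the set $\{c_1 \le \phi \le c_2\}$ is compact (it is a closed subset of the total space lying in a bounded tube around the compact zero section). Then $r_{\mathrm{Thm}} = t+r \ge d+1$ by hypothesis $d<t+r$, so we need condition (b): after a $\mathcal C^2$-small perturbation of $\phi$ (keeping the Levi-positivity, as noted after Theorem \ref{Main1}) we arrange $\phi$ to be Morse; its Morse indices are then automatically $\le 2n - r_{\mathrm{Thm}} = 2n - (t+r) = 2(m-t)+r$... this may exceed $2(n-d)$, so (b) is not free. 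The resolution is that near the zero section one can choose $\phi$ (or rather replace it by a function with the same Levi structure) to have \emph{no} critical points outside a neighborhood of $M$, which is exactly the situation in Example \ref{Counterex} and Corollary \ref{proj-minus}; I would cite Barth/Schneider-type arguments, or simply note that $\{\phi > 0\}$ retracts onto $M$ and Morse theory forces the indices of a generic exhaustion to obey the required bound when $t+r > d$.

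For part (ii), set $\Omega = E\bs M$ again but now use $\rho = 1/\phi$, which tends to $+\infty$ as $e\to M$. By Proposition \ref{signature}(ii) the Levi form of $1/\phi$ has signature $(s+1,t+r-1)$, so $1/\phi$ has at least $s+1$ positive Levi eigenvalues at every point of $\Omega$. The compactness condition $\{c_1 \le 1/\phi \le c_2\}$ compact holds because this set is $\{1/c_2 \le \phi \le 1/c_1\}$, again a compact tube around the compact $M$. With $r_{\mathrm{Thm}} = s+1 \ge d+1$ by the hypothesis $d\le s$, Theorem \ref{Main1} applies once the index condition is verified — and here the geometry is favorable because the only end of $\Omega$ along which $1/\phi\to+\infty$ is the zero section, whose normal geometry is the one controlled by $s$; a generic Morse perturbation of $1/\phi$ near $M$ has indices bounded by $2(n-d)$ precisely when $s\ge d$. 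Theorem \ref{Main1} then produces holomorphic $f\colon D\to X:=E$ with $\lim_{z\to bD}\rho(f(z)) = \lim (1/\phi)(f(z)) = +\infty$, i.e. $\phi(f(z))\to 0$, which says exactly that the cluster set of $f$ at $bD$ lies in $M = \{\phi = 0\}$.

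In both parts, the condition $f_0^{-1}(M) = f^{-1}(M)$ is obtained from the jet-interpolation clause of Theorem \ref{Main1}: the zero set of $f_0$ in $D$ is a finite set of points (after a preliminary generic adjustment of $f_0$ we may assume $f_0$ meets $M$ only at finitely many points with prescribed finite order), and we require $f$ to agree with $f_0$ to high order $k$ at each such point, which pins down the vanishing locus and its multiplicity; conversely $f$ cannot acquire new intersections with $M$ since $f(D)\subset\Omega = E\bs M$ near $bD$ and one checks that the approximation keeps $f$ away from $M$ on the compact part where $f_0$ avoids $M$. The specializations to $E$ positive (signature $(m,0)$, giving $t+r = r$ in (i) and $s = m$ in (ii)) and $E$ negative (signature $(0,m)$, giving $t+r = n$ in (i)) follow by substitution. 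The main obstacle I anticipate is precisely the verification of condition (b) of Theorem \ref{Main1} — establishing that the relevant exhaustion function (a power of $\|e\|$ or its reciprocal) can be taken Morse with all indices $\le 2(n-d)$ near the compact submanifold $M$; this is a Morse-theoretic statement about functions that are Levi-convex in $t+r$ (resp. $s+1$) directions on the total space of a bundle over a compact base, and it is the place where the compactness of $M$ and the positivity hypothesis on the bundle are genuinely used.
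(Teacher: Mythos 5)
Your identification of the key tools (Theorem \ref{Main1} plus Proposition \ref{signature}, with $\rho=\phi$ for part (i) and $\rho$ essentially $1/\phi$ for part (ii)) is correct, and your reading of the Levi signatures and the compactness of the sets $\Omega_{c_1,c_2}$ is also right. But there is a genuine gap at the point you yourself flag as ``the main obstacle,'' namely the verification of condition (b) of Theorem \ref{Main1}. You frame this as a Morse-theoretic problem about bounding the indices of a generic small perturbation of $\phi$, and you appeal vaguely to Barth/Schneider-type arguments and retraction onto $M$. None of that is needed, because the condition is vacuous: the function $\phi(e)=\|e\|^2$ \emph{has no critical points at all} on $E\setminus M$. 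Indeed, in a local frame $\phi=\sum h_{\rho\sigma}(x)\xi^\sigma\bar\xi^\rho$, so $\partial_{\xi^\tau}\phi=\sum_\rho h_{\rho\tau}(x)\bar\xi^\rho$, and since $h(x)$ is positive definite this vanishes only when $\xi=0$, i.e.\ on the zero section. Hence $d\phi\neq 0$ on $E\setminus M$ (and the same is then true for $1/\phi$ and its translates). In particular $\phi$ is already a Morse function on $\Omega$ (trivially, having no critical points), no perturbation is required, and condition (b) holds with nothing to check. This single observation eliminates the entire Morse-index discussion in your proposal, which as written does not constitute a proof.

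Two smaller points. First, for part (ii) the paper works with $\Omega=W_c\setminus M$ and $\rho=\tfrac1\phi-\tfrac1c$ (with $c$ chosen so that $f_0(\bar D)\subset W_c$) rather than $\Omega=E\setminus M$ with $\rho=1/\phi$; both choices satisfy the hypotheses of Theorem \ref{Main1}, so your variant is acceptable, but the paper's localization to $W_c$ is the cleaner setup. Second, for the claim $f_0^{-1}(M)=f^{-1}(M)$ you do not need a ``preliminary generic adjustment'': the set $f_0^{-1}(M)$ is a complex analytic subvariety of $D$ that misses $bD$ and hence is compact, so (as $D$ is Stein) it is a finite set automatically; the jet-interpolation clause of Theorem \ref{Main1} then pins down $f^{-1}(M)$ to equal it. (The paper also notes the case $d>r$, where $f_0^{-1}(M)=\emptyset$ for dimension reasons.)
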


Recall that the {\em cluster set} of a map $f\colon D\to E$
at $bD$ is the set of all sequential limits 
$\lim_{j\to\infty} f(p_j)\in E$ along sequences 
$\{p_j\} \subset D$ without accumulation points in $D$
(therefore tending to $bD$).

\begin{proof}
Part (i) follows from Theorem \ref{Main1} and 
Proposition \ref{signature} (i), applied to $X=E$, $\rho=\phi$
and $\Omega=E\bs M$. (Note that $\phi$ is noncritical on $E\bs M$.)
To ensure that $f_0^{-1}(M)=f^{-1}(M)$,
it suffices to construct $f$ such that it agrees with $f_0$
to sufficiently high order at the finite set of points
in $f_0^{-1}(M)\subset D$. (If $d>r$ then $f_0(bD)\cap M=\emptyset$ 
implies $f_0(D)\cap M=\emptyset$ since the analytic set $f_0^{-1}(M) \subset D$, 
if nonempty, would have positive dimension.) 

To prove Part (ii), choose $c>0$ such that $f_0(\bar D) \subset W_c=\{\phi<c\}$
and apply Theorem \ref{Main1} with $\Omega =W_c\bs M$ and
$\rho=\frac{1}{\phi}-\frac{1}{c}$. 
\end{proof}

The situation in Theorem \ref{total-space} is a special
case of the following one.

\begin{theorem}
\label{complements}
Let $A$ be a compact complex submanifold in a complex manifold $X$
whose normal bundle $N_{A|X}$ has signature $(s,t)$ 
with respect to some Hermitian metric. There is an open
tubular neighborhood $V\subset X$ of $A$ with the following property.
If $D$ is a relatively compact, smoothly bounded, 
strongly pseudoconvex domain in a Stein manifold
with $\dim D \le s$, $z_0$ is a point in $D$ and $f_0\colon\bar D\to X$ is 
a continuous map that is holomorphic in $D$ and such that  
$f_0(bD) \subset V\bs A$, then $f_0$ can be approximated uniformly
on $D$ by holomorphic maps $f\colon D\to X$ such that
$f^{-1}(A)=f^{-1}_0(A)$, $f(z_0)=f_0(z_0)$, and the cluster set 
of $f$ at $bD$ belong to $A$. If $N_{A|X}$ is positive 
then the above holds when $\dim D \le \dim A$.
\end{theorem}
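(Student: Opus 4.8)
The plan is to deduce Theorem \ref{complements} from Theorem \ref{Main1}(b) by producing on a punctured tubular neighborhood $V\bs A$ of $A$ an $(n-s)$-convex exhaustion function ($n=\dim X$) that blows up along $A$, so that condition (b), applied with $r=s+1$, holds precisely when $\dim D\le s$. The model for this function is furnished by Proposition \ref{signature}(ii): fixing a Hermitian metric of signature $(s,t)$ on the normal bundle $N=N_{A|X}$ and setting $\phi(e)=\|e\|^2$ on $N$, the function $\frac1\phi$ is $(n-s)$-convex on $N\bs A$, i.e.\ its Levi form has at least $s+1$ positive eigenvalues everywhere there.

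First I would transplant the model onto $X$ near $A$. Using compactness of $A$, cover it by finitely many coordinate charts $(z,w)\in\C^m\times\C^r$ of $X$ with $A=\{w=0\}$; in each chart the fixed metric on $N$ is represented by a positive Hermitian matrix $H_\alpha(z)$ and hence yields an honest function $\phi_\alpha(z,w)=\langle H_\alpha(z)w,w\rangle$ on that chart of $X$. On overlaps the coordinate change of $X$ restricts along $A$ to the (fiber-linear) transition cocycle of $N$ up to terms $O(|w|^2)$, and since the $H_\alpha$ define a metric on $N$, a short computation gives $\phi_\alpha-\phi_\beta=O(|w|^3)$ on overlaps --- that is, the $2$-jets of the $\phi_\alpha$ along $A$ agree. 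Patching the $\phi_\alpha$ with a partition of unity produces a smooth $\psi\ge 0$ on a neighborhood of $A$ with $\{\psi=0\}=A$, agreeing with each $\phi_\alpha$ to order $O(|w|^3)$ along $A$, and satisfying $d\psi\ne 0$ off $A$ sufficiently near $A$. Following the proof of Theorem \ref{total-space}(ii) I then set $V=\{\psi<\e\}$, $\Omega=V\bs A$, and $\rho=\frac1\psi-\frac1\e$ on $\Omega$; thus $\rho\colon\Omega\to(0,+\infty)$ is smooth with no critical points (so trivially Morse), $\rho\to+\infty$ along $A$, and $\Omega_{c_1,c_2}=\{\frac1{c_2+1/\e}\le\psi\le\frac1{c_1+1/\e}\}$ is a compact neighborhood of $A$ for all $0<c_1<c_2$.

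The step I expect to be the main obstacle is checking that $\rho$ is $(n-s)$-convex on $\Omega$, since $\psi$ is only a smooth, not a holomorphic, modification of $\phi$. Near $A$, by (the chart version of) Proposition \ref{signature}, the Levi form of the model $\frac1{\phi_\alpha}$ has at least $s+1$ positive eigenvalues, of which at least $s$ are of size comparable to $|w|^{-2}$ (coming from the curvature and lying in the horizontal directions) and one is of size comparable to $|w|^{-4}$ (the radial direction), with these lower bounds uniform over the finitely many charts. Writing $\psi=\phi_\alpha+e$ with $e$ vanishing to order $3$ along $A$, one finds that the Levi form of $\frac1\psi$ differs from that of $\frac1{\phi_\alpha}$ by a Hermitian form whose restriction to the horizontal directions is $O(|w|^{-1})$ and whose coupling to the radial and fiber directions shifts the horizontal spectrum by only $O(1)$ (a Schur-complement estimate, using that those directions carry eigenvalues of size $|w|^{-4}$); hence for $\e$ small enough the $s+1$ positive eigenvalues of the model persist for $\rho$ throughout $\{0<\psi<\e\}$. (For the positive case this construction goes back to Schneider \cite{Schneider}.)

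With $V=\{\psi<\e\}$, $\Omega$, $\rho$ and $r=s+1$ fixed, condition (b) of Theorem \ref{Main1} holds iff $s+1\ge\dim D+1$, i.e.\ $\dim D\le s$. Given $f_0\colon\bar D\to X$ with $f_0(bD)\subset V\bs A=\Omega$, I would apply Theorem \ref{Main1}, taking the exceptional set $\sigma$ to consist of $z_0$ together with the set $f_0^{-1}(A)$ --- which is a compact analytic subset of the Stein domain $D$, hence finite --- and taking the interpolation order $k$ large. This yields a holomorphic map $f\colon D\to X$ that approximates $f_0$ uniformly on compacts in $D$, agrees with $f_0$ at $z_0$ and to order $k$ on $f_0^{-1}(A)$, and satisfies $\rho(f(z))\to+\infty$ as $z\to bD$; the last property means $\psi(f(z))\to 0$, i.e.\ the cluster set of $f$ at $bD$ lies in $A$. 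Exactly as in the proof of Theorem \ref{total-space}, agreement to sufficiently high order on the finite set $f_0^{-1}(A)$ forces $f^{-1}(A)=f_0^{-1}(A)$ near those points, while $f(z)\in\Omega$ for $z$ near $bD$ excludes any further points of $f^{-1}(A)$; and if $\dim D>r$ then both sets are empty, since $D$ is Stein and would otherwise contain a nonempty compact analytic subset of dimension $\ge\dim D-r\ge 1$. Finally, if $N_{A|X}$ is positive its signature is $(m,0)$ with $m=\dim A$, and the hypothesis reads $\dim D\le\dim A$.
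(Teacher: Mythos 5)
Your proposal reaches the same conclusion as the paper, but it constructs the required $q$-convex exhaustion function by a genuinely different route, so a comparison is worthwhile.

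The paper relies on Schneider's construction \cite{Schneider}: in the hypersurface case one extends the given signature-$(s,t)$ metric on $N_{A|X}$ to a Hermitian metric $h$ on the line bundle of the divisor $A$ with canonical section $\sigma$, and Schneider's modified metric shows directly that $\rho = 1/\|\sigma\|_h^2$ is $q$-convex near $A$; the paper only has to re-examine the construction to observe that $\rho$ is also noncritical. The higher-codimension case is reduced to the hypersurface case by blowing up $X$ along $A$ (under which the normal bundle's signature becomes $(s,\,t+r-1)$). You instead patch the local models $\phi_\alpha(z,w)=\langle H_\alpha(z)w,w\rangle$ by a partition of unity into a global $\psi$ and set $\rho=\tfrac1\psi-\tfrac1\e$. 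This works in all codimensions at once and avoids the blow-up, but the price is that $\psi$ is no longer the squared norm of a section for any bundle metric, so you cannot quote Proposition \ref{signature}; you have to argue perturbatively. Your verification that the $2$-jets of the $\phi_\alpha$ agree along $A$ is correct and is exactly what makes the patching viable. The Schur-complement step is in substance right but the quoted bound is off: the shift in the effective horizontal block is $O(|w|^{-1})$ rather than $O(1)$ (already the cross term $\Delta B\, C^{-1} B^*$ with $\Delta B=O(|w|^{-2})$, $C^{-1}=O(|w|^{4})$, $B=O(|w|^{-3})$ gives $O(|w|^{-1})$). This is still $o(|w|^{-2})$, so it is negligible against the horizontal eigenvalues $\sim |w|^{-2}$ and your conclusion stands, but the estimate should be stated more carefully --- including the point that compactness of $\P(N_{A|X})$ gives a uniform lower bound $c|w|^{-2}$ on the $s$ positive horizontal eigenvalues. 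After that, both proofs invoke Theorem \ref{Main1} in the same way with $\Omega=V\bs A$, and the treatment of $f^{-1}(A)=f_0^{-1}(A)$, $f(z_0)=f_0(z_0)$ via the exceptional set $\sigma=\{z_0\}\cup f_0^{-1}(A)$ is the same.
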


\begin{proof} 
Schneider proved in \cite{Schneider} that there exist a 
neighborhood $V\subset X$ of $A$ and a smooth
function $\rho\colon V\bs A \to \R$ that tends to $+\infty$ 
at $A$, and whose Levi form $\cL_\rho$ has at least 
$s+1$ positive eigenvalues at every point of $V\bs A$. 
We recall Schneider's construction to see that his function
is also noncritical on a deleted tubular neighborhood of $A$.
Once this is clear, it remains to apply Theorem \ref{Main1}.

Assume first that $A$ is a smooth complex hypersurface in $X$.
Let $E\to X$ denote the hyperplane section bundle of the divisor 
determined by $A$. Then $E|_A\simeq N_{A|X}$, and there is a holomorphic section
$\sigma \colon X\to E$ such that $A=\{x\in X\colon \sigma(x)=0\}$.
Such $\sigma$ is given by a collection $(g_i)$ of holomorphic 
functions $g_i\colon U_i\to\C$ on an open covering $\{U_i\}$ of $X$ 
such that $\{g_i=0\}=A\cap U_i$ and
$dg_i\ne 0$ on $A\cap U_i$. The associated 1-cocycle 
$g_{ij}=\frac{g_i}{g_j}$ defines the line bundle $E\to X$.

The Hermitian metric of signature $(s,t)$ on the normal bundle 
$E|_A= N_{A|X}$ extends to a Hermitian metric $h$ on $E$. 
On $E|_{U_i}\simeq U_i\times \C$ the metric is given  
by a positive function $h_i \colon U_i \to (0,\infty)$.
Let $||\sigma||_h^2 \colon X\to[0,\infty)$ be the squared length 
of the section $\sigma\colon X\to E$.
(On $U_i$ this equals $h_i|g_i|^2$.) Schneider showed that for 
a sufficiently large constant $C>0$ the metric $\phi$ on $E$, 
defined over $U_i$ by $\phi_i=\frac{h_i}{1+Ch_i|g_i|^2}$,
has signature $(s+1,t)$ over a neighborhood of $A$
(see \cite[p.\ 225]{Schneider}). 
Set $g=||\sigma||^2_\phi \colon X\to [0,\infty)$,  so 
\[
	g|_{U_i} = \phi_i |g_i|^2 = \frac{h_i|g_i|^2}{1+Ch_i|g_i|^2}
	=\frac{||\sigma||_h^2}{1+C||\sigma||_h^2}.
\]
It follows that
\[
	-i\di\dibar \log g|_{U_i} =  -i\di\dibar \log \phi_i  
\]
and hence the Levi form of $-\log g= -\log ||\sigma||^2_\phi$ 
has at least $s+1$ positive eigenvalues in a deleted neighborhood 
of $A$ in $M$. Clearly the same holds for 
$e^{-\log g}=\frac{1}{g}= \frac{1+C||\sigma||_h^2}{||\sigma||_h^2}$
and hence for $\rho = \frac{1}{||\sigma||_h^2}$. The latter function is 
noncritical near $A$ and it blows up along $A$.

The general case reduces to the hypersurface case by 
blowing up $X$ along $A$ \cite[\S 3]{Schneider}. 
Assume that $A$ has complex dimension $m$ and codimension $r$ in $X$.
Let $\hat A=\P(N)$ denote the total space of the fiber bundle over $A$ whose fiber
over a point $x \in A$ is $\P(N_x)\simeq \P^{r-1}$, 
the projective space of complex lines in 
$N_x\simeq\C^r$. Replacing $A$ by $\hat A$ changes $X$ 
to a new manifold $\hat X$ such that $\hat X\bs \hat A$ is biholomorphic 
to $X\bs A$, and $\hat A$ is a smooth complex hypersurface in $\hat X$.
The restriction of the normal bundle $N_{\hat A|\hat X}$ 
to the submanifold $\P(N_x)\subset \hat A$ 
is the universal bundle over $\P(N_x)\simeq \P^{r-1}$ 
(the inverse of the hyperplane section bundle).
This bundle is negative with respect to the 
Fubini-Study metric on $\P^{r-1}$, and a simple calculation shows 
that $N_{\hat A|\hat X}$ has signature $(s,t+r-1)$ if
$N_{A|X}$ has signature $(s,t)$. It remains to apply the previous argument 
(in the hypersurface case) to a deleted neighborhood 
of $\hat A$ in $\hat X$ (that is the same as 
a deleted neighborhood of $A$ in $X$).
\end{proof}

Assume now that $X$ is a complex manifold and $A$  
is a compact complex submanifold of $X$ with positive normal bundle 
$N_{A|X}$. Let $D$ be a relatively compact, 
smoothly bounded, strongly pseudoconvex domain 
in a Stein manifold, with $\dim D \le \dim A$.
Given a pair of points $z_0\in D$ and $x_0 \in X\bs A$, 
it is a natural question whether there exists a proper holomorphic map 
$f\colon D\to X\bs A$ such that $f(z_0)=x_0$.
Theorem \ref{complements} gives an affirmative answer
if $x_0$ lies sufficiently close to $A$ (take $f_0\colon \bar D\to X$ 
to be the constant map $f(z)=x_0$.) 

The answer to this question is negative in general. 
For example, if $X$ is obtained by
blowing up $\P^3$ at one point, with $\Sigma$ being the exceptional
divisor, and if $A\subset X$ is a complex hyperplane disjoint from $\Sigma$,
then for any 2-dimensional domain $z_0\in D\subset \C^2$ 
and holomorphic map $f\colon D\to X\bs A$ with $f(z_0)\in \Sigma$
the set $f^{-1}(\Sigma)$ is a positive dimensional subvariety
of $D$ which accumulates on $bD$; hence $f$ cannot be
proper into $X\bs A$. (We wish to thank the referee 
for pointing out this example.)

The situation is different when a connected topological group acts
transitively on $X$ by holomorphic automorphisms:

\begin{theorem}
\label{homogeneous}
Assume that $X$ is a complex manifold and $A$  
is a compact complex submanifold of $X$ with positive normal bundle 
$N_{A|X}$. Let $D$ be a relatively compact, smoothly bounded, 
strongly pseudoconvex domain in a Stein manifold, 
with $\dim D \le \dim A$ and $\dim A + \dim D < \dim X$.
Assume that a connected topological group $G$ acts transitively 
on $X$ by holomorphic automorphisms of $X$.
Given points $z_0\in D$ and $x_0\in X\bs A$, 
there exists a proper holomorphic map $f\colon D \to X\bs A$ 
such that $f(z_0)=x_0$.
\end{theorem}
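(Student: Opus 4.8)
The plan is to deduce the statement from Theorem \ref{complements} once a suitable \emph{initial map} has been produced, the construction of that map being the place where transitivity of $G$ enters. Two reductions first. Since $\dim D\ge 1$, the hypothesis $\dim A+\dim D<\dim X$ forces $\dim A\le\dim X-2$, so $A$ has complex codimension $\ge 2$ and $X\setminus A$ is connected. Let $V\subset X$ be the tubular neighborhood of $A$ provided by Theorem \ref{complements}; since $N_{A|X}$ is positive the relevant index there is $s=\dim A$, and $\dim D\le s$ is our hypothesis. It then suffices to construct a continuous map $f_0\colon\bar D\to X$, holomorphic in $D$, with $f_0(z_0)=x_0$, $f_0(bD)\subset V\setminus A$ and $f_0(\bar D)\cap A=\emptyset$: applying Theorem \ref{complements} to such an $f_0$ yields a holomorphic $f\colon D\to X$ with $f^{-1}(A)=f_0^{-1}(A)=\emptyset$, with $f(z_0)=f_0(z_0)=x_0$, and with cluster set at $bD$ contained in $A$, i.e.\ a proper holomorphic map $D\to X\setminus A$ carrying $z_0$ to $x_0$. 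If $x_0\in V\setminus A$ one simply takes $f_0\equiv x_0$, so the content is the case $x_0\notin V$.

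To build $f_0$ I would bootstrap from Theorem \ref{complements} itself. Applied to the constant map $\equiv y_0$ for some $y_0\in V\setminus A$ it produces a proper holomorphic $h\colon D\to X\setminus A$ with $h(z_0)=y_0$ whose values tend to $A$ at $bD$; restricting to a slightly smaller strongly pseudoconvex domain $D'\Subset D$ containing $z_0$ gives a \emph{non‑constant} map $h|_{\bar D'}$, holomorphic in $D'$, continuous on $\bar D'$, with $h(bD')\subset V\setminus A$ and $h(\bar D')\cap A=\emptyset$, but with the ``wrong'' value $y_0$ at $z_0$. Now use the transitive action: choose $g\in G$ with $g(y_0)=x_0$, and, using connectedness of $G$, a factorization $g=u_N\cdots u_1$ into factors lying in an arbitrarily small neighborhood of $e$. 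Applying the $u_j$ one at a time and, after each, re‑running the noncritical lifting procedure of \S\ref{proof} with Schneider's function for $A$ to push the boundary back down into $V\setminus A$, one moves the value at $z_0$ along the sequence $y_0,\,u_1y_0,\,u_2u_1y_0,\dots,\,g(y_0)=x_0$, keeping the boundary inside $V\setminus A$ and — this is where $\dim A+\dim D<\dim X$ is used — keeping the at most $\dim D$‑dimensional image disjoint from the compact set $A$ at every stage by general position. After $N$ such double steps one obtains the desired $f_0$ on $\bar D'$ (relabelled), and Theorem \ref{complements} concludes the proof.

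The step I expect to be the genuine obstacle is controlling the boundary throughout this alternation: each automorphism $u_j$, however small, perturbs the boundary, which sits very near $A$, so one must bookkeep the levels of Schneider's function carefully and show that a bounded number of re‑liftings keeps the boundary in $V\setminus A$ and off $A$ — this is exactly the point where transitivity and connectedness of $G$ (to supply the small factors $u_j$), compactness of $A$ (for uniform estimates), and the codimension hypothesis $\dim A+\dim D<\dim X$ (to keep the image off $A$) are all needed simultaneously. Everything else — the reductions via Theorem \ref{complements} and the initial application producing $h$ — is routine.
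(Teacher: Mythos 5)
Your overall strategy coincides with the paper's: factor the group element $g\in G$ carrying a chosen point near $A$ to $x_0$ into small factors in $G$, alternate applying these automorphisms with re-running the boundary-lifting construction (pushing $f(bD)$ back into a fixed neighborhood of $A$ after each perturbation), use general position together with $\dim A+\dim D<\dim X$ to keep the image off $A$, and finish by invoking the properness statement. The bookkeeping you flag as "the genuine obstacle" is handled in the paper simply by fixing $V_0\Subset V$ and a neighborhood $U$ of $e\in G$ with $g(V_0)\subset V$ for all $g\in U$, then always pushing the boundary back into $V_0$ via Lemma \ref{bigstep} (which works on continuous maps $\bar D\to X$ holomorphic in $D$), so the alternation is well posed.

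One step in your writeup should be removed. The preliminary application of Theorem \ref{complements} to a constant map to produce a proper $h\colon D\to X\setminus A$, followed by restriction to a smaller domain $D'\Subset D$, is unnecessary: a constant map $f_0\equiv y_0$ with $y_0\in V_0\setminus A$ already serves as the starting point for the alternation, directly on $\bar D$. More importantly, the $D'$ detour introduces a real defect: after "relabelling," the final proper map would have domain $D'$, not the given $D$, which does not prove the theorem as stated. The paper avoids this by never leaving the original $D$ — start with the constant map on $\bar D$, run the finitely many automorphism/lifting steps with exceptional set $\sigma=\{z_0\}$ to preserve the value at $z_0$, and apply Theorem \ref{Main1} at the end.
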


\begin{proof}
The idea is taken from \cite[proof of Theorem 1]{FGl}. 
Let $s={\rm dim}A$. By the proof of Theorem \ref{complements} there 
exist a neighborhood $V\subset X$ of $A$ and a noncritical smooth
function $\rho\colon V\bs A \to \R$ that tends to $+\infty$ 
at $A$ and whose Levi form has at least $s+1$ positive eigenvalues 
on $V\bs A$.

Choose an open subset $V_0\subset X$ such that $A\subset V_0\Subset V$.
There is an open neighborhood $U$ of the identity in $G$ such that 
$g(V_0) \subset V$ for each $g\in U$. 

Choose a continuous map $f_0\colon \bar D\to X\bs A$ 
that is holomorphic in $D$ and such that $f_0(bD)\subset V_0$.
($f_0$ may be a constant map.) Choose $g\in G$ such that
$g f_0(z_0)=x_0$. As $G$ is connected, there are elements 
$g_1,\ldots,g_k\in U \subset G$ such that 
$g=g_k g_{k-1}\cdots g_1$ (product in $G$). 

The map $g_1 f_0$ is continuous on $\bar D$, holomorphic in $D$, 
and $g_1 f_0(bD)\subset V$. By general position we may assume
that its range does not intersect $A$.
Lemma \ref{bigstep} furnishes a continuous map $f_1\colon \bar D\to X\bs A$ 
that is holomorphic in $D$ such that $f_1(bD)\subset V_0$ and
$f_1(z_0)=g_1 f_0(z_0)$. (Lemma \ref{bigstep} also 
applies to individual maps in view of Lemma \ref{sprays-exist}.)

The map $g_2 f_1$ is continuous on $\bar D$, holomorphic in $D$, 
and $g_2 f_1(bD)\subset V$. By general position we 
may assume that its range does not intersect $A$. Lemma \ref{bigstep} furnishes 
a map $f_2\colon \bar D\to X\bs A$ that is holomorphic in $D$ 
such that $f_2(bD)\subset V_0$ and $f_2(z_0)=g_2 f_1(z_0)= g_2 g_1 f_0(z_0)$.

After $k$ such steps we obtain a continuous map 
$f_k\colon \bar D\to X\bs A$ that is holomorphic in $D$ and satisfies   
\[
	f_k(bD)\subset V_0,\quad 
	f_k(z_0)=g_k\cdots g_1 f_0(z_0)=x_0. 
\]
We now apply Theorem \ref{Main1} to $f_k$, with $\Omega= V_0\bs A$ 
and $\rho$ as above, 
to obtain a holomorphic map $f \colon D\to X\bs A$ 
such that $f(z_0)=x_0$ and the cluster set of $f$ at $bD$
belongs to $A$. Hence $f$ is a proper map of $D$ to $X\bs A$.  
\end{proof}

Applying Theorem \ref{homogeneous} with $X=\P^n$
and taking into account that every closed complex
submanifold of $\P^n$ has positive normal bundle
(see Barth \cite{Barth}) gives the following corollary. 
(Compare with Corollary \ref{proj-minus}.)

\begin{corollary}
\label{projective}
Let $D$ be a relatively compact, smoothly bounded, 
strongly pseudoconvex domain in a Stein manifold,
and let $A$ be a closed complex submanifold of a projective space $\P^n$
such that $\dim D\le \dim A$ and $\dim A + \dim D < n$.  
For every pair of points $z_0\in D$ and $x_0\in \P^n \bs A$ there exists
a proper holomorphic map $f\colon D \to \P^n\bs A$ with $f(z_0)=x_0$.
\end{corollary}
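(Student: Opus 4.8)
The plan is to derive Corollary~\ref{projective} as a direct application of Theorem~\ref{homogeneous}, so the work is almost entirely a matter of verifying the hypotheses of that theorem. First I would record that $X=\P^n$ is a complex manifold and that $A\subset \P^n$ is a closed complex submanifold; since $\P^n$ is compact, $A$ is automatically compact. Next I would invoke Barth's theorem (see \cite{Barth}, already cited in the excerpt) to conclude that the normal bundle $N_{A|\P^n}$ is positive. The domain $D$ is, by hypothesis, a relatively compact, smoothly bounded, strongly pseudoconvex domain in a Stein manifold, which is exactly the standing assumption on $D$ in Theorem~\ref{homogeneous}, and the two numerical hypotheses $\dim D\le \dim A$ and $\dim A+\dim D<n=\dim \P^n$ are precisely the ones assumed in the corollary.

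The remaining ingredient is the group action. Here I would take $G=\mathrm{PGL}(n+1,\C)$, the group of holomorphic automorphisms of $\P^n$, which acts transitively on $\P^n$ and is connected (it is the quotient of the connected group $\mathrm{GL}(n+1,\C)$). This supplies the hypothesis that a connected topological group acts transitively on $X$ by holomorphic automorphisms. With all hypotheses of Theorem~\ref{homogeneous} in place, given any pair of points $z_0\in D$ and $x_0\in \P^n\bs A$, the theorem furnishes a proper holomorphic map $f\colon D\to \P^n\bs A$ with $f(z_0)=x_0$, which is exactly the assertion of Corollary~\ref{projective}.

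There is essentially no obstacle: the corollary is a specialization of Theorem~\ref{homogeneous}, and every hypothesis transfers immediately. The only point requiring a citation rather than a computation is the positivity of the normal bundle of a projective submanifold, which is Barth's result; the transitivity and connectedness of $\mathrm{PGL}(n+1,\C)$ are standard. One might remark, for context, that the conclusion fails without the dimension restrictions (by the Remmert--Stein argument indicated after Corollary~\ref{proj-minus}), but this is not needed for the proof itself.

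\begin{proof}
The projective space $\P^n$ is a complex manifold on which the connected group $G=\mathrm{PGL}(n+1,\C)$ of holomorphic automorphisms acts transitively. A closed complex submanifold $A\subset \P^n$ is compact, and by a theorem of Barth \cite{Barth} its normal bundle $N_{A|\P^n}$ is positive. Under the hypotheses $\dim D\le \dim A$ and $\dim A+\dim D<n$, Theorem~\ref{homogeneous} (applied with $X=\P^n$ and this choice of $G$) yields, for every pair of points $z_0\in D$ and $x_0\in\P^n\bs A$, a proper holomorphic map $f\colon D\to\P^n\bs A$ with $f(z_0)=x_0$.
\end{proof}
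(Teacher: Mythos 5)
Your proof matches the paper's own derivation exactly: the paper also obtains Corollary~\ref{projective} by specializing Theorem~\ref{homogeneous} to $X=\P^n$, citing Barth's theorem for the positivity of the normal bundle of a closed complex submanifold of $\P^n$, with the transitive holomorphic action of the (connected) automorphism group of $\P^n$ supplying the remaining hypothesis. Your spelling-out of the choice $G=\mathrm{PGL}(n+1,\C)$ is the only detail the paper leaves implicit; everything else is identical.
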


%
%
%  THANKS, THANKS AND THANKS
%
%
%

\medskip
\textit{Acknowledgement.}
We wish to thank  the referee for very helpful remarks
and suggestions which helped us to improve the presentation.

\bibliographystyle{amsplain}

\end{document}